\def\EE{{\mathcal E}}
\def\HH{{\mathcal H}}
\newcommand{\bx}{{\bf x}}
\newcommand{\der}{\delta}
\newcommand{\dom}{\mbox{Dom}}
\newcommand{\hro}{\hat \rho}
\newcommand{\hsi}{\hat \sigma}
\newcommand{\hz}{\hat z}
\newcommand{\id}{\mbox{Id}}
\newcommand{\iot}{\int_{0}^{t}}
\newcommand{\ist}{\int_{s}^{t}}
\newcommand{\norm}[1]{\lVert #1\rVert}
\newcommand{\ott}{[0,T]}
\newcommand{\ro}{[-r,0]}
\newcommand{\xd}{{\bf x^{2}}}
\newcommand{\bd}{{\bf B^{2}}}
\newcommand{\xdst}{{\bf x}_{st}^{\bf 2}}
\newcommand{\bdst}{{\bf B}_{st}^{\bf 2}}
\newcommand{\bdot}{{\bf B}_{0,t-s}^{\bf 2}}
\newcommand{\xdt}{{\bf \tilde{x}^{2}}}
\newcommand{\1}{{\bf 1}}
\newcommand{\E}{{\rm E}}
\def\lpa{\langle}
\def\rpa{\rangle}
\def\sk{{\mathbb{D}}}
\newcommand{\R}{\mathbb R}
\newcommand{\N}{\mathbb N}
\newcommand{\cal}{\mathcal}
\newcommand{\cb}{\mathcal B}
\newcommand{\cac}{\mathcal C}
\newcommand{\cd}{\mathcal D}
\newcommand{\cf}{\mathcal F}
\newcommand{\cj}{\mathcal J}
\newcommand{\cn}{\mathcal N}
\newcommand{\cq}{\mathcal Q}
\newcommand{\cz}{\mathcal Z}
\newcommand{\al}{\alpha}
\newcommand{\ep}{\varepsilon}
\newcommand{\e}{\varepsilon}
\newcommand{\ga}{\gamma}
\newcommand{\ka}{\kappa}
\newcommand{\laa}{\Lambda}
\newcommand{\si}{\sigma}
\newcommand{\vp}{\varphi}
\newcommand{\lp}{\left(}
\newcommand{\rp}{\right)}
\newcommand{\lc}{\left[}
\newcommand{\rc}{\right]}
\newcommand{\lcl}{\left\{}
\newcommand{\rcl}{\right\}}
\newcommand{\fin}
{ \vspace{-0.6cm}
\begin{flushright}
\mbox{$\square$}
\end{flushright}
\noindent }
\newtheorem{theorem}{Theorem}[section]
\newtheorem{corollary}[theorem]{Corollary}
\newtheorem{definition}[theorem]{Definition}
\newtheorem{hypothesis}[theorem]{Hypothesis}
\newtheorem{lemma}[theorem]{Lemma}
\newtheorem{proposition}[theorem]{Proposition}
\theoremstyle{remark}
\begin{document}

\title{Delay equations driven by rough paths}
\author{A. Neuenkirch, I. Nourdin  \and  S. Tindel}
\begin{abstract}

In this article, we illustrate the flexibility of the
algebraic integration formalism introduced in
{\it M. Gubinelli (2004), Controlling Rough Paths, J. Funct. Anal. {\bf 216}, 86-140,} by
 establishing an
existence and uniqueness result for delay equations driven by rough paths.
We then apply our results to the case where the driving path is
a fractional Brownian motion with Hurst parameter $H>\frac13$.
\end{abstract}

\keywords{rough paths theory; delay equation; fractional Brownian motion; Malliavin calculus.}

\subjclass[2000]{60H05, 60H07, 60G15}

\address{
{\it A. Neuenkirch:}
{\rm Johann Wolfgang Goethe-Universit{\"a}t Frankfurt,
FB 12 Institut  f\"ur Mathematik,
Robert-Mayer-Strasse 10, 60325 Frankfurt am Main, Germany}.
{\it Email: }{\tt neuenkir@math.uni-frankfurt.de}. {\it Supported by the
DFG-project "Pathwise Numerics and Dynamics for Stochastic Evolution Equations".}
\newline
$\mbox{ }$\hspace{0.1cm}
{\it Ivan Nourdin:}
{\rm Laboratoire de Probabilit\'es et Mod\`eles Al\'eatoires,
Universit{\'e} Pierre et Marie Curie,
Bo{\^\i}te courrier 188, 4 Place Jussieu, 75252 Paris Cedex 5, France}.
{\it Email: }{\tt nourdin@ccr.jussieu.fr}
\newline
$\mbox{ }$\hspace{0.1cm}
{\it Samy Tindel:}
{\rm Institut {\'E}lie Cartan Nancy, Universit\'e de Nancy 1, B.P. 239,
54506 Vand{\oe}uvre-l{\`e}s-Nancy Cedex, France}.
{\it Email: }{\tt tindel@iecn.u-nancy.fr}
}

\maketitle

\section{Introduction}

In the last years, great efforts have been made to develop
a stochastic calculus for fractional Brownian motion. The first results
gave a rigorous theory for  the stochastic
integration  with respect to fractional Brownian motion and  established
a corresponding  It\^o formula, see e.g.
\cite{ALN, al,cc,  LD, nual-cours}. Thereafter, stochastic
differential equations driven by fractional Brownian motion have been
considered. Here different approaches can be used depending on   the
dimension of the equation and the Hurst parameter of the driving
fractional Brownian motion.
In the one-dimensional case \cite{nourdin-simon}, existence and
uniqueness of the solution can be derived
by a regularization procedure introduced in \cite{RV}.
The case of a multi-dimensional driving fractional Brownian motion   can
be treated  by means of fractional calculus tools, see e.g. \cite{NR,ZA} or by
means of the Young integral \cite{Lyons}, when the  Hurst coefficient  satisfies $H>\frac12$.
However,  only the rough paths theory \cite{Lyons,LyonsBook}
and its application to fractional Brownian motion \cite{CQ} allow  to solve fractional {\small SDE}s  in any dimension for a Hurst parameter $H>\frac14$.  The original
rough paths theory developed by T. Lyons relies on deeply involved  algebraical and analytical tools. Therefore
 some alternative methods \cite{FP,Gu} have been developed recently,
 trying  to catch the essential
results of \cite{LyonsBook} with less theoretical apparatus.

\vspace{0.3cm}

Since  it is
based on some rather simple algebraic considerations and
an extension of Young's integral,  the  method given in  \cite{Gu}, which we call {\it algebraic integration} in the sequel,
has been especially attractive to us. Indeed, we think
that  the basic properties of fractional differential systems can be
studied in a  natural and nice way using algebraic integration.
(See also  \cite{NNRT}, where this approach is used to study the law
of the solution of a fractional {\small SDE}.)
In the present article, we will illustrate the flexibility of the
algebraic integration formalism by studying fractional equations {\it with delay}.
More specifically, we will consider the following  equation:
\begin{equation}
\label{eq:intro-delay-b}
\left\{
\begin{array}{ll}
X_t=\xi_0+  \int_0^t \sigma(X_{s}, X_{s-r_1}, \ldots, X_{s-r_k})  dB_s
+ \int_0^t b    (X_{s}, X_{s-r_1}, \ldots,  X_{s-r_k})
ds, \quad  t\in\ott,    \\
X_t=\xi_t,  \qquad  t \in [-r_k,0].
\end{array}
\right.
\end{equation}
Here the discrete delays satisfy $0<r_1< \ldots < r_k< \infty$, the initial condition $\xi$ is a
function from $[-r_k,0]$ to $\R^n$, the functions
$\si:\R^{n,k+1} \to \R^{n,d}$, $b:\R^{n,k+1}\to \R^{n}$ are regular, and
$B$ is a $d$-dimensional fractional Brownian motion with Hurst parameter $H>\frac13$. The stochastic integral in
equation (\ref{eq:intro-delay-b}) is  a generalized Stratonovich
integral,  which will be explained in detail in Section \ref{sec:algebraic-intg}. Actually,
in equations like (\ref{eq:intro-delay-b}), the drift term $\int_0^t b    (X_{s}, X_{s-r_1}, \ldots,  X_{s-r_k})
ds$ is usually harmless, but causes some cumbersome notations. Thus, for sake of
simplicity, we will rather deal in the sequel with delay equations of the type
\begin{equation}
\label{eq:intro-delay}
\left\{
\begin{array}{ll}
X_t=\xi_0+  \int_0^t \sigma(X_{s}, X_{s-r_1}, \ldots, X_{s-r_k})  dB_s,
\quad  t\in\ott,   \\
X_t=\xi_t,  \qquad  t \in [-r_k,0].
\end{array}
\right.
\end{equation}
Our main result will be as follows:

\medskip

\begin{theorem}
Let  $\xi \in C^1([-r_k,0]; \R^{n})$,
$\si\in C_{b}^{3}(\R^{n,k+1}; \R^{n,d})$,
and let $B$ be  a $d$-di\-men\-sio\-nal fractional Brownian motion with Hurst
parameter $H>\frac13$.
Then equation (\ref{eq:intro-delay}) admits a unique solution on $\ott$ in
the class of controlled processes (see Definition \ref{def:ccp}.)
\end{theorem}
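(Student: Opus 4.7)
The strategy is to exploit the cascade structure of the delay equation. On each interval $[jr_1, (j+1)r_1]$ for $j=0,1,\dots$, the delayed arguments $X_{s-r_i}$ for $s$ in that interval take values in $[jr_1-r_i,(j+1)r_1-r_i] \subset [-r_k, jr_1]$; hence they come either from the $C^1$ initial condition $\xi$ (when $j=0$) or from previously constructed pieces of the solution (when $j \ge 1$). In either case, at the time one treats the $j$-th interval these delayed paths are already \emph{known} controlled paths, so the delay equation reduces to a standard rough differential equation whose coefficient depends on time through known data.

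\medskip

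On each interval $[jr_1,(j+1)r_1]$, and more precisely on a sub-interval $[jr_1, jr_1+T_0]$ of length $T_0$ to be chosen, I would run a Picard iteration. Let $\mathcal{Q}$ denote the Banach space of $\R^n$-valued controlled paths (Definition \ref{def:ccp}) equipped with its natural seminorm, and set
\[
\Gamma(X)_t = X_{jr_1} + \int_{jr_1}^{t} \sigma(X_s, X_{s-r_1}, \ldots, X_{s-r_k})\, dB_s,
\]
where the integral is the generalized Stratonovich integral of Section \ref{sec:algebraic-intg}. The two structural tools needed are a composition lemma --- if $\sigma \in C_b^3$ and each entry is a controlled path, then $s \mapsto \sigma(X_s, X_{s-r_1}, \ldots, X_{s-r_k})$ is again a controlled path, with Gubinelli derivative given by the chain rule --- and the continuity of the rough integral as a map from controlled integrands to controlled integrals. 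Both are standard outputs of the algebraic integration formalism.

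\medskip

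The decisive simplification is that, over the short window $[jr_1, jr_1+T_0]$, the delayed entries $X_{s-r_i}$ depend only on data outside this window; they carry no Gubinelli derivative relative to $B$ on the current window and enter the coefficient only as a smooth time-inhomogeneity. The Gubinelli derivative of the integrand therefore reduces to $\partial_0 \sigma(X_s, X_{s-r_1}, \ldots, X_{s-r_k})$ times that of $X_s$, which is exactly the structure needed to close classical contraction estimates for rough differential equations. For $T_0$ small enough, depending on $\|\sigma\|_{C_b^3}$, the H\"older norm of $B$, and its L\'evy area, $\Gamma$ maps a suitable ball of $\mathcal{Q}$ into itself and is a contraction, yielding a unique local solution. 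The main technical obstacle is to show that $T_0$ can be chosen uniformly in $j$: the controlled-path data inherited from the previous stage must enter the estimate only through the starting value $X_{jr_1}$ and not through the entire inherited controlled-path norm, so that the window length does not shrink as $j$ grows. Once this uniformity is in hand, finitely many iterations cover each $[jr_1,(j+1)r_1]$, then a finite cascade in $j$ covers $[0,T]$; uniqueness on $[0,T]$ follows from local uniqueness of the fixed point at each stage.
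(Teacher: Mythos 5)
There is a genuine gap at the heart of your argument: the claim that on the current window the delayed entries $X_{s-r_i}$ ``carry no Gubinelli derivative relative to $B$'' and ``enter the coefficient only as a smooth time-inhomogeneity'' is false in the rough regime $\frac13<H\le\frac12$. Although these paths are indeed determined by previously constructed data, they are not smooth in time: they are themselves controlled paths based on the \emph{shifted} driver, so they are only $\gamma$-H\"older with $\gamma<\frac12$. Writing out the increment of the integrand $m_s=\sigma(X_s,X_{s-r_1},\ldots,X_{s-r_k})$ gives
\begin{equation*}
(\delta m)_{st}=\partial_0\sigma\cdot\zeta_s\,(\delta B)_{st}
+\sum_{i=1}^{k}\partial_i\sigma\cdot\zeta_{s-r_i}\,(\delta B)_{s-r_i,t-r_i}+O(|t-s|^{2\gamma}),
\end{equation*}
and the middle terms are of order $|t-s|^{\gamma}$ only; they can neither be absorbed into a $2\gamma$-H\"older remainder nor be rewritten as a density times $(\delta B)_{st}$, since they involve increments of the \emph{shifted} path. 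Hence $m$ is not a classical controlled path on the current window, and the standard composition-plus-rough-integration machinery you invoke does not apply. Defining $\int m\,dB$ forces you to give a meaning to the cross-iterated integrals $\int_s^t (B_{u-r_i}-B_{s-r_i})\otimes dB_u$; this is exactly the \emph{delayed L\'evy area} $\xd(-r_i)$ of Hypothesis \ref{hyp:x}, and the paper's whole point is to enlarge the class of integrands to delayed controlled paths (Definition \ref{def:dcp}) and to prove a stability/integration result (Propositions \ref{compo:ccp-phi} and \ref{intg:mdx}) for them. Your proposal contains no substitute for this ingredient, and without it the contraction estimates cannot even be formulated for $H\le\frac12$. (Your argument would be fine in the Young regime $H>\frac12$, where a $\gamma$-H\"older integrand suffices.)

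A second, related omission: the theorem is stated for fractional Brownian motion with $H>\frac13$, so one must actually \emph{construct} the required second-order data for this driver. The ordinary L\'evy area of $B$ is not enough; one needs the delayed areas $\bdst(-r_i)=\int_s^t(B^j_{u-r_i}-B^j_{s-r_i})\,d^{\circ}B^i_u$ and the estimate $\E|\bdst(v)(i,j)|^2\le c\,|t-s|^{4H}$ uniformly in $v$, which the paper obtains via Russo--Vallois integrals and Malliavin calculus (Proposition \ref{prop:hyp-fbm}). Finally, your worry about choosing $T_0$ uniformly in $j$ is a non-issue: the paper lets the window length depend on the norm of the solution already constructed on the preceding stage, and since only finitely many stages $[0,r_1],\ldots,[(N-1)r_1,Nr_1]$ are needed, no uniformity is required.
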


\medskip

Stochastic delay equations driven by  standard Brownian motion have been studied
extensively  (see e.g. \cite{mohammed} and \cite{mohammed2}  for an
overview) and are used in many applications. However, delay equations
driven by fractional Brownian motion have been only considered so far in  \cite{FR},
where the {\it one-dimensional} equation
\begin{equation}\label{easy}
\left\{
\begin{array}{ll}
X_t=\xi_0+  \int_0^t \sigma(X_{s-r})  dB_s
+ \int_0^t b    (X_{s})                  ds, \qquad   t\in\ott,   \\
X_t=\xi_t, \qquad  t \in [-r,0],
\end{array}
\right.
\end{equation}
is studied for $H>\frac12$. Observe that (\ref{easy}) is a particular case of equation (\ref{eq:intro-delay}).

\vspace{0.3cm}

To solve equation (\ref{eq:intro-delay}), one requires two main ingredients in the
algebraic integration setting. First of all, a natural class of paths, in which the
equation can be solved.  Here, this will be the paths whose increments are controlled
by the increments of $B$. Namely, writing $(\der z)_{st}=z_t-z_s$ for the increments
of an arbitrary  function $z$, a stochastic differential equation driven by $B$ should be solved
in the class of paths, whose increments can be decomposed into
$$
z_{t} -z_s = \zeta_s (B_{t} - B_s) +   \rho_{st},
\quad \mbox{ for } \quad 0\le s <t \le T,
$$
with $\zeta$ belonging to $\mathcal{C}_1^\ga$ and $\rho$ belonging to $\mathcal{C}_2^{2\ga}$,
for a given $\gamma\in(\frac13,H)$.
(Here, $\mathcal{C}_i^\mu$ denotes a space of $\mu$-H\"older continuous functions
of $i$ variables, see Section \ref{sec:algebraic-intg}.)
This class of functions will be called the class of
{\it controlled paths} in the sequel.

To solve  fractional differential equations {\it without} delay, the second main tool
would be to define  the integral of a  controlled
path with respect to  fractional Brownian motion and to show that the resulting
process is still a controlled path. To define the integral of a controlled path,
a double iterated integral of fractional Brownian motion,
called the {\it L\'evy area}, will be required.
Once the stability of the class of controlled paths under  integration is established, the differential
equation
is solved by an appropriate fixed point argument.

To solve  fractional delay equations,  we will have to modify this procedure. More specifically, we need a second class of paths, the class of {\it delayed controlled  paths}, whose
increments can be written as
$$
z_{t}-z_s
= \zeta_s^{(0)} ( B_{t}-B_s) +
 \sum_{i=1}^{k}\, \zeta_s^{(i)}\, (B_{t-r_i}-B_{s-r_i})
+ \rho_{st},
\quad\mbox{ for }\quad 0\le s <t \le T,
$$
where, as above, $\zeta^{(i)}$ belongs to $\mathcal{C}_1^\ga$ for $i=0, \ldots, k$,
and $\rho$ belongs to $\mathcal{C}_2^{2\ga}$ for a given $\frac13<\ga<H$. (Note that a classical controlled path is a delayed controlled path with $\zeta^{(i)}=0$ for $i=1, \ldots, k$.)  For such a delayed controlled path we will then define its integral with respect to fractional Brownian motion.
We emphasize the fact that the integral of  a delayed controlled path  is actually a classical controlled path and satisfies a stability property.

To define this integral  we have to  introduce  a {\it delayed L\'evy area}
  $\bd(v)$ of $B$ for $v \in [-r_k,0]$.  This process, with values in the space
of matrices $\R^{d,d}$ will also be defined as an iterated integral: for $1\le i,j\le d$
and $0\le s <t \le T$, we set
$$
\bdst(v)(i,j)=\ist dB_u^i  \int_{s+v}^{u+v} dB_w^j
=\int_s^t (B^j_{u+v}-B^j_{s+v}) d^\circ B^i_u,
$$
where the integral on the right hand side is a
Russo-Vallois integral \cite{RV}.
Finally, the fractional delay equation (\ref{eq:intro-delay}) will be solved by a fixed point argument.

\vspace{0.3cm}

This article is structured as follows:
Throughout the remainder of this  article, we consider the general  delay equation
\begin{equation} \label{dde}
\left\{\begin{array}{rcll}
dy_t&=& \sigma(y_{t}, y_{t-r_1}, \ldots, y_{t-r_k})  dx_t ,  & \qquad t\in\ott,    \\
{} \,\,\, y_t&=&\xi_t, &  \qquad t \in [-r_k,0],
\end{array}\right.
\end{equation}
where $x$ is $\gamma$-H\"older continuous function with $\gamma >\frac13$
 and $\xi$ is a $2\gamma$-H\"older continuous function.
In Section 2 we recall some basic facts of the algebraic integration and in particular the
definition of a classical controlled path, while in Section 3
we introduce the class of delayed controlled paths and the integral
of a delayed controlled path with
respect  to its controlling rough path. Using the stability of the integral, we show the
existence of a unique solution of equation (\ref{dde}) in the class of classical controlled paths
under the assumption of the existence of a delayed L\'evy area. Finally, in Section 4 we
specialize our results to delay equations driven by a fractional
Brownian motion with Hurst parameter $H>\frac13$.


\section{Algebraic integration and rough paths equations}\label{sec:algebraic-intg}
Before we consider equation (\ref{dde}), we recall the  strategy
introduced in  \cite{Gu}  in order to solve an equation without delay, i.e.,
\begin{equation} \label{de_rp}
dy_t = \sigma(y_t)  d x_t, \quad t \in [0,T], \quad  \qquad y_0= \alpha \in \R^n,
\end{equation}
where $x$ is a $\R^d$-valued $\gamma$-H\"older continuous function with
$\gamma > \frac13$.

\subsection{Increments}\label{incr}

Here we present the basic  algebraic structures, which
will allow us to define a pathwise integral with respect to
irregular functions. For  real numbers
$0 \leq a \leq b \leq T < \infty $, a vector space $V$ and an integer $k\ge 1$ we denote by
$\cac_k([a,b]; V)$ the set of functions $g : [a,b]^{k} \to V$ such
that $g_{t_1 \cdots t_{k}} = 0$
whenever $t_i = t_{i+1}$ for some $1 \leq i\le k-1$.
Such a function will be called a
\emph{$(k-1)$-increment}, and we will
set $\cac_*([a,b];V)=\cup_{k\ge 1}\cac_k([a,b];V)$. An important  operator for our purposes
is given by
\begin{equation}
  \label{eq:coboundary}
\delta : \cac_k([a,b];V) \to \cac_{k+1}([a,b];V), \qquad
(\delta g)_{t_1 \cdots t_{k+1}} = \sum_{i=1}^{k+1} (-1)^{k-i}
g_{t_1  \cdots \hat t_i \cdots t_{k+1}} ,
\end{equation}
where $\hat t_i$ means that this argument is omitted.
A fundamental property of $\der$
is that
$\delta \delta = 0$, where $\delta \delta$ is considered as an operator
from $\cac_k([a,b];V)$ to $\cac_{k+2}([a,b];V)$.
 We will denote $\cz\cac_k([a,b];V) = \cac_k( [a,b];V) \cap \text{Ker}\delta$
and $\cb \cac_k([a,b];V) =
\cac_k([a,b];V) \cap \text{Im}\delta$.

\vspace{0.3cm}

Some simple examples of actions of $\der$
 are as follows: For
$g\in\cac_1([a,b];V)$, $h\in\cac_2([a,b];V)$ and $f\in\cac_3([a,b];V)$ we have
\begin{equation*}
  (\der g)_{st} = g_t - g_s,
\quad
(\der h)_{sut} = h_{st}-h_{su}-h_{ut}\quad\mbox{and}\quad
(\der f)_{suvt}=f_{uvt}-f_{svt}+f_{sut}-f_{suv}
\end{equation*}
for any $s,u,v,t\in [a,b]$.
Furthermore, it is easily checked that
$\cz \cac_{k+1}([a,b];V) = \cb \cac_{k}([a,b];$ $V)$ for any $k\ge 1$.
In particular, the following  property holds:
\begin{lemma}\label{exd}
Let $k\ge 1$ and $h\in \cz\cac_{k+1}([a,b];V)$. Then there exists a (non unique)
$f\in\cac_{k}([a,b];V)$ such that $h=\der f$.
\end{lemma}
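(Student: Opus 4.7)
The plan is to construct $f$ explicitly by ``capping'' $h$ with the fixed endpoint $a$, which amounts to writing down the standard contracting homotopy for the cochain complex $(\cac_*([a,b];V),\der)$ on the contractible interval $[a,b]$. Concretely, given $h \in \cz\cac_{k+1}([a,b];V)$, I would set
\[
f_{t_1 \cdots t_k} = (-1)^{k+1}\, h_{a\, t_1 \cdots t_k}, \qquad t_1,\ldots,t_k \in [a,b].
\]
The first step is to verify that $f$ really belongs to $\cac_k([a,b];V)$: when $t_i = t_{i+1}$ for some $1 \le i \le k-1$, two consecutive arguments of $h$ coincide, and the defining vanishing property of $\cac_{k+1}$ forces $f_{t_1 \cdots t_k}=0$.

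The main computation is then $\der f = h$. The key observation is not to expand $(\der f)_{t_1 \cdots t_{k+1}}$ in isolation, but instead to apply the hypothesis $\der h = 0$ to the \emph{augmented} $(k+2)$-tuple $(a,t_1,\ldots,t_{k+1})$. Using the definition \eqref{eq:coboundary}, the identity $(\der h)_{a t_1 \cdots t_{k+1}} = 0$ splits into the term produced by omitting the first slot $a$ (which is, up to the sign $(-1)^{k}$, exactly $h_{t_1 \cdots t_{k+1}}$) and $k+1$ further terms, each of the form $(-1)^{k-i}\,h_{a t_1 \cdots \hat t_i \cdots t_{k+1}}$ for $i=1,\ldots,k+1$. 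Upon reinserting the factor $(-1)^{k+1}$ that was absorbed into the definition of $f$, this latter sum is precisely $(\der f)_{t_1 \cdots t_{k+1}}$, so solving for $h_{t_1 \cdots t_{k+1}}$ gives $h = \der f$.

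Non-uniqueness is immediate: for any $g \in \cac_{k-1}([a,b];V)$ (or, when $k=1$, any constant element of $V$), the increment $f + \der g$ still satisfies $\der(f + \der g) = \der f = h$ by the fundamental property $\der\der = 0$ recorded in the excerpt. I do not foresee any genuine obstacle here --- the statement is a purely algebraic acyclicity result, and the construction above is the associated chain homotopy. The only point requiring attention is the bookkeeping of the alternating sign $(-1)^{k-i}$ in the definition of $\der$, so that the contribution of the omitted endpoint $a$ in $\der h$ matches $h$ with the correct global sign.
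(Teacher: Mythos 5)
Your proof is correct: the sign bookkeeping works out (with $f_{t_1\cdots t_k}=(-1)^{k+1}h_{a\,t_1\cdots t_k}$, expanding $(\der h)_{a\,t_1\cdots t_{k+1}}=0$ does yield $\der f=h$), the membership $f\in\cac_k$ and the non-uniqueness remark are both handled. The paper itself gives no argument here --- it simply asserts that $\cz\cac_{k+1}=\cb\cac_k$ is ``easily checked'' --- and your contracting-homotopy construction is exactly the standard verification (the one in Gubinelli's original paper) that this assertion tacitly relies on.
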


Observe that Lemma \ref{exd} implies in particular that all  elements
$h \in\cac_2([a,b];V)$  with $\der h= 0$ can be written as $h = \der f$
for some  $f \in \cac_1([a,b];V)$. Thus we have a heuristic
interpretation of $\der |_{\cac_2([a,b];V)}$:  it measures how much a
given 1-increment  differs from being an  exact increment of a
function, i.e., a finite difference.

\vspace{0.3cm}

 Our further discussion will mainly rely on
$k$-increments with $k \le 2$.
For  simplicity of the exposition, we will assume that $V=\R^d$ in what follows,
although $V$
could be in fact  any Banach space.
We measure the size of the increments by H\"older norms,  which are
defined in the following way: for $f \in \cac_2([a,b];V)$ let
\begin{align*} 
\|f\|_{\mu} =
\sup_{s,t\in [a,b]}\frac{|f_{st}|}{|t-s|^\mu}\end{align*}
and $$
\cac_2^\mu([a,b];V)=\lcl f \in \cac_2([a,b];V);\, \|f\|_{\mu}<\infty  \rcl.
$$
Obviously, the usual H\"older spaces $\cac_1^\mu([a,b];V)$  are determined
        in the following way: for a continuous function $g\in\cac_1([a,b];V)$  set
\begin{equation*} 
\|g\|_{\mu}=\|\der g\|_{\mu},
\end{equation*}
and we will say that $g\in\cac_1^\mu([a,b];V)$ iff $\|g\|_{\mu}$ is finite.
Note that $\|\cdot\|_{\mu}$ is only a semi-norm on $\cac_1([a,b];V)$,
but we will  work  in general on spaces of the type
\begin{equation*}
\cac_{1,\al}^\mu([a,b];V)=
\lcl g:[a,b] \to V;\, g_a=\alpha,\, \|g\|_{\mu}<\infty \rcl,
\end{equation*}
for a given $\alpha \in V,$ on which $\|g\|_{\mu}$  is a norm.

 For $h \in \cac_3([a,b];V)$ we define in the same way
\begin{eqnarray}\label{eq:notation-norm-C3}
  \norm{h}_{\gamma,\rho} &=& \sup_{s,u,t\in [a,b] }
\frac{|h_{sut}|}{|u-s|^\gamma |t-u|^\rho}\\
\|h\|_\mu &= &
\inf \left \{\sum_i \|h_i\|_{\rho_i,\mu-\rho_i} ; (\rho_{i},h_{i})_{i \in \N} \textrm{ with } h_{i} \in \cac_3([a,b];V),
 \sum_i h_i =h , 0 < \rho_i < \mu  \right\}. \nonumber
\end{eqnarray}
Then  $\|\cdot\|_\mu$  is a norm on $\cac_3([a,b];V)$, see \cite{Gu}, and we define
$$
\cac_3^\mu([a,b];V):=\lcl h\in\cac_3([a,b];V);\, \|h\|_\mu<\infty \rcl.
$$
Eventually,
let $\cac_3^{1+}([a,b];V) = \cup_{\mu > 1} \cac_3^\mu([a,b];V)$
and  note that the same kind of norms can be considered on the
spaces $\cz \cac_3([a;b];V)$, leading to the definition of  the  spaces
$\cz \cac_3^\mu([a;b];V)$ and $\cz \cac_3^{1+}([a,b];V)$.

\vspace{0.3cm}

The crucial point in this algebraic approach to the  integration of irregular
paths is that the operator
$\delta$ can be inverted under mild  smoothness assumptions. This
inverse is called $\laa$. The proof of the following proposition  may be found
in \cite{Gu}, and in a simpler form in \cite{GT}.

\medskip

\begin{proposition}
\label{prop:Lambda}
There exists a unique linear map $\Lambda: \cz \cac^{1+}_3([a,b];V)
\to \cac_2^{1+}([a,b];V)$ such that
$$
\delta \Lambda  = \id_{\cz \cac_3^{1+}([a,b];V)}
\quad \mbox{ and } \quad \quad
\Lambda  \delta= \id_{\cac_2^{1+}([a,b];V)}.
$$
In other words, for any $h\in\cac^{1+}_3([a,b];V)$ such that $\der h=0$,
there exists a unique $g=\laa(h)\in\cac_2^{1+}([a,b];V)$ such that $\der g=h$.
Furthermore, for any $\mu > 1$,
the map $\laa$ is continuous from $\cz \cac^{\mu}_3([a,b];V)$
to $\cac_2^{\mu}([a,b];V)$ and we have
\begin{equation}\label{ineqla}
\|\Lambda h\|_{\mu} \le \frac{1}{2^\mu-2} \|h\|_{\mu} ,\qquad h \in
\cz \cac^{\mu}_3([a,b];V).
\end{equation}
\end{proposition}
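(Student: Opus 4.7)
The plan splits into uniqueness and existence, both relying on the rigidity afforded by $\mu > 1$.

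\textbf{Uniqueness.} I would first show that any $g \in \cac_2^{1+}([a,b];V)$ with $\delta g = 0$ is identically zero. The relation $\delta g = 0$ unfolds to $g_{st} = \sum_{i=0}^{n-1} g_{t_i,t_{i+1}}$ on the uniform $n$-point subdivision of $[s,t]$, and the H\"older-$\mu$ bound ($\mu > 1$) forces
\[
|g_{st}| \le n \cdot \|g\|_\mu \lp (t-s)/n \rp^\mu = \|g\|_\mu (t-s)^\mu n^{1-\mu} \xrightarrow[n\to\infty]{} 0 .
\]
Applied to the difference of two candidates for $\Lambda h$ this gives uniqueness of $\Lambda$; applied to $\Lambda \delta f - f$ it gives $\Lambda \delta = \id_{\cac_2^{1+}}$ once existence is known.

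\textbf{Construction and the bound (\ref{ineqla}).} For existence I would build $\Lambda h$ as a limit of Riemann-type sums along dyadic partitions. Iterating the target identity $g_{st} = g_{su} + g_{ut} + h_{sut}$ on the dyadic partition $\pi^n = \{t^n_k := s + k(t-s)/2^n\}$ gives, by a quick induction,
\[
g_{st} = \sum_{k=0}^{2^n - 1} g_{t^n_k, t^n_{k+1}} + G^n_{st}, \qquad G^n_{st} := \sum_{k=1}^{2^n - 1} h_{t^n_{k-1}, t^n_k, t} .
\]
The first sum vanishes as $n \to \infty$ for any $g \in \cac_2^{1+}$, so I would \emph{define} $(\Lambda h)_{st} := \lim_n G^n_{st}$. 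To prove that the limit exists I would refine $\pi^n$ to $\pi^{n+1}$ by inserting midpoints $u_k := (t^n_{k-1} + t^n_k)/2$, use the cocycle condition $\delta h = 0$ on $(t^n_{k-1}, u_k, t^n_k, t)$ to collapse $h_{t^n_{k-1}, u_k, t} + h_{u_k, t^n_k, t} - h_{t^n_{k-1}, t^n_k, t}$ to the single residue $h_{t^n_{k-1}, u_k, t^n_k}$, and then apply the sharp estimate $|h_{a,(a+b)/2,b}| \le \|h\|_\mu ((b-a)/2)^\mu$ (obtained by balancing the exponents $\rho$ and $\mu-\rho$ at the midpoint in (\ref{eq:notation-norm-C3})). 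Summing over the $2^n$ slots yields
\[
|G^{n+1}_{st} - G^n_{st}| \le 2^n \|h\|_\mu \lp (t-s)/2^{n+1} \rp^\mu = \|h\|_\mu (t-s)^\mu \, 2^{-\mu} \cdot 2^{-n(\mu-1)} .
\]
Summing this geometric series from $G^0_{st} = 0$ gives both convergence and the bound $|(\Lambda h)_{st}| \le \|h\|_\mu (t-s)^\mu / (2^\mu - 2)$, i.e.\ (\ref{ineqla}).

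\textbf{Verification of $\delta \Lambda h = h$ and main obstacle.} A parallel telescoping of $(\delta G^n)_{sut}$ for a generic triple $s < u < t$ should show that the defect $(\delta G^n)_{sut} - h_{sut}$ decays to $0$ as $n \to \infty$, again thanks to $\delta h = 0$; passing to the limit yields $\delta \Lambda h = h$. Linearity of $\Lambda$ and its continuity from $\cz\cac_3^\mu$ to $\cac_2^\mu$ are then immediate from the explicit bound. The main technical obstacle is the cancellation in the Cauchy step: one must choose the partition sum $G^n$ so that $\delta h = 0$ compresses each refinement increment to a single residue of size $\|h\|_\mu 2^{-(n+1)\mu}(t-s)^\mu$ per slot, exactly summable over $2^n$ slots because $\mu > 1$. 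A secondary subtlety is that $\|h\|_\mu$ in (\ref{eq:notation-norm-C3}) is defined as an infimum over decompositions $h = \sum_i h_i$; I would apply the construction to each $h_i$ separately and reassemble using linearity of $\Lambda$.
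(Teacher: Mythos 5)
The paper itself does not prove this proposition --- it only cites \cite{Gu} and \cite{GT} --- so your proposal must stand on its own. Its skeleton is the standard ``sewing'' argument and is essentially right: the uniqueness step (additivity plus H\"older exponent $\mu>1$ forces $g\equiv 0$) is complete, the telescoping identity defining $G^n$ is correct, and the dyadic Cauchy estimate does reproduce the sharp constant: the refinement produces $2^n-1$ collapsed residues $h_{t^n_{k-1},u_k,t^n_k}$ plus one leftover boundary term $h_{t^n_{2^n-1},u_{2^n},t}$, all evaluated at midpoint configurations, i.e.\ exactly $2^n$ contributions of size $\|h\|_{\mu}\lp(t-s)/2^{n+1}\rp^{\mu}$, whose geometric sum is $(t-s)^{\mu}\|h\|_{\mu}/(2^{\mu}-2)$.

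Two points are genuine gaps. First, your handling of the infimum norm (\ref{eq:notation-norm-C3}) fails as stated: you cannot ``apply the construction to each $h_i$ separately,'' because the summands of a decomposition $h=\sum_i h_i$ need not satisfy $\der h_i=0$, and without the cocycle identity the sequences $G^n[h_i]$ have no reason to converge. The decomposition must enter only in the final pointwise bound: use $\der h=0$ for the \emph{full} $h$ to collapse the refinement to midpoint residues, then estimate each residue by $|h_{a,m,b}|\le\sum_i\|h_i\|_{\rho_i,\mu-\rho_i}\lp(b-a)/2\rp^{\mu}$ (valid for every admissible decomposition since both gaps at the midpoint are equal, so the exponents $\rho_i$ and $\mu-\rho_i$ always recombine to $\mu$), and take the infimum at the end.

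Second, the verification $\der\laa h=h$ is the delicate heart of the lemma, and ``a parallel telescoping \ldots should show'' is not yet an argument: for a generic triple $s<u<t$ the dyadic points of $[s,u]$ and $[u,t]$ are not dyadic points of $[s,t]$, so $(\der G^n)_{sut}$ mixes three incompatible partitions and nothing telescopes. You must either upgrade the Cauchy estimate to convergence along \emph{arbitrary} partitions with mesh tending to zero (e.g.\ by the Young-type successive-point-removal argument), which gives partition-independence and lets you compute $(\der\laa h)_{sut}$ with partitions of $[s,t]$ containing $u$; or argue abstractly by writing $h=\der B$ for some $B\in\cac_2$ via Lemma \ref{exd} and showing that $B-\laa h$ is additive, whence $\der\laa h=\der B=h$. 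Without one of these devices the existence half of the proof is incomplete.
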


\medskip

This mapping  $\laa$  allows to construct a generalised Young integral:
\begin{corollary}
\label{cor:integration}
For any 1-increment $g\in\cac_2 ([a,b];V)$ such that $\der g\in\cac_3^{1+}([a,b];V)$
set
$
\delta f = (\id-\Lambda \delta) g
$.
Then
$$
(\delta f)_{st} = \lim_{|\Pi_{st}| \to 0} \sum_{i=0}^n g_{t_i\,t_{i+1}}
$$
for $a\leq s < t \leq b$, where the limit is taken over any partition $\Pi_{st} = \{t_0=s,\dots,
t_n=t\}$ of $[s,t]$, whose mesh tends to zero. Thus, the
1-increment $\delta f$ is the indefinite integral of the 1-increment $g$.
\end{corollary}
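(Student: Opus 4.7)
My plan is to split the argument into three short steps: first check that the formula $\delta f = (\id - \Lambda\delta)g$ actually defines a bona fide increment of a $1$-function; then express the Riemann sums of $g$ along a partition as $(\delta f)_{st}$ plus a residual built out of $\Lambda\delta g$; and finally dominate that residual using the super-$1$-H\"older regularity furnished by Proposition \ref{prop:Lambda}.

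To begin, I would justify the notation $\delta f$. By hypothesis $\delta g \in \cac_3^{1+}([a,b];V)$, and since $\delta\delta = 0$ it in fact lies in $\cz\cac_3^{1+}([a,b];V)$. Proposition \ref{prop:Lambda} then yields $\Lambda\delta g \in \cac_2^{1+}([a,b];V)$ with $\delta \Lambda\delta g = \delta g$. Consequently
$$
\delta\bigl((\id - \Lambda\delta) g\bigr) = \delta g - \delta\Lambda\delta g = 0,
$$
so $(\id - \Lambda\delta)g \in \cz\cac_2 = \cb\cac_1$, and Lemma \ref{exd} produces $f \in \cac_1([a,b];V)$ with $\delta f = g - \Lambda\delta g$, as claimed.

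Next, I would unfold the Riemann sum along an arbitrary partition $\Pi_{st} = \{s = t_0 < \cdots < t_n = t\}$. By the very definition of $f$,
$$
g_{t_i t_{i+1}} = (\delta f)_{t_i t_{i+1}} + (\Lambda\delta g)_{t_i t_{i+1}}, \qquad i=0,\dots,n-1.
$$
Summing and using that $(\delta f)_{t_i t_{i+1}} = f_{t_{i+1}} - f_{t_i}$ telescopes to $(\delta f)_{st}$, I would conclude
$$
\sum_{i=0}^{n-1} g_{t_i t_{i+1}} - (\delta f)_{st} = \sum_{i=0}^{n-1} (\Lambda\delta g)_{t_i t_{i+1}}.
$$

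The last step is to show that the right-hand side vanishes with the mesh. Since $\Lambda\delta g \in \cac_2^{\mu}([a,b];V)$ for some $\mu > 1$, each summand is bounded by $\|\Lambda\delta g\|_\mu |t_{i+1}-t_i|^\mu$, and hence
$$
\Bigl|\sum_{i=0}^{n-1} (\Lambda\delta g)_{t_i t_{i+1}}\Bigr| \le \|\Lambda\delta g\|_\mu \, (t-s) \, |\Pi_{st}|^{\mu-1} \xrightarrow[|\Pi_{st}|\to 0]{} 0,
$$
which gives the stated convergence. There is essentially no obstacle beyond the conceptual observation that $\Lambda$ is designed precisely to extract from $g$ a super-$1$-H\"older corrector whose sampled sums die in the refinement limit; once this is noticed, the entire proof reduces to the telescoping identity above and the quantitative bound \eqref{ineqla}.
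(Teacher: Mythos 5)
Your proof is correct and is precisely the standard argument (the paper itself defers to Gubinelli's work for this corollary): the identity $\delta f=g-\Lambda\delta g$ plus $\delta\delta=0$ and Lemma \ref{exd} justify the existence of $f$, the Riemann sum telescopes to $(\delta f)_{st}$ plus the sampled values of $\Lambda\delta g\in\cac_2^{\mu}$ with $\mu>1$, and the bound $\sum_i|t_{i+1}-t_i|^{\mu}\le (t-s)\,|\Pi_{st}|^{\mu-1}$ kills the remainder. No gaps.
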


\medskip

We also need some product rules for the operator $\delta$. For this
recall the following convention:
for  $g\in\cac_n([a,b];\R^{l,d})$ and $h\in\cac_m( [a,b];\R^{d,p}) $ let  $gh$
be the element of $\cac_{n+m-1}( [a,b];\R^{l,p})$ defined by
\begin{equation}\label{cvpdt}
(gh)_{t_1,\dots,t_{m+n-1}}=
g_{t_1,\dots,t_{n}} h_{t_{n},\dots,t_{m+n-1}},
\end{equation}
for $t_1,\dots,t_{m+n-1}\in [a,b].$
\begin{proposition}\label{difrul} It holds:
\begin{enumerate}
\item[{\it(i)}]
Let $g\in\cac_1([a,b];\R^{l,d})$ and $h\in\cac_1([a,b],\R^d)$. Then
$gh\in\cac_1(\R^l)$ and
\begin{equation*}
\der (gh) = \der g\,  h + g\, \der h.
\end{equation*}
\item[{\it(ii)}]
Let $g\in\cac_1([a,b]; \R^{l,d})$ and $h\in\cac_2([a,b];\R^d)$. Then
$gh\in\cac_2([a,b];\R^l)$ and
\begin{equation*}
\der (gh) = - \der g\, h + g \,\der h.
\end{equation*}
\item[{\it(iii)}]
Let $g\in\cac_2 ([a,b];\R^{l,d})$ and $h\in\cac_1([a,b];\R^d)$. Then
$gh\in\cac_2([a,b];\R^l)$ and
\begin{equation*}
\der (gh) = \der g\, h  + g \,\der h.
\end{equation*}
\item[{\it(iv)}]
Let $g\in\cac_2 ([a,b];\R^{l,d})$ and $h\in\cac_2([a,b];\R^{d,p})$. Then
$gh\in\cac_3([a,b];\R^{l,p})$ and
\begin{equation*}
\der (gh) = - \der g\, h  + g \,\der h.
\end{equation*}
\end{enumerate}
\end{proposition}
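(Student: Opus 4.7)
The plan is to verify each of the four identities by direct computation from the definition of $\delta$ in \eqref{eq:coboundary} and the product convention in \eqref{cvpdt}. There is no deeper tool needed: each statement amounts to expanding the left-hand side, splitting a difference using a telescoping argument, and matching the pieces against the convention. I would therefore spell out one representative non-trivial case, namely (ii), and note that the remaining items follow by the same bookkeeping.

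For (i), since $g$ and $h$ are both $1$-increments, $(gh)_t = g_t h_t$ so that $\delta(gh)_{st} = g_t h_t - g_s h_s$. Adding and subtracting $g_s h_t$ splits this as $(g_t-g_s) h_t + g_s (h_t-h_s)$, which by \eqref{cvpdt} is exactly $(\delta g\, h)_{st} + (g\, \delta h)_{st}$.

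For (ii), the convention gives $(gh)_{st} = g_s h_{st}$, and applying $\delta$ yields
\begin{equation*}
\delta(gh)_{sut} = g_s h_{st} - g_s h_{su} - g_u h_{ut}.
\end{equation*}
Now write $-g_u h_{ut} = -(g_u-g_s) h_{ut} - g_s h_{ut}$; substituting this back and regrouping gives
\begin{equation*}
\delta(gh)_{sut} = -(g_u-g_s) h_{ut} + g_s\bigl(h_{st} - h_{su} - h_{ut}\bigr).
\end{equation*}
By \eqref{cvpdt} the first term is $-(\delta g\, h)_{sut}$ (with the product convention linking the shared middle index $u$) and the bracket equals $(\delta h)_{sut}$, so the second term is $(g\,\delta h)_{sut}$. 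The minus sign is precisely the weight $(-1)^{k-i}$ assigned to the omitted-argument term of $\delta h$ at the shared index.

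Cases (iii) and (iv) are proved by exactly the same expand-and-regroup procedure. In (iii), $(gh)_{st} = g_{st} h_t$, and splitting $g_{st} h_t = g_{st}(h_t - h_u) + g_{st} h_u$ after applying $\delta$ produces the two desired terms with a plus sign, since the $1$-increment $h$ carries no internal alternating signs. In (iv), $(gh)$ is a $3$-increment via $(gh)_{sut} = g_{su} h_{ut}$, and the four-point operator $\delta$ from $\cac_3$ to $\cac_4$ generates precisely four terms that collect into $-\delta g\, h$ and $g\,\delta h$ after regrouping. There is no real obstacle here; the only care required is tracking which index is shared between the two factors in each instance of \eqref{cvpdt}, as this is what controls the sign in front of $\delta g\, h$.
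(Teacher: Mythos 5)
Your approach is the right one, and indeed the only sensible one: the paper itself states Proposition \ref{difrul} without proof (it is a standard bookkeeping lemma from Gubinelli's framework), so a direct verification from the definition \eqref{eq:coboundary} of $\delta$ and the product convention \eqref{cvpdt} is exactly what is called for. Your computations for (i) and (ii) are correct, and your identification of the shared index in \eqref{cvpdt} as the source of the sign is the right way to understand why $\delta g\,h$ picks up a minus sign precisely when $h$ is a $2$-increment.

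One slip in the sketch of (iii): the term you need to split is not $g_{st}h_t$ but $-g_{su}h_u$. Writing $-g_{su}h_u=-g_{su}h_t+g_{su}(h_t-h_u)$ gives
\begin{equation*}
\delta(gh)_{sut}=g_{st}h_t-g_{su}h_u-g_{ut}h_t
=\bigl(g_{st}-g_{su}-g_{ut}\bigr)h_t+g_{su}\bigl(h_t-h_u\bigr)
=(\delta g)_{sut}\,h_t+g_{su}(\delta h)_{ut},
\end{equation*}
which is $\delta g\,h+g\,\delta h$ under \eqref{cvpdt}. The split you propose, $g_{st}h_t=g_{st}(h_t-h_u)+g_{st}h_u$, leaves the term $g_{st}(h_t-h_u)$, which is not $g_{su}(\delta h)_{ut}$ and does not regroup into the stated identity. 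This is a local error in a sketched case rather than a flaw in the method; (iv) as you describe it (four terms from the $\cac_3\to\cac_4$ coboundary collecting into $-\delta g\,h+g\,\delta h$) checks out.
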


\vspace{0.3cm}


\subsection{Classical controlled paths (CCP)}\label{sec:cl4}

In the remainder of this article, we will use  both the notations $\ist f dg$
or $\cj_{st}(f\, dg)$ for the integral of a function $f$ with
respect to a given function $g$ on the interval $[s,t]$.
Moreover,  we also set
$ \| f \|_{\infty} = \sup_{ x \in \R^{d,l} } |f(x)| $
for a function $f: \R^{d,l} \rightarrow  \R^{m,n}$. To simplify the notation
we will write $\cac_k^{\gamma}$ instead of
$\cac_k^{\gamma}([a,b];V)$, if $[a,b]$ and $V$ are obvious from the context.

\vspace{0.3cm}

Before we consider the technical  details,
we will make some heuristic considerations about the properties that the solution of  equation
  (\ref{de_rp}) should enjoy. Set
$\hsi_t=\si\lp y_{t} \rp$, and suppose that $y$ is a solution of  (\ref{de_rp}),
which satisfies $y\in\cac_1^\ka$ for a given $\frac13<\ka<\ga$.
Then the integral form of our equation can be
written as
\begin{equation}\label{intg}
y_t=\alpha+\iot \hsi_u dx_u,  \qquad t\in\ott.
\end{equation}
Our approach to generalised integrals induces us to work with increments
of the form $(\delta y)_{st}=y_t-y_s$  instead  of (\ref{intg}). It is immediate that one can decompose
the increments of
(\ref{intg}) into
$$
(\delta y)_{st}=\ist \hsi_u dx_u=\hsi_s (\delta x)_{st}+\rho_{st}
\quad\mbox{ with }\quad
\rho_{st}=\ist (\hsi_u-\hsi_s) dx_u.
$$
We thus have  obtained a decomposition of $y$ of the form
$\delta y=\hsi\delta x+\rho$.  Let us see, still at a heuristic level,
 which regularity we can expect for $\hsi$ and $\rho$: If
$\si$ is bounded and continuously differentiable, we have that $\hsi$ is bounded and
$$
|\hsi_t-\hsi_s|
\le
\|\si'  \|_{\infty}  \|y\|_{\ka} |t-s|^{\ka},
$$
where $\|y\|_{\ka}$ denotes the $\ka$-H\"older norm of $y$.  Hence
 $\hsi$ belongs to $\cac_1^\ka$ and is bounded.
As far as $\rho$ is
concerned, it should
inherit both the regularities of $\delta\hsi$ and $x$,  provided that  the integral
$\ist (\hsi_u-\hsi_s) dx_u=\ist(\delta\hsi)_{su}dx_u$ is well defined.  Thus, one should
expect that $\rho\in\cac_2^{2\ka}$.  In summary, we have found that
a solution $\delta y$ of  equation   (\ref{intg}) should be decomposable into
\begin{equation}\label{first:structure1}
\delta y =\hsi \delta x + \rho
\quad\mbox{ with }\quad
\hsi\in\cac_1^\ka
\,\, \mbox{ bounded and } \,\,
\rho\in\cac_2^{2\ka}.
\end{equation}
This is precisely the structure we will  demand for a possible
solution  of
equation (\ref{de_rp}) respectively its integral form (\ref{intg}):
\begin{definition}\label{def:ccp}
Let $a \leq b \leq T$
and let $z$ be a path in $\cac_1^\ka([a,b];\R^n)$ with $\ka\le\ga$
and $2\ka+\ga>1$.
We say that $z$ is a classical controlled path based on $x$, if
$z_a= \alpha \in \R^{n}$ and $\der z\in\cac_2^\ka([a,b];\R^n)$ can be decomposed into
\begin{equation}\label{weak:dcp}
\der z=\zeta \der x+ r,
\quad\mbox{i.\!\! e.}\quad
(\der z)_{st}=\zeta_s (\der x)_{st} + \rho_{st},
\quad s,t\in [a,b],
\end{equation}
with $\zeta\in\cac_1^\ka([a,b];\R^{n,d})$ and $\rho \in \cac_2^{2\ka}([a,b];\R^n)$.\\
The space of classical controlled
paths on $[a,b]$ will be denoted by $\cq_{\ka,\alpha}([a,b];\R^n)$, and a path
$z\in\cq_{\ka,\alpha}([a,b];\R^n)$ should be considered in fact as a couple
$(z,\zeta)$.\\
The
norm on $\cq_{\ka,\alpha}([a,b];\R^n)$ is given
by
$$
\cn[z;\cq_{\ka,\alpha}([a,b];\R^n)]=
 \sup_{s,t \in [a,b]}
\frac{|(\delta z)_{st}|}{{}\, \, |s-t|^{\kappa}} +  \sup_{s,t \in [a,b]}
\frac{| \rho_{st}|}{{}\, \, |s-t|^{2\kappa}} +
 \sup_{t \in [a,b]} |\zeta_t | +  \sup_{s,t \in [a,b]}
\frac{|(\delta \zeta)_{st}|}{{}\, \, |s-t|^{\kappa}}. $$
\end{definition}

\smallskip

Note that  in the above definition $\alpha$ corresponds to a given initial condition and
$\rho$ can be understood as a regular part. Moreover, observe that $a$ can be negative.

\smallskip

Now we can
sketch the strategy used in \cite{Gu}, in order
to solve equation (\ref{de_rp}):
\begin{enumerate}
\item[(a)]
Verify the stability of $\cq_{\ka,\alpha}([a,b];\R^n)$ under a smooth map
$\vp: \R^{n} \rightarrow \R^{n,d}$.
\item[(b)]
Define rigorously the integral $\int z_u dx_u=\cj(z dx)$
for a classical controlled path $z$ and compute its decomposition
(\ref{weak:dcp}).
\item[(c)]
Solve equation (\ref{de_rp}) in the space $\cq_{\ka,\alpha}([a,b]; \R^{n})$
by a fixed point argument.
\end{enumerate}
Actually, for the second point  we had
to impose  a priori the following hypothesis  on the driving rough
  path, which is a standard  assumption in the rough paths theory:

\smallskip

\begin{hypothesis}\label{hyp:x-cl}
The $\R^d$-valued
$\ga$-H\"older path $x$ admits a L\'evy area,
 i.e. a process $\xd=\cj(dx dx)\in\cac_2^{2\ga}([0,T];\R^{d, d})$, which satisfies
$
\der\xd=\der x\otimes \der x,$
that is
$$ \lc (\der\xd)_{sut} \rc(i,j)
=
[\der x^{i}]_{su} [\der x^{j}]_{ut},
\quad \textrm{for all } \quad s,u,t\in\ott, \, i,j\in\{1,\ldots,d  \}.
$$
\end{hypothesis}

\medskip

Then, using the
  strategy sketched above, the following
  result is obtained in \cite{Gu}:

\smallskip

\begin{theorem}\label{thm:ex-uniq1}
Let $x$ be a process satisfying Hypothesis \ref{hyp:x-cl} and
let $\si\in C^2(\R^{n}; \R^{n,d})$ be  bounded
together with its derivatives. Then we have:
\begin{enumerate}
\item
Equation (\ref{de_rp}) admits a unique solution $y$ in
$\cq_{\ka,\alpha}([0,T];\R^n)$ for any $\ka<\ga$ such that $2\ka+\ga>1$.
\item
The mapping $(\alpha,x,\xd)\mapsto y$ is continuous from
$\R^n\times\cac_1^{\ga}([0,T];\R^d)\times\cac_2^{2\ga}([0,T];\R^{d, d})$
to $\cq_{\ka,\alpha}([0,T];\R^n)$, in a sense which is detailed in \cite[Proposition 8]{Gu}.
\end{enumerate}
\end{theorem}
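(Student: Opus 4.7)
The strategy is to carry out the three-step program (a)--(c) outlined just before Hypothesis \ref{hyp:x-cl} in the framework provided by Proposition \ref{prop:Lambda} (the $\Lambda$ map) and the product/differentiation rules of Proposition \ref{difrul}. Throughout I fix $\ka$ with $\tfrac13<\ka<\ga$ and $2\ka+\ga>1$.

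\textbf{Step (a): stability under smooth maps.} Given a controlled path $(z,\zeta)\in\cq_{\ka,\alpha}([a,b];\R^{n})$ with $\der z=\zeta\der x+\rho$, and $\vp\in C^{2}_{b}(\R^{n};\R^{n,d})$, I will show that $\hat z_{t}:=\vp(z_{t})$ is again controlled, with decomposition $\der\hat z=\hat\zeta\der x+\hat\rho$, where $\hat\zeta_{s}:=\vp'(z_{s})\zeta_{s}$ and $\hat\rho_{st}:=\vp'(z_{s})\rho_{st}+R^{\vp}_{st}$, with $R^{\vp}_{st}$ the second-order Taylor remainder of $\vp$. Boundedness of $\vp,\vp',\vp''$ together with $z\in\cac_{1}^{\ka}$ gives $\hat\zeta\in\cac_{1}^{\ka}$ and $\hat\rho\in\cac_{2}^{2\ka}$, with explicit control of $\cn[\hat z;\cq_{\ka,\vp(\alpha)}]$ in terms of $\cn[z;\cq_{\ka,\alpha}]$.

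\textbf{Step (b): construction of the integral.} For a controlled path $z$ as above I define a \emph{candidate germ}
\begin{equation*}
A_{st}:=z_{s}(\der x)_{st}+\zeta_{s}\xdst.
\end{equation*}
Using Proposition \ref{difrul} and the Chen-type identity $\der\xd=\der x\otimes\der x$ of Hypothesis \ref{hyp:x-cl}, a direct computation gives $\der A=-\der\rho\,\der x-\der\zeta\,\xd$, which belongs to $\cz\cac_{3}^{3\ka}\subset\cz\cac_{3}^{1+}$ because $3\ka>1$. Proposition \ref{prop:Lambda} then allows to set
\begin{equation*}
\cj_{st}(z\,dx):=A_{st}-\Lambda_{st}(\der\rho\,\der x+\der\zeta\,\xd),
\end{equation*}
and Corollary \ref{cor:integration} identifies this with the Riemann-sum limit. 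One then reads off from the formula the controlled-path decomposition of $\cj(zdx)$: the derivative process is $z$ itself and the remainder $r^{\cj}_{st}=\zeta_{s}\xdst-\Lambda_{st}(\cdots)$ lies in $\cac_{2}^{2\ka}$. The estimate \eqref{ineqla} yields
\begin{equation*}
\cn[\cj(z\,dx);\cq_{\ka,0}]\le C_{T}\bigl(\|x\|_{\ga}+\|\xd\|_{2\ga}\bigr)\bigl(|\zeta_{a}|+\cn[z;\cq_{\ka,\alpha}]\bigr),
\end{equation*}
where $C_{T}$ is a polynomial in $T^{\ga-\ka}$ that vanishes as $T\to 0$. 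Combining Steps (a) and (b) gives a well-defined map $\Gamma:\cq_{\ka,\alpha}\to\cq_{\ka,\alpha}$, $\Gamma(y)_{t}:=\alpha+\cj_{0t}(\si(y)\,dx)$.

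\textbf{Step (c): fixed point and patching.} I will prove that on a sufficiently small interval $[0,T_{0}]$ the map $\Gamma$ stabilises a ball $B_{M}\subset\cq_{\ka,\alpha}$ and is a strict contraction. Invariance of $B_{M}$ follows from the estimate above, together with $\|\si\|_{\infty}+\|\si'\|_{\infty}+\|\si''\|_{\infty}<\infty$; the factor $T_{0}^{\ga-\ka}$ absorbs the growth. For the contraction one repeats Steps (a)--(b) on a difference $y^{1}-y^{2}$: the smoothness of $\si$ yields Lipschitz bounds of order $\cn[y^{1}-y^{2};\cq_{\ka,0}]$ for $\si(y^{1})-\si(y^{2})$, and linearity of $\cj$ in its integrand transfers this to $\Gamma(y^{1})-\Gamma(y^{2})$ with the same small prefactor $T_{0}^{\ga-\ka}$. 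Banach's theorem then gives a unique fixed point on $[0,T_{0}]$; since $T_{0}$ depends only on $\|x\|_{\ga}$, $\|\xd\|_{2\ga}$ and $\|\si\|_{C^{2}_{b}}$ (and \emph{not} on $\alpha$, because $\si$ is bounded), the solution is extended to $[0,T]$ by concatenation, re-initialising at times $T_{0},2T_{0},\ldots$. For assertion (2), each of the three maps (composition with $\si$, the integral $\cj$, the fixed-point step) is locally Lipschitz in the data $(\alpha,x,\xd)$ in the relevant norms --- the only non-trivial dependence enters through $\Lambda$, which is continuous by Proposition \ref{prop:Lambda} --- and a standard perturbation-of-fixed-point argument (Gronwall-type over the successive patches) propagates continuity from $[0,T_{0}]$ to $[0,T]$.

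The main obstacle is the \emph{closure} of the contraction estimate in Step (c): one must show that the decomposition $\delta[\sigma(y^{1})-\sigma(y^{2})]=[\sigma'(y^{1})\zeta^{1}-\sigma'(y^{2})\zeta^{2}]\delta x+(\text{remainder})$ is controlled in $\cq_{\ka,0}$ by $\cn[y^{1}-y^{2};\cq_{\ka,0}]$ \emph{linearly}, which requires carefully tracking second-order Taylor terms involving $\si''$ and cross-terms between $y^{1},y^{2}$ and their derivative processes. This is where the full strength of $\si\in C^{2}_{b}$ is used and where the polynomial-in-norms bounds on $B_{M}$ must be kept uniform so that the $T_{0}^{\ga-\ka}$ prefactor indeed produces a contraction.
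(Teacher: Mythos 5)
The paper offers no proof of this theorem at all --- it is quoted from \cite{Gu} --- and your proposal is exactly the strategy (a)--(c) that the paper sketches just before Hypothesis \ref{hyp:x-cl}, the same strategy it then carries out in full for the delayed generalization (Propositions \ref{compo:ccp-phi}, \ref{compo:ccp-loc-lin} and \ref{intg:mdx}, Theorem \ref{thm:ex-uniq}), so your approach is the intended one and is sound. Two transcription slips to repair in Step (b): the correct 3-increment is $\der A=-\left(\rho\,\der x+\der\zeta\cdot\xd\right)$, not $-\der\rho\,\der x-\der\zeta\,\xd$ (indeed $\rho$ is already a 2-increment, and the integral must then be $(\id-\Lambda\der)A=A+\Lambda\left(\rho\,\der x+\der\zeta\cdot\xd\right)$ so that $\der\cj(z\,dx)=0$), and its H\"older exponent is $2\ka+\ga>1$, which is the actual hypothesis of the theorem, rather than $3\ka$ --- using the former removes your superfluous restriction $\ka>\frac13$ and covers the full stated range of $\ka$.
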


\medskip

\section{The delay equation}

In this section, we  make a first step towards the solution of the delay equation
\begin{equation}
\begin{cases}\label{dde_2}
dy_t= \sigma(y_{t}, y_{t-r_1}, \ldots, y_{t-r_k})  dx_t, &   \qquad   t\in\ott,  \\
{} \, \, \,  y_t=\xi_t,  & \qquad  t \in [-r_k,0],
\end{cases}
\end{equation}
where  $x$ is a $\R^d$-valued $\gamma$-H\"older continuous function with
$\gamma>\frac13$, the function $\sigma \in C^3(\R^{n,k+1} ;\R^{n,d})$ is bounded
together with its derivatives, $\xi$ is a $\R^n$-valued
$2\gamma$-H\"older continuous function, and $0<r_1< \ldots < r_k< \infty$. For
convenience, we  set $r_0=0$
and, moreover, we will use the notation
\begin{equation}\label{eq:def-s(y)}
\mathfrak{s}(y)_t=(   y_{t-r_1}, \ldots, y_{t-r_k}     ), \qquad t \in \ott.
\end{equation}

\subsection{Delayed controlled paths}

As in the previous section,  we will first make some heuristic considerations about the properties of a solution:
set
$\hsi_t=\si(y_{t}, \mathfrak{s}(y)_t)      $ and  suppose that $y$ is a
solution of  (\ref{dde_2}) with $y\in\cac_1^\ka$ for a given $\frac13<\ka<\ga$.
Then we can write the  integral form of our equation as $$
(\delta y)_{st}=\ist \hsi_u dx_u=\hsi_s (\delta x)_{st}+\rho_{st}
\quad\mbox{ with }\quad
\rho_{st}=\ist (\hsi_u-\hsi_s) dx_u.
$$
Thus, we have  again obtained a decomposition of $y$ of the form
$\delta y=\hsi\delta x+\rho$.   Moreover, it follows (still at a heuristic level)
 that $\hsi$ is bounded and satisfies
$$
|\hsi_t-\hsi_s|
\le
\| \si '  \|_{\infty}
 \sum_{i=0}^{k} | y_{t-r_i} - y_{s-r_i}|
\le (k+1)
\|  \si '  \|_{\infty} \|y\|_{\ga} |t-s|^{\ga}.
$$
Thus, with the notation of Section
\ref{incr},
we have that $\hsi$ belongs to $\cac_1^\ga$ and is bounded.
 The term $\rho$ should again
inherit both the  regularities of $\delta\hsi$ and $x$. Thus, one should
have that $\rho\in\cac_2^{2\ka}$. In conclusion, the increment
$\delta y$ should be decomposable into
\begin{equation}\label{first:structure}
\delta y =\hsi \delta x + \rho
\quad\mbox{ with }\quad
\hsi\in\cac_1^\ga
\,\, \mbox{ bounded and } \,\,
\rho\in\cac_2^{2\ka}.
\end{equation}
This is again the   structure we will ask for a possible solution to
(\ref{dde_2}). However, this decomposition does not take  into
account that  equation (\ref{dde_2})  is actually a {\it delay} equation.
To define the integral $\int_s^{t} \hsi_u d x_u$, we have
to enlarge the class of functions we will
work with, and hence we will define
a {\it delayed controlled path} (hereafter {\small DCP} in short).

\medskip

\begin{definition}\label{def:dcp}
Let $0\leq a \leq b \leq T$ and
$z \in \cac_1^\ka([a,b];\R^{n})$ with $\frac13<\ka\le\ga$. We say that $z$ is a delayed controlled
path based on $x$, if
$z_{a}= \alpha$ belongs to $\R^{n}$
and if $\der z\in\cac_2^\ka([a,b];\R^{n})$ can be decomposed into
\begin{equation}\label{decompo:dcp}
(\der z)_{st}
=
\sum_{i=0}^{k}\, \zeta_s^{(i)}\, (\delta x)_{s-r_i,t-r_i}
+ \rho_{st}
\quad\mbox{ for }\quad
s,t\in [a,b],
\end{equation}
 where $\rho \in \cac_2^{2\ka}([a,b];\R^{n})$ and  $\zeta^{(i)} \in  \cac_1^{\ka}([a,b];\R^{n,d})$ for $i=0, \ldots, k$.\\
The space of delayed controlled
paths on $[a,b]$ will be denoted by $\cd_{\ka,\alpha} ([a,b];\R^{n})    $, and a path
$z\in\cd_{\ka,\alpha}    ([a,b];\R^{n})      $ should be considered in fact as a $(k+2)$-tuple
$(z,\zeta^{(0)}, \ldots , \zeta^{(k)})$.\\
The  norm on $\cd_{\ka,\alpha}([a,b];\R^{n})$ is given
by
\begin{eqnarray*}
&&\cn[z;\cd_{\ka,\alpha}([a,b];\R^{n})]=\sup_{s,t \in [a,b]}
\frac{|(\delta z)_{st}|}{{}\, \, |s-t|^{\kappa}} +  \sup_{s,t \in [a,b]}
\frac{| \rho_{st}|}{{}\, \, |s-t|^{2\kappa}}  \\ && \qquad \qquad  \qquad \qquad \qquad   \qquad \qquad
+ \sum_{i=0}^{k}\sup_{t \in [a,b]} |\zeta_t^{(i)} | +   \sum_{i=0}^{k} \sup_{s,t \in [a,b]}
\frac{|(\delta \zeta^{(i)})_{st}|}{{}\, \, |s-t|^{\kappa}}.
\end{eqnarray*}
\end{definition}


\vspace{0.3cm}

Now  we can sketch our strategy to solve the delay
equation:
\begin{enumerate}
\item
Consider the map $T_{\sigma}$ defined on $\cq_{\ka,\alpha}([a,b];\R^n) \times \cq_{\ka,\tilde{\alpha}}([a-r_k,b-r_1];\R^n) $ by
\begin{equation}
(T_{\sigma}(z, \tilde{z}))_t
=\si( z_t, \mathfrak{s}(\tilde{z})_{t}), \qquad t \in [a,b],
\end{equation}
where we recall that the notation $\mathfrak{s}(\tilde{z})$ has been introduced at (\ref{eq:def-s(y)}).
We will show that $T_{\sigma}$ maps  $\cq_{\ka,\alpha}([a,b];\R^n) \times \cq_{\ka,\tilde{\alpha}}([a-r_k,b-r_1];\R^n) $  smoothly onto
a space of the form $\cd_{\ka,\hat{\alpha}}([a,b];\R^{n,d})$.
\item
Define rigorously the integral $\int z_u dx_u=\cj(z dx)$
for a delayed controlled path $z \in \cd_{\ka,\hat{\alpha}}([a,b];\R^{n,d})$, show that $\cj(z dx)$ belongs
to $\cq_{\ka,\alpha}([a,b]; \R^{d})$, and compute its decomposition
(\ref{weak:dcp}).  Let us point out the following important fact: $T_{\sigma}$ creates ``delay'',
that is $T_{\sigma}(z, \tilde{z}) \in \cd_{\ka,\hat{\alpha}}( [a,b];\R^{n ,d})$,
while $\mathcal{J}$ creates ``advance'',
that is ${\mathcal J}(zdx)\in\cq_{\ka, \alpha}  ([a,b];\R^{n})      $.
\item
By combining the first two points, we will
solve equation (\ref{dde_2})
by a fixed point argument on the intervals $[0,r_1],[r_1, 2 r_1], \ldots $ .
\end{enumerate}

\subsection{Action of the map $T$ on controlled paths}

The  major part of this section will be devoted to the following two
stability results:

\begin{proposition}\label{compo:ccp-phi}
 Let $0 \leq a \leq b \leq T$, let $\al,\tilde\al$ be two initial conditions in $\R^n$
 and let $\vp \in C^{3}(\R^{n,k+1};\R^{l})$ be bounded with bounded derivatives.
Define $T_{\vp}$ on $\cq_{\ka,\alpha}([a;b];\R^{n})
\times \cq_{\ka,\tilde\alpha}([a-r_k;b-r_1];\R^{n})$
 by $T_{\vp} (z, \tilde{z})=\hz$, with
$$
\hz_t
=\vp (z_t, \mathfrak{s}(\tilde{z})_{t}), \qquad t \in [a,b].
$$
Then, setting
$\hat \al=\vp(\al,\mathfrak{s}(\tilde{z}_a))=\vp(\al,\tilde z_{a-r_1},\ldots,\tilde z_{a-r_{k-1}},\tilde\al)$, we have
$T_{\vp} (z, \tilde{z}) \in \cd_{\ka,\hat \al}([a;b];$ $\R^{l})$
 and it admits a decomposition
of the form
\begin{equation}\label{decompo:ttz}
\lp \delta \hz \rp_{st}
=  \hat \zeta_s \,  (\delta x)_{st} +
\sum_{i=1}^{k} \hat\zeta_s^{(i)}\,  (\delta x)_{s-r_i,t-r_i} +\hro_{st}, \qquad s,t \in [a,b], \end{equation}
 where $\hat\zeta,\hat\zeta^{(i)}$ are the $\R^{l,d}$-valued paths defined by
\begin{equation*} \hat \zeta_s=    \left(  \frac{ \partial \vp }{
      \partial x_{1,0}}  (z_s, \mathfrak{s}(\tilde{z})_{s})  , \ldots
  ,
    \frac{ \partial \vp }{ \partial x_{n,0}} (z_s, \mathfrak{s}(\tilde{z})_{s})
   \right) \zeta_{s} , \qquad  s\in [a,b],
\end{equation*}
and
\begin{equation*} \hat \zeta_s^{(i)}=    \left(  \frac{ \partial \vp }{
      \partial x_{1,i}}   (z_s, \mathfrak{s}(\tilde{z})_{s})
 , \ldots  ,  \frac{ \partial \vp }{ \partial x_{n,i}}
  (z_s, \mathfrak{s}(\tilde{z})_{s}) \right)  \tilde{\zeta}_{s-r_i} , \qquad s\in [a,b],
\end{equation*}
 for $i=1, \ldots, k.$
Moreover, the following estimate holds:
\begin{align}\label{bnd:t-phi-z}
&\cn[\hz;\cd_{\ka,\hat a}([a;b];  \R^{l})]
 \\ & \qquad \quad \le c_{\vp, T}  \lp 1+ \cn^2[z;\cq_{\ka,\alpha}([a,b];\R^{n})] +
\cn^2[\tilde z;\cq_{\ka,\tilde{\alpha}}([a-r_k,b-r_1];\R^{n})] \rp,
\nonumber
\end{align}
where the constant $c_{\vp,   T}$ depends only $\vp$ and $T$.
\end{proposition}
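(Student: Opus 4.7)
The plan is to perform a first-order Taylor expansion of $\vp$ around $u_s:=(z_s,\mathfrak{s}(\tilde z)_s)\in\R^{n,k+1}$ and then substitute the CCP decompositions of $z$ and $\tilde z$ to read off the claimed delayed-controlled structure. Setting $w^{(0)}_t:=z_t$ and $w^{(i)}_t:=\tilde z_{t-r_i}$ for $1\le i\le k$, so that $u_t=(w^{(0)}_t,\ldots,w^{(k)}_t)$, and denoting by $\nabla_i\vp(u)\in\R^{l,n}$ the block-gradient with respect to the $i$-th $\R^n$-argument, Taylor's formula with integral remainder gives
$$
\vp(u_t)-\vp(u_s)=\sum_{i=0}^{k}\nabla_i\vp(u_s)\,(\delta w^{(i)})_{st}+R_{st},
$$
where $|R_{st}|\le\tfrac12\|\vp''\|_\infty\sum_{i,j=0}^{k}|(\delta w^{(i)})_{st}|\,|(\delta w^{(j)})_{st}|$.

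Inserting $(\delta z)_{st}=\zeta_s(\delta x)_{st}+\rho_{st}$ and, for $1\le i\le k$, the identity $(\delta w^{(i)})_{st}=(\delta\tilde z)_{s-r_i,t-r_i}=\tilde\zeta_{s-r_i}(\delta x)_{s-r_i,t-r_i}+\tilde\rho_{s-r_i,t-r_i}$ (valid because $[s-r_i,t-r_i]\subset[a-r_k,b-r_1]$ when $r_1\le r_i\le r_k$), one identifies the coefficient of $(\delta x)_{st}$ as $\nabla_0\vp(u_s)\zeta_s=\hat\zeta_s$ and the coefficient of $(\delta x)_{s-r_i,t-r_i}$ as $\nabla_i\vp(u_s)\tilde\zeta_{s-r_i}=\hat\zeta_s^{(i)}$; the leftover piece is
$$
\hro_{st}=R_{st}+\nabla_0\vp(u_s)\,\rho_{st}+\sum_{i=1}^{k}\nabla_i\vp(u_s)\,\tilde\rho_{s-r_i,t-r_i}.
$$
This proves (\ref{decompo:ttz}), and $\hz_a=\hat\alpha$ is immediate from $\tilde z_{a-r_k}=\tilde\alpha$.

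It remains to verify that each piece in $\cn[\hz;\cd_{\ka,\hat\alpha}([a,b];\R^{l})]$ is controlled by the right-hand side of (\ref{bnd:t-phi-z}). Boundedness of $\hat\zeta$ and $\hat\zeta^{(i)}$ is immediate from $\|\vp'\|_\infty<\infty$ and $\sup|\zeta|\le\cn[z;\cq_{\ka,\alpha}]$, $\sup|\tilde\zeta|\le\cn[\tilde z;\cq_{\ka,\tilde\alpha}]$. Their $\ka$-H\"older norms follow by applying Proposition \ref{difrul}(i) to the products $\nabla_i\vp(u)\,\tilde\zeta$, combined with the chain-type bound $|\nabla_i\vp(u_t)-\nabla_i\vp(u_s)|\le\|\vp''\|_\infty\sum_{j=0}^{k}|(\delta w^{(j)})_{st}|$, which itself is $\ka$-H\"older in $|t-s|$ with constant controlled by $\cn[z;\cdot]+\cn[\tilde z;\cdot]$. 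For the $2\ka$-H\"older norm of $\hro$, the Taylor piece is bounded by $\|\vp''\|_\infty(\cn[z;\cdot]+\cn[\tilde z;\cdot])^2|t-s|^{2\ka}$, while the $\rho$ and $\tilde\rho$ contributions inherit their $2\ka$-H\"older regularity from the CCP assumption multiplied by $\|\nabla_i\vp\|_\infty$.

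The main bookkeeping obstacle, and the source of the quadratic dependence in (\ref{bnd:t-phi-z}), is that both $R_{st}$ and the Leibniz expansion $\delta(\nabla_i\vp(u)\tilde\zeta)$ produce bilinear terms in the two $\cn$-norms. Collecting all contributions and, where necessary, absorbing a factor $|t-s|^{\ka}\le T^{\ka}$ to pass from $\ka$- to $2\ka$-H\"older estimates on $[a,b]\subset[0,T]$, yields (\ref{bnd:t-phi-z}) with a constant depending only on $\vp$ and $T$.
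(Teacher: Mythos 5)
Your proposal is correct and follows essentially the same route as the paper's proof: a first-order Taylor expansion of $\vp$ around $(z_s,\mathfrak{s}(\tilde z)_s)$ (your $\nabla_i\vp(u_s)$ is the paper's $\psi_s^{(i)}$, your $R_{st}$ its $\hro^2$, and your $\rho$/$\tilde\rho$ terms its $\hro^1$), followed by the same boundedness and $\ka$-H\"older estimates on the densities via the Leibniz rule, and the same identification of the quadratic terms. No substantive differences.
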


\begin{proof} Fix $s,t \in [a,b]$ and set
$$\psi_s^{(i)}=  \left(  \frac{ \partial \vp }{ \partial x_{1,i}}    (z_s, \mathfrak{s}(\tilde{z})_{s})
, \ldots  ,   \frac{ \partial \vp }{ \partial x_{n,i}}    (z_s, \mathfrak{s}(\tilde{z})_{s}) \right). $$
for $i=0, \ldots, k$. It is readily checked that
\begin{eqnarray*}
(\delta\hz)_{st}
&=&
\vp (  z_{t-r_0},  \tilde{z}_{t-r_1}, \ldots, \tilde{z}_{t-r_k}) -\vp (
z_{s-r_0},   \tilde{z}_{s-r_1},  \ldots, \tilde{z}_{s-r_k})
\\ &=&   \psi_s^{(0)} \zeta_{s} (\delta x)_{st} +\sum_{i=1}^{k} \psi_s^{(i)} \tilde{\zeta}_{s-r_i} (\delta x)_{s-r_i, t-r_i}
+\hro_{st}^1 +\hro_{st}^{2},
\end{eqnarray*}
where
\begin{eqnarray*}
\hro_{st}^{1}&=& \psi_s^{(0)} \rho_{st}
+ \sum_{i=1}^{k}  \psi_s^{(i)} \tilde\rho_{s-r_i,t-r_i}, \nonumber\\
\hro_{st}^2&=& \vp (  z_{t-r_0},  \tilde{z}_{t-r_1}, \ldots, \tilde{z}_{t-r_k}) -\vp (  z_{s-r_0}, \tilde{z}_{s-r_1},
\ldots, \tilde{z}_{s-r_k})  \\ &&  \qquad \qquad  \qquad \qquad\qquad \qquad \qquad \qquad
- \psi^{(0)} (\delta z)_{st} -   \sum_{i=1}^{k}  \psi_s^{(i)} (\delta \tilde{z})_{s-r_i,
  t-r_i}.
\end{eqnarray*}

\noindent
{\it (i)} We first have to show that $\hro^{1}, \hro^{2} \in
\cac_{2}^{2\kappa}([a,b]; \R^{l})$.
For the second remainder term Taylor's formula yields
\begin{eqnarray*}
|\hro^2_{st}|&\le&
\frac{1}{2}\|     \vp''\|_{\infty} \left(  | (\delta z)_{st}|^2 + \sum_{i=1}^{k} | (\delta  \tilde{  z})_{s-r_i,t-r_i}|^2 \right),
\end{eqnarray*}
and hence clearly, thanks to some straightforward bounds in the spaces $\cq$, we have
\begin{equation}\label{est_r_1}
\frac{|\hro^2_{st}|} { \, \, |t-s|^{2 \kappa}}\le
\frac12\|   \vp''\|_{\infty}
\lp  \cn^2[z; \cq_{\kappa, \alpha}([a,b]; \R^{n}) ] + \sum_{i=1}^{k}
\cn^2[\tilde{z} ;      \cq_{\kappa, \alpha}([a-r_i,b-r_i]; \R^{n}) ] \rp.
\end{equation}
The first term can also be bounded easily: it can be checked  that
\begin{equation}\label{est_r_2}
\frac{|\hro_{st}^{1}| } { \, \, |t-s|^{2 \kappa}}\le
\| \varphi' \|_{\infty} \, \lp\cn \lc \rho ; \cac_{2}^{2 \kappa}([a,b]; \R^{n})\rc
+  \sum_{i=1}^{k}\cn  \lc  \tilde\rho , \cac_{2}^{2 \kappa}( [a-r_i,b-r_i]; \R^{n}) \rc \rp
\end{equation}
Putting together the last two inequalities, we have shown that decomposition  (\ref{decompo:ttz}) holds, that is
$$
\lp \delta \hz \rp_{st}
=      \psi^{(0)}  \zeta_s (\delta x)_{s, t}  +   \sum_{i=1}^{k} \psi_s^{(i)} \tilde{\zeta}_{s-r_i}^{(i)}  (\delta x)_{s-r_i, t-r_i}      +\hro_{st}$$
with $\hro_{st} =  \hro^1_{st} + \hro^{2}_{st} \in \cac_{2}^{2\kappa}([a,b]; \R^{d}).$

\medskip
\noindent
{\it (ii)} Now we have to consider the ``density'' functions
$$ \hat{\zeta}_{s}= \psi^{(0)}_s \zeta_s , \quad
\hat{\zeta}_{s}^{(i)}= \psi^{(i)}_s \tilde{\zeta}_{s-r_i},
\qquad s \in [a,b] .
$$
Clearly $ \hat{\zeta}, \hat{\zeta}^{(i)}$ are  bounded on $[a,b]$, because
the functions  $\psi^{(i)}$ are bounded (due to the boundedness of $
\varphi'$)  and because $\zeta$, $\tilde{\zeta}^{(i)}$ are also bounded.  In particular, it holds
\begin{eqnarray} \label{est_r_3}
  \sup_{ s \in [a,b]   } |\hat{\zeta}_{s}| \leq \| \varphi'
  \|_{\infty}  \sup_{ s \in [a,b] } |\zeta_{s}|, \qquad  \qquad \sup_{ s \in [a,b]   } |\hat{\zeta}_{s}^{(i)}| \leq \| \varphi' \|_{\infty}  \sup_{ s \in [a,b] } |\tilde{\zeta}_{s-r_i}|
\end{eqnarray} for $i=1, \ldots, k$.
Moreover, for $i=1, \ldots, k$, we have
\begin{align}  \label{est_r_4}  \nonumber
&  |  \hat{\zeta}_{s_1}^{(i)}- \hat{\zeta}_{s_2}^{(i)} | \nonumber
 \\ & \quad  \leq     | (\psi_{s_1}^{(i)} - \psi_{s_2}^{(i)})  \tilde{\zeta}_{s_1-r_i}|
 +   |(\tilde{\zeta}_{s_1-r_i}-   \tilde{\zeta}_{s_2-r_i} )
 \psi_{s_2}^{(i)}) |  \nonumber
 \\ \nonumber & \quad
   \leq  \| \varphi'' \|_{\infty}   |z_{s_1}- z_{s_2}|   \sup_{s \in [a,b]}| \tilde{\zeta}_{s-r_i}|  +
 \| \varphi'' \|_{\infty}   \sum_{j=1}^{k}|
 \tilde{z}_{s_1-r_j}- \tilde{z}_{s_2-r_j}|   \sup_{s \in [a,b]}|
 \tilde{\zeta}_{s-r_i}|      \\ & \qquad   + \nonumber
  \| \psi^{(i)} \|_{\infty}
 |\tilde{\zeta}_{s_1-r_i}- \tilde{\zeta}_{s_2-r_i}|
\\ &  \quad \leq  \|  \varphi'' \|_{\infty}   \ \cn[z ; \cac_{1}^{\kappa}([a,b]; \R^{n})] \,
\sup_{s \in [a,b]}
| \tilde{\zeta}_{s-r_i}|  \,  |s_2-s_1|^{\kappa}   \\ & \qquad  \nonumber    +
  \| \varphi'' \|_{\infty} \sum_{j=1}^{k}    \cn[ \tilde{z} ; \cac_{1}^{\kappa}([a-r_j,b-r_j]; \R^{n}) ] \, \sup_{s \in [a,b]}
| \tilde{\zeta}_{s-r_i}|  \,   |s_2-s_1|^{\kappa}     \\ & \qquad
+       \| \psi^{(i)}
\|_{\infty} \,    \cn   [
\tilde{\zeta} ; \cac_{1}^{\kappa}  (    [a-r_i,b-r_i] ; \R^{n}
) ] \, |s_2-s_1|^{\kappa}.  \nonumber
\end{align}
 Similarly, we obtain
\begin{eqnarray} \nonumber
  | \hat{\zeta}_{s_1}- \hat{\zeta}_{s_2} | & \leq & \| \varphi'' \|_{\infty}   \ \cn[z ; \cac_{1}^{\kappa}([a,b]; \R^{n})] \, \sup_{s \in [a,b]}  \nonumber
| \zeta_{s}| \,  |s_2-s_1|^{\kappa}   \\ && \nonumber  \,  +
  \| \varphi'' \|_{\infty} \sum_{j=1}^{k}    \cn[ \tilde{z} ; \cac_{1}^{\kappa}([a-r_j,b-r_j]; \R^{n}) ] \, \sup_{s \in [a,b]}
| \zeta_{s}|  \,   |s_2-s_1|^{\kappa}     \\ &&  \,
+       \| \psi^{(i)}
\|_{\infty} \,  \cn     [
\zeta ; \cac_{1}^{\kappa}  (    [a,b] ; \R^{n}       ) ] \,|s_2-s_1|^{\kappa}. \label{est_r_5}
\end{eqnarray}
Hence, the densities satisfy the conditions of Definition \ref{def:dcp}.

\medskip
\noindent
{\it (iii)} Finally, combining the estimates  (\ref{est_r_1}),
(\ref{est_r_2}), (\ref{est_r_3}) and (\ref{est_r_4}) yields the estimate  (\ref{bnd:t-phi-z}), which ends the proof.
\end{proof}

\vspace{0.3cm}

We  thus have  proved that the map $T_{\varphi}$ is quadratically bounded
in  $z$ and $\tilde{z}$. Moreover, for fixed  $\tilde{z}$ the map $T_{\varphi}(\cdot,
\tilde{z}): \cq_{\ka,\alpha
  }([a;b];
  \R^{d}) \rightarrow  \cd_{\ka,\hat{\al}
  }([a;b];
  \R^{d})] $
 is locally Lipschitz continuous:

\bigskip

\begin{proposition}\label{compo:ccp-loc-lin}
 Let the notation of Proposition \ref{compo:ccp-phi} prevail. Let $0 \leq a \leq b \leq T$,
let $z^{(1)}, z^{(2)} \in \cq_{\kappa, \alpha}([a,b];\R^n)$ and let
$\tilde{z} \in \cq_{\kappa, \tilde{\alpha}} ([a-r_k, b-r_1]; \R^n)$.
Then,
\begin{align}  \label{contract_2}  &\cn[T_{\varphi}(z^{(1)},
  \tilde{z}) - T_{\varphi}(z^{(2)}, \tilde{z}) ;\cd_{\ka,0
  }([a;b];
  \R^{d})]    \\ & \qquad \qquad \qquad \qquad \le c_{\varphi,T} \,
  \big(1+ C(z^{(1)},z^{(2)}, \tilde{z})\big)^2 \,
  \cn [
z^{(1)}-z^{(2)};\cq_{\ka,\alpha}([a,b];\R^{n})],  \nonumber
\end{align}
where
\begin{eqnarray}
&&C(z^{(1)},z^{(2)}, \tilde{z})=  \cn[ \tilde{z}
;\cq_{\ka,\tilde{\alpha}}([a-r_k,b-r_1];\R^{n})] \nonumber  \\
&&\hskip3.5cm+ \cn[z^{(1)} ;\cq_{\ka,\alpha}([a,b];\R^{n})] + \cn[z^{(2)}
;\cq_{\ka,\alpha}([a,b];\R^{n})]
\label{23bis}
 \end{eqnarray}
and the   constant $c_{\varphi,   T }$ depends only  on $\varphi$ and
$T$.
\end{proposition}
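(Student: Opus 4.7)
The plan is to apply the same decomposition strategy used in the proof of Proposition~\ref{compo:ccp-phi}, but now to the difference $\hat w := T_\varphi(z^{(1)},\tilde z) - T_\varphi(z^{(2)},\tilde z)$. Write $\psi_s^{(i,j)} = \big(\frac{\partial\varphi}{\partial x_{1,i}},\ldots,\frac{\partial\varphi}{\partial x_{n,i}}\big)(z_s^{(j)},\mathfrak{s}(\tilde z)_s)$ for $i=0,\ldots,k$ and $j=1,2$. Subtracting the two decompositions furnished by Proposition~\ref{compo:ccp-phi} gives $\hat w_a = 0$ together with
$$
(\delta\hat w)_{st} = \hat\zeta_s (\delta x)_{st} + \sum_{i=1}^k \hat\zeta_s^{(i)} (\delta x)_{s-r_i,t-r_i} + \hat\rho_{st},
$$
where $\hat\zeta_s = \psi_s^{(0,1)}\zeta_s^{(1)} - \psi_s^{(0,2)}\zeta_s^{(2)}$, $\hat\zeta_s^{(i)} = (\psi_s^{(i,1)}-\psi_s^{(i,2)})\tilde\zeta_{s-r_i}$, and $\hat\rho = (A^{(1)}-A^{(2)}) + (B^{(1)}-B^{(2)})$ with $A^{(j)}_{st} := \psi_s^{(0,j)}\rho^{(j)}_{st} + \sum_{i=1}^{k} \psi_s^{(i,j)}\tilde\rho_{s-r_i,t-r_i}$ and $B^{(j)}_{st}$ the Taylor remainder that appeared as $\hat\rho^{2}_{st}$ in the proof of Proposition~\ref{compo:ccp-phi}, computed with $z^{(j)}$. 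Since $z^{(1)}_a = z^{(2)}_a = \alpha$, we have $\hat w_a = 0$, matching the $\cd_{\ka,0}$-target of the estimate.

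The first task is to bound the density differences. The key identity
$$
\psi^{(i,1)}_s - \psi^{(i,2)}_s = \int_0^1 D^2\varphi\big(z_s^{(2)} + \tau(z_s^{(1)}-z_s^{(2)}),\,\mathfrak{s}(\tilde z)_s\big)\big[z_s^{(1)} - z_s^{(2)},\,\cdot\,\big]\,d\tau
$$
yields $|\psi^{(i,1)}_s - \psi^{(i,2)}_s| \le \|\varphi''\|_\infty |z^{(1)}_s - z^{(2)}_s| \le T^\kappa \|\varphi''\|_\infty \, \cn[z^{(1)}-z^{(2)};\cq_{\ka,\alpha}]$, and a further Taylor expansion using $\|\varphi'''\|_\infty$ (paralleling steps (ii)--(iii) of the earlier proof) gives the corresponding $\kappa$-Hölder bound in which the factor $(1+C(z^{(1)},z^{(2)},\tilde z))$ appears naturally from the combined Hölder norms. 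Applying $ab - cd = (a-c)b + c(b-d)$ to split $\psi^{(0,1)}\zeta^{(1)} - \psi^{(0,2)}\zeta^{(2)}$, and noting that $\zeta^{(1)} - \zeta^{(2)}$ is controlled directly by $\cn[z^{(1)}-z^{(2)};\cq_{\ka,\alpha}]$, all the sup-norm and $\kappa$-Hölder-norm contributions of $\hat\zeta$ and $\hat\zeta^{(i)}$ fit into the desired bound.

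The main obstacle is controlling the remainder. For $A^{(1)} - A^{(2)}$ the same splitting separates the difference into one factor containing $\psi^{(i,1)}-\psi^{(i,2)}$ (of size $\|\varphi''\|_\infty |z^{(1)}_s - z^{(2)}_s|$) multiplied by a $\rho^{(1)}_{st}$ of order $|t-s|^{2\ka}$, plus a factor $\rho^{(1)}_{st} - \rho^{(2)}_{st}$ multiplied by a bounded $\psi^{(i,2)}_s$. For the quadratic piece $B^{(j)}$, I would use the integral representation
$$
B^{(j)}_{st} = \int_0^1 (1-\tau)\,D^2\varphi\big(w^{(j)}_s + \tau(\delta w^{(j)})_{st}\big)\big[(\delta w^{(j)})_{st},(\delta w^{(j)})_{st}\big]\,d\tau,
$$
with $w^{(j)}_t = (z^{(j)}_t,\mathfrak{s}(\tilde z)_t)$, and split $B^{(1)}-B^{(2)}$ into two pieces: one where the two $D^2\varphi$ evaluations differ (bounded by $\|\varphi'''\|_\infty |z^{(1)}_s - z^{(2)}_s|$ times $|(\delta w^{(j)})_{st}|^2$), and one where the bilinear argument is handled via $a\otimes a - b\otimes b = (a-b)\otimes a + b\otimes (a-b)$ with $a = (\delta w^{(1)})_{st}$ and $b = (\delta w^{(2)})_{st}$. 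Each piece factors as $|t-s|^{2\ka}$ times a quantity linear in $\cn[z^{(1)}-z^{(2)};\cq_{\ka,\alpha}]$ and polynomial of degree at most two in $C(z^{(1)},z^{(2)},\tilde z)$.

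Gathering every contribution and absorbing the various Hölder and supremum norms into the quantity $C(z^{(1)},z^{(2)},\tilde z)$ defined in~(\ref{23bis}) produces the announced estimate~(\ref{contract_2}). The only genuinely new analytic input relative to Proposition~\ref{compo:ccp-phi} is the use of $\varphi \in C^3$ (rather than $C^2$), which is exactly what allows one to Taylor-expand the second derivative $D^2\varphi$ in the quadratic remainder; the rest is a careful bookkeeping exercise along the lines already laid out.
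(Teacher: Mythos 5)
Your proposal is correct and follows the same overall strategy as the paper: decompose the difference $T_\varphi(z^{(1)},\tilde z)-T_\varphi(z^{(2)},\tilde z)$ as a delayed controlled path with densities $\hat\zeta_s=\psi_s^{(0,1)}\zeta_s^{(1)}-\psi_s^{(0,2)}\zeta_s^{(2)}$ and $\hat\zeta_s^{(i)}=(\psi_s^{(i,1)}-\psi_s^{(i,2)})\tilde\zeta_{s-r_i}$, represent $\psi^{(i,1)}-\psi^{(i,2)}$ as an integral of $D^2\varphi$ along the segment joining $z^{(2)}_s$ to $z^{(1)}_s$, and use $\varphi\in C^3$ to get the $\ka$-H\"older bounds on these difference densities. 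The one place where you genuinely diverge is the hardest step, the difference of the quadratic Taylor remainders: the paper writes each remainder via first-order mean-value densities $\bar\psi^{(i,j)}_s$ (integral means of $\varphi'$ along the chord from $(z^{(j)}_s,\mathfrak{s}(\tilde z)_s)$ to $(z^{(j)}_t,\mathfrak{s}(\tilde z)_t)$) and then organizes the difference into three terms $Q_1+Q_2+Q_3$, which forces it to introduce and estimate the double-integral quantities $\bar\theta^{(l,i)}_s$. You instead use the exact second-order integral form of the Taylor remainder and split the difference into a piece where the two $D^2\varphi$ evaluations differ (controlled by $\|\varphi'''\|_\infty$ times $|z^{(1)}_s-z^{(2)}_s|+|(\delta(z^{(1)}-z^{(2)}))_{st}|\le c\,\cn[z^{(1)}-z^{(2)}]$) and a piece handled by the bilinear identity $a\otimes a-b\otimes b=(a-b)\otimes a+b\otimes(a-b)$, where $|a-b|=|(\delta(z^{(1)}-z^{(2)}))_{st}|$ supplies the Lipschitz factor and $|a|,|b|$ supply one power of $C(z^{(1)},z^{(2)},\tilde z)$. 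Both routes use $C^3$ in exactly the same essential way and yield the factor $(1+C)^2$; yours is somewhat cleaner bookkeeping for the same estimate, while the paper's version keeps every intermediate object in the first-derivative form $\bar\psi$ that it already set up for Proposition \ref{compo:ccp-phi}.
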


\begin{proof}
Denote $\hz^{(j)}= T_{\sigma} (z^{(j)}, \tilde{z})$ for $j=1,2$.
By Proposition \ref{compo:ccp-phi} we have
\begin{equation*}
\lp \delta \hz^{(j)} \rp_{st}
=  \hat \zeta_s^{(j)} \,  (\delta x)_{st} +
\sum_{i=1}^{k} \hat\zeta_s^{(i,j)}\,  (\delta x)_{s-r_i,t-r_i} +\hro_{st}^{(j)}, \qquad s,t \in [a,b] \end{equation*}
 with
$$ \hat \zeta_s^{(j)} =   \psi_s^{(0,j)} \zeta_{s}^{(j)}, \qquad \hat
\zeta_s^{(i,j)} = \psi_s^{(i,j)}\tilde{\zeta}_{s-r_i}, \qquad s \in
[a,b], $$
where
\begin{eqnarray*}
 \psi_s^{(i,j)} &=&    \left(  \frac{ \partial \vp }{
      \partial x_{1,i}}   (z_s^{(j)}, \mathfrak{s}(\tilde{z})_{s})
 , \ldots  ,  \frac{ \partial \vp }{ \partial x_{n,i}}
  (z_s^{(j)}, \mathfrak{s}(\tilde{z})_{s}) \right)   , \qquad s\in [a,b],
\end{eqnarray*}
 for $i=0, \ldots, k$, $j=1,2$.
Furthermore, it holds $\hat{\rho}_{st}^{(j)}= \hro_{st}^{(1,j)} +
\hro_{st}^{(2,j)},$ where
\begin{align*}
\hro_{st}^{(1,j)}&= \psi_s^{(0,j)} \rho^{(j)}_{st}
+ \sum_{i=1}^{k}  \psi_s^{(i,j)} \tilde{\rho}_{s-r_i,t-r_i} , \nonumber\\
\hro_{st}^{(2,j)}&= \sigma (  z_{t-r_0}^{(j)},  \tilde{z}_{t-r_1}, \ldots, \tilde{z}_{t-r_k}) -\sigma (  z^{(j)}_{s-r_0}, \tilde{z}_{s-r_1},
\ldots, \tilde{z}_{s-r_k})  \\ &  \qquad \qquad  \qquad \qquad\qquad \qquad \qquad \qquad  - \psi^{(0,j)}_s (\delta z^{(j)})_{st} -   \sum_{i=1}^{k}  \psi_s^{(i,j)} (\delta \tilde{z})_{s-r_i,
  t-r_i}.
\end{align*}
Thus, we obtain for $\hat{z}=\hz^{(1)}-\hz^{(2)}$ the decomposition
$$(\der \hat{z})_{st}
=
\sum_{i=0}^{k}\, \hat{\zeta}_s^{(i)}\, (\delta x)_{s-r_i,t-r_i}
+ \hat{\rho}_{st}
$$
with $\hat{\zeta}^{(0)}_s= \psi_s^{(0,1)} \zeta_{s}^{(1)} - \psi_s^{(0,2)}
\zeta_{s}^{(2)}$, the paths $\hat{\zeta}^{(i)}$ are defined by $\hat{\zeta}^{(i)}_s=(
\psi_s^{(i,1)}-\psi_s^{(i,2)})\tilde{\zeta}_{s-r_i}$ for  $ i=1, \ldots, k$, and
$\hat{\rho}_{st}= \hat{\rho}_{st}^{(1)} - \hat{\rho}_{st}^{(2)}$.

\medskip

In the following we will denote constants (which depend only on $T$
and $\varphi$) by $c$, regardless of their value.  For
convenience, we will also use the short notations  $\cn[
\tilde{z}]$, $\cn[z^{(1)}]$, $\cn[z^{(2)}]$ and  $\cn[z^{(1)}-z^{(2)}]$
instead of the corresponding quantities in (\ref{contract_2})-(\ref{23bis}).

\medskip
\noindent
{\it (i)}  We first control the supremum of the density functions
$\zeta^{(i)}$, $i=0, \ldots, k$.
For $i=0$, we can write
\begin{align*}
\hat{\zeta}^{(0)}_s= \psi_s^{(0,1)} ( \zeta_{s}^{(1)} -  \zeta_{s}^{(2)}) +
(\psi_s^{(0,1)}- \psi_s^{(0,2)}) \zeta^{(2)}_s
\end{align*}
and thus it follows
\begin{align} \label{lip_con_T_-1} \nonumber
| \hat{\zeta}_s^{(0)} | & \leq
\|\varphi' \|_{\infty} |\zeta_s^{(1)}- \zeta_s^{(2)}|
| \zeta^{(2)}_s|  +
\|\varphi'' \|_{\infty} |z_s^{(1)}- z_s^{(2)}| \\ & \leq  c \left(1+ \cn[
  z^{(2)}  ] \right) \, \cn[z^{(1)}-z^{(2)}]
\end{align}
Similarly, we get
\begin{align} \label{lip_con_T_0}
| \hat{\zeta}_s^{(i)} |  \leq  c   \,  \cn[
\tilde{z}]   \,  \cn[z^{(1)}-z^{(2)}].
\end{align}

\medskip
\noindent
{\it (ii)} Now, consider the increments of the density functions.
 Here, the key is to expand the expression $\psi_s^{(i,1)}-\psi_s^{(i,2)}$ for $i=
 0, \ldots, k$. For this define
$$ u_s(r)= r(z^{(1)}_s - z^{(2)}_s) + z^{(2)}_s, \qquad r \in[0,1],
\quad s \in [a,b].$$
We have
\begin{eqnarray*}
   \frac{ \partial \vp }{
      \partial x_{l,i}}  (z_s^{(1)}, \mathfrak{s}(\tilde{z})_{s}) - \frac{ \partial \vp }{
      \partial x_{l,i}}  (z_s^{(2)}, \mathfrak{s}(\tilde{z})_{s})&
    =&  \frac{ \partial \vp }{
      \partial x_{l,i}}  (u_s(1), \mathfrak{s}(\tilde{z})_{s}) - \frac{ \partial \vp }{
      \partial x_{l,i}}  (u_s(0), \mathfrak{s}(\tilde{z})_{s}) \\
&     = &     \theta^{(l,i)}_s     (z^{(1)}_s - z^{(2)}_s)  ,
\end{eqnarray*}
where
$$   \theta^{(l,i)}_s   =  \int_{0}^{1} \left(  \frac{ \partial^2 \vp }{
      \partial x_{1,0}\partial x_{l,i}}  (u_s(r), \mathfrak{s}(\tilde{z})_{s}) , \ldots , \frac{ \partial^2 \vp }{
      \partial x_{n,0}\partial x_{l,i}}   (u_s(r), \mathfrak{s}(\tilde{z})_{s}) \right) \,
  dr.$$
Hence it follows
\begin{align} \label{decomp_psi}       \psi_s^{(i,1)}-\psi_s^{(i,2)} =  \left(      \theta^{(1,i)}_s
  (z^{(1)}_s - z^{(2)}_s)  , \ldots,   \theta^{(n,i)}_s
  (z^{(1)}_s - z^{(2)}_s)  \right). \end{align}
Note that $\theta^{(l,i)}$ is clearly bounded and, under the assumption
$\varphi \in C_b^3$, it moreover satisfies:
\begin{align}    |\theta^{(l,i)}_t -                \theta^{(l,i)}_s
  | & \leq   c \, \left(
  \cn[z^{(1)}]     +    \cn[z^{(2)}]     +  \cn[ \tilde{z}]  \right)  \, |t-s|^{\kappa}
.   \label{ich_lauf_noch_amok} \end{align}

\medskip
For $i=0$ we can now write
\begin{align*}
\hat{\zeta}_t^{(0)}-\hat{\zeta}^{(0)}_s &=
\left(\psi_t^{(0,1)} - \psi_s^{(0,1)} \right) ( \zeta_{s}^{(1)} -  \zeta_{s}^{(2)}) +
\psi_t^{(0,1)} \left( (\zeta^{(1)}_t -\zeta_t^{(2)}) - (\zeta^{(1)}_s
  -\zeta_s^{(2)}) \right) \\ & \quad + \left(\psi_s^{(0,1)} -
  \psi_s^{(0,2)} \right) ( \zeta_{t}^{(2)} -  \zeta_{s}^{(2)}) + \zeta_t^{(2)} \left(
(\psi^{(0,1)}_t -\psi_t^{(0,2)}) - (\psi^{(0,1)}_s
  -\psi_s^{(0,2)}) \right).
\end{align*}
It follows
\begin{align} \label{dens_est_a} \nonumber
| \hat{\zeta}_t^{(0)}-\hat{\zeta}^{(0)}_s |  &\leq   c \left( \cn[z^{(1)}]  + \cn[
\tilde{z}]   \right)  \, |t-s|^{\kappa}  \,
\cn[z^{(1)}-z^{(2)}]+  c  \,
\cn[z^{(1)}-z^{(2)}] \, |t-s|^{\kappa}  \\   & \qquad  \nonumber
+ c  \, \cn[z^{(1)}-z^{(2)}] \,   \cn[z^{(2)}] \, |t-s|^{\kappa}
\\  & \qquad  +\cn[z^{(2)}] \left| (\psi^{(0,1)}_t -\psi_t^{(0,2)}) - (\psi^{(0,1)}_s
  -\psi_s^{(0,2)}) \right|  .
\end{align}
Using  (\ref{decomp_psi}) and (\ref{ich_lauf_noch_amok})
we obtain \begin{align}  \label{dens_est_b}
& \left|(\psi^{(0,1)}_t -\psi_t^{(0,2)}) - (\psi^{(0,1)}_s
  -\psi_s^{(0,2)})\right| \\ & \qquad  \leq c     \left( 1+ \cn[z^{(1)}] + \cn[z^{(2)}]  + \cn[
\tilde{z}]  \right)  \, \cn[ z^{(1)}-z^{(2)}] \, |t-s|^{\kappa}.
\nonumber
\end{align}
Combining   (\ref{dens_est_a}) and   (\ref{dens_est_b}) yields
\begin{align}\label{lip_con_T_1}
| \hat{\zeta}_t^{(0)}-\hat{\zeta}^{(0)}_s | & \leq   c \left( 1 + \cn[z^{(1)}] +
  \cn[z^{(2)}] + \cn[
\tilde{z}]  \right)^2  \, \cn[z^{(1)}-z^{(2)}] \,  |t-s|^{\kappa} .
\end{align}
By similar calculations we also have
\begin{align} \label{lip_con_T_2}
| \hat{\zeta}_t^{(i)}-\hat{\zeta}^{(i)}_s | & \leq   c \left( 1 + \cn[z^{(1)}] +
  \cn[z^{(2)}] + \cn[
\tilde{z}]   \right)^2  \, \cn[z^{(1)}-z^{(2)}] \,  |t-s|^{\kappa}
\end{align}
for $i=1, \ldots, k$.

\medskip

\medskip
\noindent
{\it (iii)} Now, we have to control the remainder term $\hat{\rho}$. For this we
decompose $\rho$ as
$$ \hat{\rho}_{st} =\rho_{st}^{(1)} + \rho^{(2)}_{st}, $$ where
\begin{align*}
\rho_{st}^{(1)}&=
\psi_s^{(0,1)}\rho^{(1)}_{st}-\psi_s^{(0,2)}\rho^{(2)}_{st}
+\sum_{i=1}^{k} \left(\psi_s^{(i,1)} -\psi_s^{(i,2)}\right) \tilde{\rho}_{s-r_i,t-r_i} ,  \\
\rho_{st}^{(2)}&= \left( \varphi (  z_{t-r_0}^{(1)},  \tilde{z}_{t-r_1}, \ldots, \tilde{z}_{t-r_k}) -\varphi (  z^{(1)}_{s-r_0}, \tilde{z}_{s-r_1},
\ldots, \tilde{z}_{s-r_k}) \right) \\& \qquad \quad  - \left( \varphi (  z_{t-r_0}^{(2)},  \tilde{z}_{t-r_1}, \ldots, \tilde{z}_{t-r_k}) -\varphi (  z^{(2)}_{s-r_0}, \tilde{z}_{s-r_1},
\ldots, \tilde{z}_{s-r_k}) \right)
  \\ &  \qquad \quad   - \left(  \psi^{(0,1)}_s (\delta
    z^{(1)})_{st} -   \psi^{(0,2)}_s (\delta
    z^{(2)})_{st}      \right) -   \sum_{i=1}^{k} \left(
    \psi_s^{(i,1)}  - \psi_s^{(i,2)}    \right) (\delta \tilde{z})_{s-r_i,
  t-r_i}.
\end{align*}

We consider first $\rho^{(1)}$: for this term, some straightforward calculations yield
\begin{align} \label{lip_con_T_3}
|\rho_{st}^{(1)}| \leq   c ( 1 + \cn[z^{(2)}]+ \cn[\tilde{z}]  )  \, \cn[z^{(1)}-z^{(2)}] \,
|t-s|^{2 \kappa}
 . \end{align}

Now consider $\rho^{(2)}$.  The mean value theorem yields
\begin{align*}
\rho_{st}^{(2)}&=
  \left( \bar{\psi}^{(0,1)}_s  -
    \psi^{(0,1)}_s  \right) (\delta
    z^{(1)})_{st} - \left(  \bar{\psi}^{(0,2)}_s  -\psi^{(0,2)}_s
    \right)  (\delta
    z^{(2)})_{st}       \\ & \qquad  \qquad  +   \sum_{i=1}^{k} \left(
       \left( \bar{\psi}^{(i,1)}_s -   \bar{\psi}^{(i,2)}_s
       \right)- \left( \psi_s^{(i,1)}  - \psi_s^{(i,2)}
       \right)\right) (\delta \tilde{z})_{s-r_i,t-r_i}
\\ & = \left( \bar{\psi}^{(0,1)}_s  -
    \psi^{(0,1)}_s  \right) (\delta
    (z^{(1)} -z^{(2)}) )_{st}    \\
       &  \qquad  \qquad
    + \left( \left( \bar{\psi}^{(0,1)}_s -   \bar{\psi}^{(0,2)}_s
       \right)- \left( \psi_s^{(0,1)}  - \psi_s^{(0,2)}
       \right) \right) (\delta z^{(2)})_{st}       \\
       &   \qquad \qquad+   \sum_{i=1}^{k} \left(
       \left( \bar{\psi}^{(i,1)}_s -   \bar{\psi}^{(i,2)}_s
       \right)- \left( \psi_s^{(i,1)}  - \psi_s^{(i,2)}
       \right)\right) (\delta \tilde{z})_{s-r_i,t-r_i} \\ & \triangleq Q_1+Q_2+Q_3,
\end{align*}
with
\begin{align*}
    \bar{ \psi}_{s}^{(i,j)}   &=  \int_{0}^{1} \left(  \frac{ \partial \vp }{
      \partial x_{1,i}}  ( v^{(j)}_s(r)) , \ldots ,  \frac{ \partial \vp }{
      \partial x_{n,i}}  (v^{(j)}_s(r)) \right)
  dr  , \\
   v^{(j)}_s(r) &= \left( z^{(j)}_{s} + r(z^{(j)}_t - z^{(j)}_s ),
  \tilde{z}_{s-r_1} + r( \tilde{z}_{t-r_1} - \tilde{z}_{s-r_1} ),
  \ldots,  \tilde{z}_{s-r_k} + r( \tilde{z}_{t-r_k} - \tilde{z}_{s-r_k}
  ) \right).
\end{align*}

We shall now bound $Q_1,Q_2$ and $Q_3$ separately: it is readily checked that
$$ |\bar{ \psi}_{s}^{(i,j)}-   \psi_{s}^{(i,j)}| \leq c \left( 1+
  \cn[z^{(1)}] + \cn[z^{(2)}] + \cn[\tilde{z}]  \right) \,
|t-s|^{\kappa},$$
and thus we obtain
\begin{equation} \label{loc_lin_T_4}
Q_1 \leq      c \left( 1+
  \cn[z^{(1)}] + \cn[z^{(2)}] + \cn[\tilde{z}]  \right) \, \cn[z^{(1)}-z^{(2)} ] \,
|t-s|^{2\kappa}.
\end{equation}

In order to estimate $Q_2$ and $Q_3$,
recall that by  (\ref{decomp_psi}) in part {\it(ii)} we have
\begin{align}    \label{decomp_psi_iii}
\psi_s^{(i,1)}-\psi_s^{(i,2)} =  \left(      \theta^{(1,i)}_s
  (z^{(1)}_s - z^{(2)}_s)  , \ldots,   \theta^{(n,i)}_s
  (z^{(1)}_s - z^{(2)}_s)  \right),
\end{align}
where
\begin{align*}
   \theta^{(l,i)}_s   &=  \int_{0}^{1} \left(  \frac{ \partial^2 \vp }{
      \partial x_{1,0}\partial x_{l,i}}    (u_s(r'), \mathfrak{s}(\tilde{z})_{s}) , \ldots ,
      \frac{ \partial^2 \vp }{
      \partial x_{n,0}\partial x_{l,i}}   (u_s(r'), \mathfrak{s}(\tilde{z})_{s}) \right)
  dr' ,  \\
  u_s(r')&= z_{s}^{(1)} + r' (z_s^{(2)}-z_s^{(1)}) .
\end{align*}
Similarly, we also obtain that
\begin{align} \label{decomp_psi_2}       \bar{\psi}_s^{(i,1)}-\bar{\psi}_s^{(i,2)} =  \left(     \bar{\theta}^{(1,i)}_s
  (z^{(1)}_s - z^{(2)}_s)  , \ldots,   \bar{\theta}^{(n,i)}_s
  (z^{(1)}_s - z^{(2)}_s)  \right) \end{align}
with
\begin{eqnarray*}
 \bar{\theta}^{(l,i)}_s   &=&  \int_{0}^{1} \int_{0}^{1}\left(  \frac{ \partial^2 \vp }{
      \partial x_{1,0}\partial x_{l,i}}   (\bar{u}_s(r,r')) , \ldots ,
      \frac{ \partial^2 \vp }{
      \partial x_{n,0}\partial x_{l,i}}   (\bar{u}_s(r,r')) \right)
  dr \, dr'  \\
  \bar{u}_s(r,r')&= &   v_s^{(1)}(r)+
r' \left(v_s^{(2)}(r)- v_s^{(1)}(r)        \right).
\end{eqnarray*}
Now, using (\ref{decomp_psi_iii}) and (\ref{decomp_psi_2}) we can   write
\begin{align*}
 & \left( \bar{\psi}^{(i,1)}_s -   \bar{\psi}^{(i,2)}_s
       \right)- \left( \psi_s^{(i,1)}  - \psi_s^{(i,2)}
       \right)  \\ & \qquad \qquad   = \left( (\bar{\theta}_s^{(1,i)}-
         \theta_s^{(1,i)}) (z^{(1)}-z^{(2)})  , \ldots ,     (\bar{\theta}_s^{(n,i)}-
         \theta_s^{(n,i)}) (z^{(1)}-z^{(2)}  \right)
\end{align*} for any
$i=0,\ldots,k$.
Since moreover
\begin{align*}
&\bar{u}_s(r,r')- (u_s(r'), \mathfrak{s}(\tilde{z})_s) \\ & \quad= r\left( ( z^{(1)}_t-z^{(1)}_s) + r'
    (z_{t}^{(2)}-z_{s}^{(2)}- (z_{t}^{(1)}-z_{s}^{(1)}  )),
    \tilde{z}_{t-r_1}- \tilde{z}_{s-r_1}, \ldots,     \tilde{z}_{t-r_k}- \tilde{z}_{s-r_k}       \right),
\end{align*}
another Taylor expansion yields
$$  |\bar{\theta}_s^{(l,i)}-
         \theta_s^{(l,i)})| \leq   c \, \left( \cn[z^{(1)}]    +  \cn[z^{(2)}]
         +  \cn[ \tilde{z}]  \right) \,  |t-s|^{\kappa} .    $$
Hence, we obtain
\begin{align}\label{loc_lin_T_5}
& \left| \left( \bar{\psi}^{(i,1)}_s -   \bar{\psi}^{(i,2)}_s
       \right)- \left( \psi_s^{(i,1)}  - \psi_s^{(i,2)}
       \right) \right|  \nonumber  \\ & \qquad \qquad  \leq c \, \left( \cn[z^{(1)}]    +  \cn[z^{(2)}]
         +  \cn[ \tilde{z}] \right) \,  \cn[z^{(1)}-z^{(2)}]  \, |t-s|^{\kappa},
\end{align}
from which suitable bounds for $Q_2$ and $Q_3$ are easily deduced.
Thus it follows by  (\ref{loc_lin_T_4}) and (\ref{loc_lin_T_5}) that
 \begin{align*}
|\rho_{st}^{(2)}|&  \leq c \, \left(1+ \cn[z^{(1)}]    +  \cn[z^{(2)}]
         +  \cn[ \tilde{z}] \right)^2 \,  \cn[z^{(1)}-z^{(2)}]  \,
       |t-s|^{2\kappa}. \end{align*}
Combining this estimate with (\ref{lip_con_T_3}) we finally have
\begin{align} \label{loc_lin_T_6}
|\rho_{st}|&  \leq c \, \left(1+ \cn[z^{(1)}]    +  \cn[z^{(2)}]
         +  \cn[ \tilde{z}] \right)^2 \,  \cn[z^{(1)}-z^{(2)}]  \,
       |t-s|^{2\kappa}. \end{align}

\medskip
\noindent
{\it (iv)}  The assertion follows now from  (\ref{lip_con_T_-1}),
(\ref{lip_con_T_0}), (\ref{lip_con_T_1}),
(\ref{lip_con_T_2}) and  (\ref{loc_lin_T_6}).

\end{proof}


\subsection{Integration of delayed controlled paths (DCP)}

The aim of this section is to define the integral $\cj(m^{*} dx)$,
where $m$ is  a delayed controlled path
$m\in\cd_{\ka,\al}([a,b];\R^{d})$.
Here we denote by  $A^{*}$  the
transposition of a vector or matrix $A$ and by
 $A_1 \cdot A_2 $ the inner product of two vectors or
two matrices $A_1$ and $A_2$. We will also write $\cq_{\kappa,\alpha}$ (resp.
$\mathcal{D}_{\kappa,\alpha}$) instead
of $\cq_{\kappa,\alpha}([a,b];V)$ (resp. $\mathcal{D}_{\kappa, \alpha}([a,b];V)$)
if there is no risk of confusion about $[a,b]$ and $V$.

Note that if the increments of $m$ can be expressed like in (\ref{decompo:dcp}),
 $m^{*}$ admits the decomposition
\begin{equation}\label{dcp:delta-m}
(\delta m^{*})_{st}
= \sum_{i=0}^{k} (\delta x)^{*}_{s-r_i,t-r_i} {\zeta^{(i) *}_s} + \rho^{*}_{st},
\end{equation}
where
$\rho^{*}\in\cac_2^{2\ka}([a,b];\R^{1,d})$  and the densities $\zeta^{(i)}$,
$i=0, \ldots, k$ satisfy
the conditions of Definition \ref{def:dcp}.

 To illustrate the structure of the integral of a DCP, we first assume
 that  the paths $x,\zeta^{(i)}$ and $\rho$ are smooth, and  we express
$\cj(m^{*} dx)$ in terms of the operators $\delta$
and $\laa$. In this case, $\cj(m^{*} dx)$ is well defined, and
we have
$$
\ist m_u^{*} dx_u = m_s^{*}(x_t-x_s) + \ist(m_u^{*}-m_s^{*}) dx_u
$$
for $a\leq s\le t \leq b$, or in other words
\begin{equation}\label{exp1:imdx}
\cj(m^{*} \,dx)= m^{*}\,\der x + \cj(\der m ^{*} \, dx).
\end{equation}
Now consider the term $\cj(\der m^{*} \, dx)$:   Using the
 decomposition (\ref{dcp:delta-m}) we
obtain
\begin{equation}\label{exp1:idmdx}
\cj(\der m^{*} \, dx)
=
\ist \lp \sum_{i=0}^{k} \, (\der x)_{s-r_i,u-r_i}^* \zeta_{s}^{(i) *}+\rho_{su}^{*} \rp dx_u
=
A_{st}+\cj_{st}(\rho^{*}\, dx)
\end{equation}
with
$$
A_{st}=\sum_{i=0}^{k}
\ist  (\der x)_{s-r_i,u-r_i}^* \zeta_{s}^{(i)*} dx_u.
$$
 Since, for the moment, we are dealing with smooth paths, the density $\zeta^{(i)}$ can be taken
out of the integral above, and we have
$$
A_{st}=\sum_{i=0}^{k}   \zeta_{s}^{(i)} \cdot  \xdst(-r_i),
$$
with the $d \times d$ matrix $\xdst(v)$ defined by
$$ \xdst(v)= \left( \ist \left(\int_{s+ v }^{u+v} dx_w \right) dx_u^{(1)}
  \, , \,
\ldots \, , \,  \ist \left(\int_{s+ v }^{u+v} dx_w \right) dx_u^{(d)} \right),
\qquad 0 \leq s \leq t \leq T
$$ for $v \in \{-r_k,\ldots,-r_0\}$. Indeed, we can write
\begin{eqnarray*}
\int_s^t (\delta x)^*_{s-r_i,u-r_i}\zeta^{(i)*}_s dx_u &= &\int_s^t \zeta^{(i)}_s\cdot
[(\delta x)_{s-r_i,u-r_i}\otimes dx_u] \\
&=& \zeta^{(i)}_s\cdot\int_s^t (\delta x)_{s-r_i,u-r_i}\otimes
dx_u=\zeta^{(i)}_s\cdot\xdst(-r_i).
\end{eqnarray*}
Inserting
the expression of $A_{st}$ into  (\ref{exp1:imdx}) and (\ref{exp1:idmdx})
we obtain
\begin{equation}\label{exp2:imdx}
\cj_{st}(m^{*}\, dx)
=
m_s^{*} (\der x)_{st} + \sum_{i=0}^{k}    \zeta_{s}^{(i)}  \cdot \xdst(-r_i)
+ \cj_{st}(\rho^{*}\, dx)
\end{equation}
for $ a\leq s \leq t \leq b$.
\vspace{0.3cm}

Let us  now consider  the L\'evy area term $\xdst(-r_i)$.  If $x$ is a smooth
path, it is readily checked that
$$
[\der\xd(-r_i)]_{sut}
=\bx_{st}^{\bf 2}(-r_i)-\bx_{su}^{\bf 2}(-r_i)-\bx_{ut}^{\bf 2}(-r_i)
=(\der x)_{s-r_i,u-r_i}\otimes(\der x)_{ut},
$$ for any $i=0,\ldots,k$.
This decomposition of $\der\xd(-r_i)$ into a product of increments is
the fundamental algebraic property we will use  to extend
the above integral to non-smooth paths.  Hence, we will need the
  following assumption:

\begin{hypothesis}\label{hyp:x}
The path $x$ is a $\R^d$-valued
$\ga$-H\"older continuous function with $\ga>\frac13$ and  admits a delayed L\'evy area,
i.e., for all $v\in\{-r_k,\ldots,-r_0\}$, there exists  a path
$\xd(v)\in\cac_2^{2\ga}([0,T];\R^{d,d})$,
which satisfies
\begin{equation}\label{cdf}
\der\xd(v)=\der x^v\otimes \der x,
\end{equation}
that is
$$
\lc (\der\xd(v))_{sut} \rc(i,j)
=
[\der x^{i}]_{s+v,u+v} [\der x^{j}]_{ut} \qquad  \textrm{for all} \qquad
s,u,t\in\ott, \quad i,j\in\{1,\ldots,d  \}.
$$ In the above formulae,
we have set $x^v$ for the shifted path $x^v_s=x_{s+v}$.
\end{hypothesis}

To finish the analysis
of the smooth case it remains to find a suitable expression for
$\cj(\rho^{*} \,dx)$. For this,  we write (\ref{exp2:imdx})
as
\begin{equation}\label{exp1:irhodx}
\cj_{st}(\rho^{*} \,dx)=
\cj_{st}(m^{*}\, dx)
-
m_s^{*} (\der x)_{st}
- \sum_{i=0}^{k} \zeta_s^{(i)}\cdot \xdst(-r_i)
\end{equation} and we apply  $\der$ to both  sides of the above equation.
For smooth paths $m$ and $x$
we have
$$
\der(\cj(m^{*}\, dx))=0,
\qquad \qquad
\der(m^{*}\,\der x)= - \der m^{*}\, \der x,
$$
by Proposition \ref{difrul}.
Hence, applying these relations to the right hand side of
(\ref{exp1:irhodx}), using the decomposition (\ref{dcp:delta-m}) and
again Proposition \ref{difrul},
we obtain
\begin{align*}
&[\der(\cj(\rho^{*}\, dx))]_{sut}\\
&=
(\der m^{*})_{su} (\der x)_{ut}
+\sum_{i=0}^{k} (\der\zeta^{(i)})_{su} \cdot \xdst(-r_i)
-\sum_{i=0}^{k} \zeta^{(i)}_{s}\cdot (\delta \xd(-r_i))_{sut} \\
&=\sum_{i=0}^{k}    (\der x)_{s-r_i,u-r_i}^{*}   \zeta_s^{(i) * }  (\der x)_{ut}
+\rho_{su}^{*}(\der x)_{ut}\\
&\hspace{5cm}
+\sum_{i=0}^{k} (\der\zeta^{(i)})_{su} \cdot \xdst(-r_i)
-\sum_{i=0}^{k} \zeta^{(i)}_{s}\cdot [    (\delta x)_{s-r_i,t-r_i}      \otimes     (\delta x)_{ut}   ]
\\
&=\rho_{su}^{*}(\der x)_{ut}+\sum_{i=0}^{k} (\der\zeta^{(i)})_{su} \cdot \xdst(-r_i).
\end{align*}
In summary, we have derived  the representation
$$
\der[\cj(\rho^{*}\, dx)]=\rho^{*}\, \der x + \sum_{i=0}^{k}
\der\zeta^{(i)} \cdot \xd(-r_i),
$$
for  two regular paths $m$ and $x$.

If $m, x, \zeta^{(i)}$, $i=0, \ldots, k$ and $\xd$ are smooth enough, we have
$\der[\cj(\rho^{*} \, dx)]\in\cz\cac_3^{1+}$ and thus  belongs to the domain
 of $\laa$   due  to
Proposition \ref{prop:Lambda}.  (Recall that $\der\der=0$.) Hence, it follows
$$
\cj(\rho^{*}\, dx)
=
\laa\lp \rho^{*}\, \der x  + \sum_{i=0}^{k}
\der\zeta^{(i)} \cdot \xd(-r_i) \rp,
$$ and
 inserting this identity into (\ref{exp2:imdx}), we end up with
\begin{equation}\label{exp3:imdx}
\cj(m^{*}\, dx)
=
m^{*} \der x
+ \sum_{i=0}^{k}
\zeta^{(i)} \cdot \xd(-r_i)
+ \laa\lp \rho^* \der x +  \sum_{i=0}^{k}
\der\zeta^{(i)} \cdot \xd(-r_i) \rp.
\end{equation}

\vspace{0.3cm}

The expression above can be generalised to the non-smooth case,
since $\cj(m^{*}\, dx)$ has been expressed only  in terms of increments of $m$
and $x$. Consequently, we will use (\ref{exp3:imdx}) as  the definition
for our extended integral.

\medskip

\begin{proposition}\label{intg:mdx}
For fixed $\frac13<\ka<\ga$,
let $x$ be a path satisfying Hypothesis \ref{hyp:x}. Furthermore,  let
$m\in\cd_{\ka,\hat{\alpha}}([a,b];\R^{d})$ such that the
increments of $m$ are given by (\ref{decompo:dcp}).
Define $z$ by $z_a=\alpha$ with $\alpha \in \R$  and
\begin{equation}\label{dcp:mdx}
(\der z)_{st}=
m_s^* ( \der x)_{st}
+ \sum_{i=0}^{k}
\zeta^{(i)}_s \cdot \xdst(-r_i)
+ \laa_{st}\lp \rho^* \der x +  \sum_{i=0}^{k}
\der\zeta^{(i)} \cdot \xd(-r_i) \rp
\end{equation}
for $a \leq s \leq t \leq b$.
Finally,
 set
 \begin{equation}
\cj(m^*\, dx) = \der z.
\end{equation}
Then:
\begin{enumerate}
\item
$\cj(m^*\, dx) $ coincides with the usual Riemann integral, whenever
$m$ and $x$ are smooth functions.
\item
$z$ is well-defined as an element of $\cq_{\ka,\alpha}([a,b];\R)$ with
decomposition $\der z= m^*\der x+\hro$,  where
$\hro\in\cac_2^{2\ka}([a,b]; \R)$
is given  by
$$
\hro=  \sum_{i=0}^{k}
\zeta^{(i)} \cdot \xd(-r_i)
+\laa\lp \rho^* \der x +  \sum_{i=0}^{k}
\der\zeta^{(i)} \cdot \xd(-r_i) \rp.
$$
\item
The semi-norm of $z$ can be estimated as
\begin{equation}\label{bnd:norm-imdx}
\cn[z;\cq_{\ka,\alpha}([a,b];\R)]\le \|m\|_\infty +
c_{ \it \textrm{int}} (b-a)^{\ga-\ka}\cn [m;\cd_{\ka,\hat{\alpha}}([a,b];\R^{d})]
\end{equation}
 where
$$
c_{ \it \textrm{int}} = c_{\kappa, \gamma, \varphi, T} \left(
\|x\|_\gamma + \sum_{i=0}^k \|\xd(-r_i)\|_{2\gamma}
\right)
$$
with the constant   $c_{\kappa, \gamma, \varphi, T}$ depending only on
$\kappa, \gamma, \varphi$ and $T$. Moreover,
\begin{equation}\label{bnd:norm-imdx-2}
\|z\|_\ga
\leq
 c_{ \it \textrm{int}} (b-a)^{\ga-\ka}\cn [m;\cd_{\ka,\hat{\alpha}}([a,b];\R^{d})].
\end{equation}

\item
It holds
\begin{equation}\label{rsums:imdx}
\cj_{st}(m^*\, dx)
=\lim_{|\Pi_{st}|\to 0}\sum_{i=0}^N
\lc m^*_{t_{i}}(\der x)_{t_{i}, t_{i+1}}
+ \sum_{j=0}^{k} \zeta^{(j)}_{t_{i}}  \cdot
{\bf x}^{\bf 2}_{t_{i}, t_{i+1}}(-r_j) \rc
\end{equation}
for any $a\le s<t\le b$,
where the limit is taken over all partitions
$\Pi_{st} = \{s=t_0,\dots,t_N=t\}$
of $[s,t]$, as the mesh of the partition goes to zero.
\end{enumerate}
\end{proposition}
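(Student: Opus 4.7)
The plan is to take (\ref{dcp:mdx}) as the definition of $\cj(m^*dx)$ and to verify in turn that every ingredient is well-defined, that the resulting $z$ has the announced controlled-path structure, and finally that the Riemann-sum representation (\ref{rsums:imdx}) and the compatibility with smooth integration follow from the general machinery of Section \ref{sec:algebraic-intg}.

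The preliminary step, on which the rest of the argument rests, is to check that
$$
Q \;:=\; \rho^{*}\,\delta x \;+\; \sum_{i=0}^{k}\delta\zeta^{(i)}\cdot\xd(-r_i)
$$
belongs to $\cz\cac_3^{1+}([a,b];\R)$, so that $\laa Q$ is meaningful by Proposition \ref{prop:Lambda}. The H\"older exponents are immediate: $\rho^{*}\,\delta x\in\cac_3^{2\ka+\ga}$ and $\delta\zeta^{(i)}\cdot\xd(-r_i)\in\cac_3^{\ka+2\ga}$, and both exponents exceed $1$ thanks to the standing assumptions on $\ka$ and $\ga$. The vanishing $\delta Q=0$ is the main obstacle: applying $\delta$ to the decomposition (\ref{dcp:delta-m}) and using $\delta\delta m^{*}=0$ together with Proposition \ref{difrul} yields
$$
(\delta\rho^{*})_{suv}\;=\;\sum_{i=0}^{k}(\delta x)^{*}_{u-r_i,v-r_i}\,(\delta\zeta^{(i)*})_{su};
$$
by Proposition \ref{difrul}(iv) this gives $\delta(\rho^{*}\delta x)_{suvt}=-(\delta\rho^{*})_{suv}(\delta x)_{vt}$, while a direct computation starting from $\delta\delta\zeta^{(i)}=0$ and invoking the algebraic identity $\delta\xd(-r_i)=\delta x^{-r_i}\otimes\delta x$ of Hypothesis \ref{hyp:x} produces
$$
\delta\bigl(\delta\zeta^{(i)}\cdot\xd(-r_i)\bigr)_{suvt}\;=\;(\delta\zeta^{(i)})_{su}\cdot\bigl[(\delta x)_{u-r_i,v-r_i}\otimes(\delta x)_{vt}\bigr].
$$
A careful book-keeping of the transpositions and of the matrix inner product $\cdot$ shows that these two contributions cancel termwise, yielding $\delta Q=0$ and hence $Q\in\cz\cac_3^{1+}$.

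Once $\laa Q$ is under control, items 2 and 3 are routine. By construction $(\delta z)_{st}=m_{s}^{*}(\delta x)_{st}+\hat\rho_{st}$ with $\hat\rho_{st}=\sum_{i}\zeta_{s}^{(i)}\cdot\xdst(-r_i)+\laa_{st}(Q)$, and the two summands of $\hat\rho$ lie in $\cac_2^{2\ga}$ and $\cac_2^{2\ka+\ga}$ respectively, hence in $\cac_2^{2\ka}$; combined with the obvious bound on $\|\delta m^{*}\|_{\ka}$ extracted from $\cn[m;\cd_{\ka,\hat\alpha}]$, this identifies $z$ as a classical controlled path and accounts for the term $\|m\|_\infty$ (the supremum of the density $m^{*}$) in (\ref{bnd:norm-imdx}). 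The remaining pieces of (\ref{bnd:norm-imdx}) and (\ref{bnd:norm-imdx-2}) are produced by bounding each summand of $\delta z$ and $\hat\rho$ using the contraction estimate (\ref{ineqla}) for the $\laa$ piece and the trivial inequality $|t-s|^{\mu-\ka}\le (b-a)^{\mu-\ka}$ for $\mu\ge\ka$, the exponent $\ga-\ka$ arising from the minimum available excess.

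Finally, items 1 and 4 follow from the general framework. For item 4, set $g_{st}=m_{s}^{*}(\delta x)_{st}+\sum_{j}\zeta_{s}^{(j)}\cdot\xdst(-r_j)$; repeating the algebraic computation used for $\delta Q=0$ gives $\delta g=-Q\in\cac_3^{1+}$, so $g$ satisfies the hypothesis of Corollary \ref{cor:integration}. Since $\delta z=g+\laa Q=(\id-\laa\delta)g$, that corollary yields the Riemann-sum limit (\ref{rsums:imdx}). For item 1, when $m$ and $x$ are smooth all the formal manipulations leading to (\ref{exp3:imdx}) are rigorous (product rules for $\delta$, and the identity $\laa\delta g=g$ for smooth-enough $g$), so $\cj(m^{*}dx)$ defined by (\ref{dcp:mdx}) coincides in that case with the ordinary Riemann integral $\int_s^{t} m^{*}_{u}dx_{u}$.
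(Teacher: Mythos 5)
Your proposal is correct and follows essentially the same route as the paper: the central step in both is to show that $Q=\rho^{*}\,\delta x+\sum_{i}\delta\zeta^{(i)}\cdot\xd(-r_i)$ lies in $\cz\cac_3^{1+}$ via the algebraic cancellation forced by $\delta\delta m^{*}=0$ and Hypothesis \ref{hyp:x}, after which the H\"older bounds, the application of $\laa$ with (\ref{ineqla}), and the Riemann-sum representation via $\delta z=(\id-\laa\delta)g$ and Corollary \ref{cor:integration} proceed exactly as in the paper. The only cosmetic difference is that you expand $\delta\rho^{*}$ from $\delta\delta m^{*}=0$ and cancel termwise, whereas the paper factors out $\delta x$ and recognizes the bracket as $\delta(\delta m^{*})$; both are the same computation.
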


\vspace{0.3cm}

\begin{proof}
(1) The first of our claims is a direct consequence of the derivation of
equation (\ref{exp3:imdx}).

\medskip
\noindent
(2)
Set
$
c_x=  \|x\|_\ga 
+ \sum_{i=0}^k \|\xd(-r_i)\|_{2\gamma}.
$
Now  we show that equation (\ref{dcp:mdx}) defines a classical controlled
path.  Actually, the term $m^*\, \der x$ is trivially of the desired form
for an element of $\cq_{\ka,\alpha}$.  So consider  the term
$  h^{(1)}_{st}=   \sum_{i=0}^{k}
\zeta^{(i)}_s \cdot \xdst(-r_i)    $ for $a \leq s \leq t \leq b$.
We have
\begin{align*}
|h^{(1)}_{st}|
&\le  \sum_{i=0}^{k}
\|\zeta^{(i)}\|_\infty  |\xdst(-r_i)|
\le  \left(  \sum_{i=0}^{k}
 \|\zeta^{(i)}\|_\infty \right )
 c_x |t-s|^{2\ga} \\
& \le  \left(  \sum_{i=0}^{k}
 \|\zeta^{(i)}\|_\infty \right )  c_x
(b-a)^{2(\ga-\ka)}|t-s|^{2\kappa}  .
\end{align*}
Thus
\begin{align*}
\| h^{(1)}\|_{2\ka} \leq   c_x
(b-a)^{2(\ga-\ka)}   \cn[m;\cd_{\ka, \hat{\alpha}}([a,b];\R^d)].
\end{align*}
The term
$$ h^{(2)} = \rho^* \der x +  \sum_{i=0}^{k}
\der\zeta^{(i)} \cdot {\bf x}^{\bf 2}(-r_i)  $$
satisfies $ \delta h^{(2)} =0$. Indeed, we can write
\begin{align*}
\der h^{(2)}&=\delta \rho^*\delta x + \sum_{i=0}^k \delta
\zeta^{(i)}\cdot \der {\bf x}^{\bf 2}(-r_i)
\end{align*}
by Proposition \ref{difrul} and because $\delta\delta=0$.
Applying (\ref{cdf}) to the right hand side of the above equation it follows that
\begin{multline*}
\der h^{(2)}=\delta \rho^*\delta x + \sum_{i=0}^k \delta
\zeta^{(i)}\cdot (\der x^{-r_i}\otimes\der x)
=\delta \rho^*\delta x + \sum_{i=0}^k \delta {x^{-r_i}}^* \delta \zeta^{(i)*}\der x\\
=\big(\delta\rho^* + \sum_{i=0}^k \delta {x^{-r_i}}^* \delta
\zeta^{(i)*}\big)\delta x.
\end{multline*}
However, due to Proposition \ref{difrul}, it holds
\begin{align*}
\der h^{(2)}=\big(\delta\rho^* + \sum_{i=0}^k \delta {x^{-r_i}}^* \delta
\zeta^{(i)*}\big)\delta x
=\delta\big(\rho^* + \sum_{i=0}^k \delta {x^{-r_i}}^*
\zeta^{(i)*}\big)\delta x.
\end{align*}
Since the increments of $m$ are given by (\ref{decompo:dcp}) we finally obtain that
\begin{align*}
 \der h^{(2)}=\der(\der m^*)\der x =0.
\end{align*}

Moreover, recalling the notation (\ref{eq:notation-norm-C3}), it holds
$$ \|\rho^* \der x\|_{2\ka,\ka} \leq   c_x (b-a)^{\gamma - \kappa}
\|\rho\|_{2\ka}
$$
and
$$
\|\sum_{i=0}^k\der\zeta^{(i)}\cdot {\bf x}^{\bf 2}(-r_i) \|_{\ka,2 \ka}
 \leq   c_x (b-a)^{2(\gamma - \kappa)}   \sum_{i=0}^{k}
\|\zeta^{(i)}\|_\ka. $$
Since $\gamma > \kappa > \frac13$ and $\delta h^{(2)}=0$, we have
$     h^{(2)} \in    \dom(\laa)   $ and
$$     \|h^{(2)} \|_{3 \kappa} \leq  c_x(1+ T^{\gamma - \kappa})
(b-a)^{\gamma - \kappa} \cn[m;\cd_{\ka, \hat{\alpha}}([a,b];\R^d)].$$
By Proposition \ref{prop:Lambda} it follows
$$  \|   \Lambda( h^{(2)}) \|_{ 3 \kappa } \leq \frac{1}{2^{3 \kappa} -2} \|
h^{(2)} \|_{3 \kappa} $$
and we finally obtain
\begin{align} \label{est_int_1}
      \|    h^{(1)} - \Lambda(h^{(2)}) \|_{2 \kappa}
 \leq   c_x \frac{2^{3 \kappa} -1 }{2^{3 \kappa} -2}(1+ T^{\gamma - \kappa})
(b-a)^{\ga-\ka} \cn[m;\cd_{\ka,\alpha}].
\end{align}

Thus we have proved that
$\hat{\rho}\in\cac_2^{2\ka}([a,b];\R)$
and hence that $z\in\cq_{\ka,\alpha}([a,b];\R)$.

\medskip
\noindent
(3)
Because of
$(\der z)_{st}=m^*_s ( \der x)_{st} + \hat{\rho}_{st}$
and $m \in \cd_{\ka, \hat{\alpha}}([a,b];\R^d)$
the estimates (\ref{bnd:norm-imdx}) and (\ref{bnd:norm-imdx-2})
 now follow from (\ref{est_int_1}).

\medskip
\noindent
(4)
By Proposition \ref{difrul} {\it(ii)} and the decomposition  (\ref{decompo:dcp}) we have that
\begin{align*}
\delta (m^* \delta x)_{sut}= -(\delta m^*)_{su} (\delta x)_{ut}  = - \rho^*_{su} (\delta x)_{ut} -
 \sum_{i=0}^{k}  (\delta
  x)_{s-r_i,u-r_i}^{*} \zeta^{(i)*}_{u}   (\delta x)_{ut}.
\end{align*}
Thus, applying again Proposition \ref{difrul} {\it(ii)}, and recalling Hypothesis \ref{hyp:x}
for the L\'evy area, we obtain that
$$
\der\lp m^*\, \der x+ \sum_{i=0}^{k} \zeta^{(i)} \cdot \xd(-r_i)  \rp
=
-\lc \rho^*\, \der x + \sum_{i=0}^{k} \delta \zeta^{(i)} \cdot \xd(-r_i) \rc.
$$
Hence, equation (\ref{dcp:mdx}) can also be written as
$$
\cj(m^*\, dx)=\lc \id-\laa\der \rc
\lp m^*\, \der x+ \sum_{i=0}^{k} \zeta^{(i)} \cdot \xd(-r_i) \rp,
$$
and a direct application of Corollary \ref{cor:integration} yields
(\ref{rsums:imdx}), which ends our proof.

\end{proof}

\medskip

Recall that the notation $A^*$ stands for the transpose of a matrix $A$.
Moreover, in the sequel, we will denote by $c_{norm}$ a constant, which
depends only on the chosen norm of $\R^{n,d}$.
Then, for a matrix-valued delayed controlled path $m\in\cd_{\ka,\hat{\alpha}}([a,b];\R^{n,d})$, the
integral $ \mathcal{J}(m \, dx )$ will be defined by
$$
 \mathcal{J}(m \, dx ) =  \left( \mathcal{J}(m^{(1)*} dx), \ldots, \mathcal{J}(m^{(n)*}dx) \right)^*,
$$
where $m^{(i)} \in\cd_{\ka,\hat{\alpha}}([a,b];\R^{d})$ for $i=1, \ldots, n$
and we have set $m=(m^{(1)}, \ldots,m^{(n)})^*$.
Then we have  by (\ref{bnd:norm-imdx}) that
\begin{align}\label{bnd:norm-imdx2}
& \cn[ \mathcal{J}(m \, dx );\cq_{\ka,\alpha}([a,b];\R^{n})]
\\ & \qquad \qquad  \qquad \le
   c_{norm} \left(\|m\|_\infty +   c_{\textrm{\it int}}
     (b-a)^{\ga-\ka}\cn   [m;\cd_{\ka,\hat{\alpha}}([a,b];\R^{n,d}) ]
   \right)  . \nonumber
\end{align}

For two paths  $m^ {(1)}, m^{(2)} \in\cd_{\ka,\hat{\alpha}}([a,b];\R^{n,d})$ we
obtain the following estimate for the difference of $z^{(1)}
=\mathcal{J}(m^{(1)} \, dx )$ and  $z^{(2)}
=\mathcal{J}(m^{(2)} \, dx )$:
As above, we have clearly
\begin{align*}
\cn[z^{(1)}-z^{(2)};\cq_{\ka,0}([a,b];\R^{n})] & \le
     c_{norm} \|m^{(1)} -m^{(2)}\|_\infty  \\ & \qquad +      c_{norm}\,
   c_{\textrm{\it int}}  (b-a)^{\ga-\ka}\cn \left
  [m^{(1)}-m^{(2)};\cd_{\ka,0}([a,b];\R^{n,d}) \right]  .
\end{align*}
However, since $m_a^{(1)} = m_a^{(2)}$ it follows
\begin{equation}\label{contract}
\cn[z^{(1)}-z^{(2)};\cq_{\ka,0}([a,b];\R^{n})]\le   2   c_{norm} \, c_{
  \textrm{\it int}}  (b-a)^{\ga-\ka}\cn \left
  [m^{(1)}-m^{(2)};\cd_{\ka,0}([a,b];\R^{n,d}) \right]  .
\end{equation}

\bigskip
\medskip

\section{Solution to the delay equation}

With the  preparations of the last section, we can now solve the equation
\begin{align}
\left\{\begin{array}{lr}
 dy_t = \sigma(y_t,y_{t-r_1}, \ldots, y_{t-r_k}) \, d x_t,  &  \qquad  t \in
 [0,T], \\ {} \,\,\,
y_t = \xi_t,  &\qquad t \in [-r,0], \end{array}
\right. \label{dde3}
\end{align}
 in the class of classical controlled
paths.  For this, it will be crucial
to use mappings of the type
$$ \Gamma :
\cq_{\kappa, \alpha}([a,b];\R^n)\times\cq_{\kappa, \tilde{\alpha}} ([a-r_k,
b-r_1]; \R^d) \rightarrow \cq_{\kappa, \alpha}([a,b];\R^n)$$
for $0 \leq a \leq b \leq T$, which are
defined by $(z,\tilde{z}) \mapsto \hz$, where $\hz_0=\al$ and $\der\hz$ given by
$\der\hz=\mathcal{J}(T_{\sigma}(z, \tilde{z}))$,
with $T_{\sigma}$ defined in Proposition \ref{compo:ccp-phi}.
>From now on, we will use the convention that $z_t=\tilde{z}_t=\hz_t=\xi_t$ for $t \in [-r,0]$.
Note that this convention is consistent with the definition of a
classical controlled path, see Definition \ref{def:ccp}: since
$\xi$ is $2\gamma$-H\"older continuous,  it can be considered as a part of
the remainder term $\rho$.

The first part of the current section will be devoted to the study of
the map $T$. By (\ref{bnd:norm-imdx2}) we have that
\begin{multline*}
\cn[ \mathcal{J}(T_{\sigma}(z, \tilde{z})) ;\cq_{\ka,\alpha}([a,b];\R^{n})]  \\
\le c_{norm}
\lp \| \sigma \|_{\infty} + c_{\textrm{\it int}}(b-a)^{\ga-\ka}\cn [ T_{\sigma}(z,
    \tilde{z});\cd_{\ka, \hat{\alpha}}([a,b];\R^{n,d}) ]\rp.
\end{multline*}
Since
$$
T_{\sigma}(z , \tilde{z}) =  \left( T_{\sigma^{(1)}}(z , \tilde{z}),\ldots, T_{\sigma^{(n)}}(z , \tilde{z})
\right)^*,
$$
where $\sigma^{(i)} \in C_b^3(\R^{n,d}; \R^{1,d})$ for $i=1, \ldots, n$
and $\sigma =(\sigma^{(1)},\ldots,\sigma^{(n)})^*$,
it follows  by  (\ref{bnd:t-phi-z})  that
\begin{eqnarray*}
&& \cn[T_{\sigma}(z, \tilde{z});\cd_{\ka,\hat{\alpha}}([a;b]; \R^{n})]
\\ && \qquad  \qquad  \le
 c_{\sigma,  T} \lp 1+ \cn^2[z;\cq_{\ka,\alpha}([a,b];\R^{n})]  +\cn^2[\tilde {z};\cq_{\ka,\tilde{\alpha}}([a-r_k,b-r_1];\R^{n})] \rp.
\end{eqnarray*}
Combining these two estimates we obtain
\begin{align}\label{est_cruc}
 &  \cn \left[ \Gamma(z, \tilde{z});\cq_{\ka,\alpha}([a;b]; \R^{n})
   \right ]  \\ &\quad  \leq c_{ \textrm{\it growth}} \left (   1+  \cn^2[\tilde
 {z};\cq_{\ka,\tilde{\alpha}}([a-r_k,b-r_1];\R^{n})]   \right)
\left (1 +  (b-a)^{\gamma- \kappa} \nonumber
   \cn^2[z;\cq_{\ka,\alpha}([a,b];\R^{n})] \right),
\end{align}
where the constant $c_{ \textrm{\it growth}}$ depends only on
$c_{\textrm{\it int}}$, $c_{norm}$, $\sigma$, $\kappa$, $\gamma$ and $T$.
Thus  the semi-norm  of the mapping $\Gamma$ is quadratically bounded
in terms of the semi-norm of $z$ and $\tilde{z}$.

\medskip

Now let $z^{(1)}, z^{(2)} \in \cq_{\kappa, \alpha}([a,b];\R^n)$ and
$\tilde{z} \in \cq_{\kappa, \tilde{ \alpha} } ([a-r_k,
b-r_1]; \R^d)$.  Then, by  (\ref{contract}) we have
\begin{align} \label{contract_1}
& \cn[\Gamma(z^{(1)}, \tilde{z}) - \Gamma(z^{(2)},
\tilde{z});\cq_{\ka,0}([a,b];\R^{n})] \\ & \qquad  \qquad \qquad
\qquad   \le   2 c_{norm} \, c_{\textrm{\it int}}  (b-a)^{\ga-\ka}\cn
  [ T_{\sigma}(z^{(1)}, \tilde{z}) - T_{\sigma}(z^{(2)},
  \tilde{z});\cd_{\ka,0}([a,b];\R^{n,d}) ] . \nonumber
\end{align}
 Applying  Proposition \ref{compo:ccp-loc-lin}, i.e. inequality
(\ref{contract_2}), to the right hand side of the above equation we obtain   that
\begin{align} \label{contract_f}
& \cn[\Gamma(z^{(1)}, \tilde{z}) - \Gamma(z^{(2)},
\tilde{z});\cq_{\ka,0}([a,b];\R^{n})]   \\ &
\qquad   \le   c_{ \textrm{\it lip}}  \big(1+C(z^{(1)},z^{(2)},\tilde{z})\big)^2 \,\cn
  [ z^{(1)} -z^{(2)};\cq_{\ka,0}([a,b];\R^{n,d}) ]
\,  (b-a)^{\ga-\ka}
\nonumber,  \end{align}
with a constant   $c_{ \textrm{\it lip}}$ depending only on
$c_{\textrm{\it int}}$, $c_{norm}$, $\sigma$, $\kappa$, $\gamma$ and $T$,
and moreover
\begin{align*}
C(z^{(1)},z^{(2)}, \tilde{z} )&=  \cn[ \tilde{z}
;\cq_{\ka,\tilde{\alpha}}([a-r_k,b-r_1];\R^{n})]
\\ & \qquad + \cn[z^{(1)} ;\cq_{\ka,\alpha}([a,b];\R^{n})] + \cn[z^{(2)}
;\cq_{\ka,\alpha}([a,b];\R^{n})].  \end{align*}
 Thus, for fixed $\tilde{z}$ the mappings $\Gamma(\cdot,\tilde{z})$ are locally
Lipschitz continuous with respect to the semi-norm  $\cn[ \cdot ;\cq_{\ka,0}([a,b];\R^{n})]$.

\bigskip

We also need the following Lemma, which can be shown by
straightforward calculations:

\medskip

\begin{lemma}{\label{lem_ineq}}
Let $c, \alpha  \geq 0$, $\tau \in [0,T]$ and define the set
$${\mathcal A}_{\tau}^{c, \alpha}=\{u\in\R_+^*:\,\,c( 1+\tau^{\alpha}u^2)\le u\}.$$
Set also
$\tau^{*} =   (8c^{2})^{-1/\alpha}.$
Then we have
${\mathcal A}_{\tau^*}^{c, \alpha} \neq \emptyset $
and
$ \sup \{u \, ; \,  u \in
{\mathcal A}_{\tau^*}^{c, \alpha} \} \leq (4+2\sqrt{2})c. $
\end{lemma}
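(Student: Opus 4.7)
The plan is to view the defining inequality $c(1+\tau^\alpha u^2)\le u$ as a quadratic inequality in $u$. Assuming $c>0$ and $\alpha>0$ (both implicit in the very definition of $\tau^*$), rearranging gives
\begin{equation*}
c\tau^\alpha u^2 - u + c \le 0,
\end{equation*}
a quadratic in $u$ with positive leading coefficient. Its solution set in $\R_+^*$ is therefore either empty or a closed interval $[u_-,u_+]$ delimited by the two real roots of the associated equation, so the whole lemma reduces to a one-shot computation with the quadratic formula.

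First I would compute the discriminant $\Delta(\tau)=1-4c^2\tau^\alpha$ and note that $\Delta(\tau)\ge 0$ precisely when $\tau\le(4c^2)^{-1/\alpha}$. For the choice $\tau=\tau^*=(8c^2)^{-1/\alpha}$ one has $(\tau^*)^\alpha=1/(8c^2)$, hence
\begin{equation*}
\Delta(\tau^*)=1-\frac{4c^2}{8c^2}=\frac{1}{2}>0.
\end{equation*}
This positivity alone produces two distinct real roots and therefore ${\mathcal A}_{\tau^*}^{c,\alpha}\neq\emptyset$.

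Next I would apply the quadratic formula. With leading coefficient $a=c(\tau^*)^\alpha=1/(8c)$ the roots are
\begin{equation*}
u_\pm=\frac{1\pm\sqrt{1/2}}{2a}=4c\Bigl(1\pm\tfrac{1}{\sqrt{2}}\Bigr).
\end{equation*}
Both are strictly positive (Vieta's relations give product $1/(\tau^*)^\alpha>0$ and sum $1/(c(\tau^*)^\alpha)>0$), so ${\mathcal A}_{\tau^*}^{c,\alpha}=[u_-,u_+]\subset\R_+^*$ and
\begin{equation*}
\sup{\mathcal A}_{\tau^*}^{c,\alpha}=u_+=4c+2\sqrt{2}\,c=(4+2\sqrt{2})\,c,
\end{equation*}
which is exactly the stated bound. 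There is essentially no obstacle in this proof: it is a direct application of the quadratic formula, and the only substantive point is the algebraic check that the factor $8$ in the definition of $\tau^*$ is tuned precisely so as to produce the clean constant $(4+2\sqrt{2})\,c$.
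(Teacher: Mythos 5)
Your proof is correct, and since the paper omits the argument entirely (stating only that the lemma "can be shown by straightforward calculations"), your direct quadratic-formula computation is precisely the intended elementary verification: the discriminant at $\tau^*$ equals $\tfrac12$, the roots are $4c\bigl(1\pm\tfrac{1}{\sqrt 2}\bigr)$, and the larger one gives the bound $(4+2\sqrt 2)c$. Your remark that $c>0$ and $\alpha>0$ must be implicitly assumed for $\tau^*$ to make sense is also a fair observation about the statement as written.
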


\bigskip

Now we can state and prove our main result:
\begin{theorem}\label{thm:ex-uniq}
Let $x$ be a path satisfying Hypothesis \ref{hyp:x}, let $\xi \in
\cac^{2 \kappa}_1([-r,0]; \R^n)$  and let
$\si \in C^3_b(\R^{n,k+1}; \R^{n,d})$. Then we have:
\begin{enumerate}
\item
Equation (\ref{dde3}) admits a unique solution $y$ in
$\cq_{\ka,\xi_0}([0,T];\R^n)$ for any $\frac13<\ka<\ga$ and any $T>0$.
\item
 Let
$F: \cac_1^{2 \gamma}(\ro;\R^n)\times \cac_1^{\ga}([0,T];\R^d)\times
\left(\cac_2^{2\ga}([0,T];\R^{d\times d})\right)^{k+1} \rightarrow
  \cac_1^{\kappa}([0,T];\R^n)$  be the mapping defined by $$ F\left( \xi,x,\xd(0), \xd(-r_1),
\ldots , \xd(-r_k) \right) =y, $$
where $y$ is the unique solution of equation (\ref{dde3}).  This mapping
is locally
 Lipschitz continuous in the following sense:
Let $\tilde{x} $ be another driving rough path with corresponding
delayed L\'evy area $\xdt(-v)$, $v \in \{-r_k,\ldots,-r_0\},$ and $\tilde{\xi}$ another
initial condition. Moreover denote by
$ \tilde{y}$ the unique solution of the corresponding delay equation.
        Then, for every $N>0$, there exists a constant $K_N >0$ such that
\begin{align*} & \qquad \,\,\,  \| y - \tilde{y}\|_{\kappa,\infty}  \\ & \qquad
  \leq K_N \,    \left( \| x - \tilde{x}\|_{\gamma,\infty} +   \sum_{i=0}^{k}
\cn[  \xd(-r_i) - \xdt(-r_i) ; \cac_2^{2 \gamma}([0,T]; \R^{d})] +
\| \xi - \tilde{\xi} \|_{2 \gamma,\infty} \right) \end{align*}
holds for all tuples $(\xi,x, \xd, \xd(-r_1), \ldots, \xd(-r_k)), (\tilde{\xi},
\tilde{x}, \xdt, \xdt(-r_1), \ldots, \xdt(-r_k)) $ with
\begin{align*}
   \sum_{i=0}^{k}  \cn[  \xd(-r_i) ; \cac_2^{2 \gamma}([0,T]; \R^{d})]
&+ \sum_{i=0}^{k} \cn[  \xdt(-r_i) ; \cac_2^{2 \gamma}([0,T]; \R^{d})]
 \\  & + \| x\|_{\gamma,\infty} + \|\tilde{x}\|_{\gamma,\infty} +
 \| \xi \|_{2 \gamma,\infty} + \| \tilde{\xi} \|_{2 \gamma,\infty} \leq N , \end{align*}
where $\|f\|_{\mu,\infty}=\|f\|_{\infty}+|\der f|_{\mu}$ denotes  the usual H\"older norm of a path
$f$.
\end{enumerate}
\end{theorem}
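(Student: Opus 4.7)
\bigskip

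\noindent\textbf{Proof plan.}

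The idea is to build the solution iteratively on successive time windows of length at most $r_1$, via a Banach fixed point argument based on the map $\Gamma$ introduced at the top of Section~4. On each window $[j\tau,(j+1)\tau]$ with $\tau\le r_1$, the delayed argument $\tilde z$ fed into $\Gamma(\cdot,\tilde z)$ is entirely determined by $\xi$ together with the pieces of the solution already constructed on $[0,j\tau]$. Consequently $\Gamma(\cdot,\tilde z)$ is a genuine self-map of $\cq_{\ka,\alpha_j}([j\tau,(j+1)\tau];\R^n)$, where $\alpha_j$ is the previously constructed endpoint value, and the task on each window reduces to a single-variable fixed point problem.

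For the first step, let $\tilde z$ be fixed. Applying Lemma~\ref{lem_ineq} to the growth bound (\ref{est_cruc}) with $c=c_{\textrm{\it growth}}(1+\cn^{2}[\tilde z])$ and $\alpha=\ga-\ka$, produces a critical length $\tau^{*}=(8c^{2})^{-1/(\ga-\ka)}$ and a radius $M=(4+2\sqrt{2})c$ such that the closed ball
\[
\cb_M=\{z\in\cq_{\ka,\alpha_j}([j\tau,(j+1)\tau];\R^n)\,:\,\cn[z]\le M\}
\]
is stable under $\Gamma(\cdot,\tilde z)$ whenever $\tau\le\tau^{*}$. Shrinking $\tau$ further if necessary, the local Lipschitz estimate (\ref{contract_f}) on $\cb_M$ reads
\[
\cn[\Gamma(z^{(1)},\tilde z)-\Gamma(z^{(2)},\tilde z)]\le c_{\textrm{\it lip}}\bigl(1+2M+\cn[\tilde z]\bigr)^{2}\tau^{\ga-\ka}\cn[z^{(1)}-z^{(2)}],
\]
so the prefactor can be made strictly less than $\tfrac12$ by choosing $\tau\le\tau^{**}$ for some $\tau^{**}>0$ depending only on $M$ and $\cn[\tilde z]$. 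Banach's fixed point theorem then yields a unique solution on $[j\tau,(j+1)\tau]$, which is glued to the previous pieces. To make this iteration truly work with a \emph{uniform} step size, one verifies by induction that $\cn[\tilde z]$ remains bounded by a constant depending only on $\|x\|_{\ga}$, $\sum_{i}\|\xd(-r_i)\|_{2\ga}$, $\|\sigma\|_{C^{3}_{b}}$ and $\|\xi\|_{2\ka,\infty}$; this is the step where the main technical care is required, and it is exactly the content of the quadratic growth bound (\ref{est_cruc}) applied recursively. After $\lceil T/\tau\rceil$ steps the solution is defined on all of $[0,T]$, and uniqueness on each subinterval implies uniqueness globally.

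For the local Lipschitz statement, I would write out the analogue of the bound (\ref{contract_f}) for the full map $\Gamma$ when \emph{both} $z$ and the data $(\xi,x,\xd(-r_i))$ are varied. The ingredients are already available: Proposition~\ref{intg:mdx} tracks the dependence of the integral on $x$ and $\xd(-r_i)$ linearly through the constants $c_{\textrm{\it int}}$; Proposition~\ref{compo:ccp-loc-lin} (together with the boundedness of $\sigma$ and its derivatives) tracks the dependence of $T_{\sigma}$ on $(z,\tilde z)$ quadratically. Combining these, one obtains on each subinterval $[j\tau,(j+1)\tau]$ an inequality of the form
\[
\cn[y-\tilde y]_{j}\le K_N\Bigl(\|x-\tilde x\|_{\ga}+\sum_{i=0}^{k}\cn[\xd(-r_i)-\xdt(-r_i)]+\|\xi-\tilde\xi\|_{2\ga,\infty}+\cn[y-\tilde y]_{j-1}\Bigr),
\]
uniformly over data satisfying the bound by $N$. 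A discrete Gronwall argument over the $\lceil T/\tau\rceil$ windows absorbs the error propagation into a single constant $K_N$, giving the advertised bound $\|y-\tilde y\|_{\ka,\infty}\le K_N(\ldots)$.

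The main obstacle is the bookkeeping required to prevent the constants $M$, $\tau^{*}$, $\tau^{**}$ and $K_N$ from degenerating as we iterate across subintervals; the delay structure (which couples the norm on $[j\tau,(j+1)\tau]$ to the norm on $[(j-1)\tau,j\tau]$) and the quadratic nature of the growth bound (\ref{est_cruc}) conspire to make this non-trivial. Once one establishes that the $\cn$-norms remain bounded by constants depending only on the data listed in the statement, the rest of the argument is routine: a finite composition of Picard schemes for existence/uniqueness, and a telescoping Gronwall inequality for Lipschitz continuity.
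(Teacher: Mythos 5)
Your plan coincides with the paper's own proof: existence and uniqueness are obtained exactly as you describe, by Picard iteration on subwindows of the blocks $[lr_1,(l+1)r_1]$, using Lemma \ref{lem_ineq} together with (\ref{est_cruc}) for invariance of a ball and (\ref{contract_f}) for the contraction, the inductive norm control you flag being precisely the a priori bound (\ref{bound_sol}); the Lipschitz statement is likewise proved by a local difference estimate (derived from the decompositions (\ref{decomp_y_1})--(\ref{decomp_y_2}) and the operator $\Lambda$) followed by a discrete Gronwall iteration over windows whose length, and hence whose number, is controlled by $N$ via (\ref{bound_sol}). I see no gap in the approach.
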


\vspace{0.3cm}

\begin{proof}
The proof of Theorem \ref{thm:ex-uniq} is obtained by means of a fixed point argument, based
on the map $\Gamma$ defined above.

\medskip
\noindent
{\it 1) Existence and uniqueness.}
Without loss of generality assume that $T=N r_1$. We will
construct the solution of equation (\ref{dde3}) by induction over the
intervals
$[0, r_1]$, $[0, 2r_1]$, $\ldots$, $[0, N r_1],$
where we recall that $r_1$ is the smallest  delay in (\ref{dde3}).

\medskip
\noindent
{\it (i)} We will first show that equation (\ref{dde3}) has a solution
on the interval $[0,r_1]$. For this define
$$ \tilde{\tau}_1 =      (8c_1^{2})^{-1/(\ga-\ka)}\wedge r_1 ,
$$
where $$ c_1 = c_{ \textrm{\it growth}} \left (   1+  \cn^2[
 \xi ;\cac_2^{2 \gamma}([-r_k,0];\R^{n})]   \right).$$
Moreover, choose $\tau_1\in [0, \tilde{\tau}_1]$ and $N_1 \in
\mathbb{N}$ such that  $N_1 \tau_1 = r_1$, and define
$$ I_{i,1}=[(i-1) \tau_1, i \tau_1], \qquad i=1, \ldots, N_1.$$
Finally, consider the following mapping:
Let  $\Gamma_{1,1}:\cq_{\ka,\xi_{0}}(I_{1,1}; \R^n)\to\cq_{\ka,\xi_{0}}(I_{1,1}; \R^n)$ given
by $\hat{z}= \Gamma_{1,1}(z)$, where
$$    \,\, (\der\hz )_{st}=\cj_{st}(T_{\sigma}(z, \xi)\, dx)$$ for
  $\,\,  0 \leq s \leq t \leq  \tau_1$ .

Clearly, if $z^{(1,1)}$ is  a fixed point of the map $\Gamma_{(1,1)}$, then
$z^{(1,1)}$ solves equation (\ref{dde3}) on the interval $I_{1,1}$. We shall thus
prove that such a fixed point exists. First, due to (\ref{est_cruc})
we have the estimate
\begin{align}
\cn \left[ \Gamma_{1,1}(z);\cq_{\ka,\xi_{0}}(I_{1,1}; \R^{n}) \right]& \le
c_1 \lp 1 +  {\tau_1}^{\ga-\ka}   \cn^2 [z;\cq_{\ka,\xi_{0}}(I_{1,1}; \R^{n})] \rp.
\end{align}
Thanks to our choice of $\tau_1$ and Lemma \ref{lem_ineq}  we can now choose
$M_1 \in {\mathcal A}_{\tau^*}^{c_1,\ga-\ka}$ accordingly  and obtain that
the  ball \begin{equation}\label{def:ball-m}
B_{M_1}=\lcl z \in \cq_{\ka,\xi_{0}}(I_{1,1};\R^n);
\,\cn[z;\cq_{\ka,\xi_{0}}(I_{1,1};\R^n)]\le M_1 \rcl
\end{equation}  is left
invariant  under $\Gamma_{1,1}$.
Now, by changing $\tau_1$ to a smaller value
(and then $N_1$ accordingly) if necessary, observe that
$\Gamma_{1,1}$ also is a  contraction on  $B_{M_1}$, see (\ref{contract_f}).
Thus, the Banach theorem implies that the mapping
 $\Gamma_{1,1}$ has a fixed point, which
leads to a unique solution $z^{(1,1)}$ of equation (\ref{dde3}) on
the interval $I_{1,1}$.

If $\tau_1= r_1$, the first step of the proof is finished. Otherwise,
 define  the mapping  $\Gamma_{2,1}:\cq_{\ka,z^{(1,1)}_{\tau_1}}(I_{2,1}; \R^n)
\to\cq_{\ka,z^{(1,1)}_{\tau_1}}(I_{2,1}; \R^n)$
by $\hat{z}= \Gamma_{2,1}(z)$ where
$$    \,\, (\der\hz )_{st}=\cj_{st}(T_{\sigma}(z, \xi)\, dx)$$ for
  $\,\,  \tau_1 \leq s \leq t \leq  2\tau_1$  .
Since $\tau_1 < r_1$, it still holds
\begin{align} \label{est_cruc_2}
\cn [ \Gamma_{2,1}(z);\cq_{\ka,z^{(1,1)}_{\tau_1}}(I_{2,1}; \R^{n}) ]& \le
c_1 (1 +  {\tau_1}^{\ga-\ka}   \cn^2 [z;\cq_{\ka,z^{(1,1)}_{\tau_1}}(I_{2,1}; \R^{n})] )
\end{align}
and
we obtain by the same fixed point argument as above, the existence of a unique
solution  $z^{(2,1)}$
of equation (\ref{dde3}) on
the interval $I_{2,1}$.

Repeating this step as often as necessary, which is possible since the estimates
on the norms of the mappings $\Gamma_{j,1}$, $j=1, \ldots, N_1 $ are of the same
type as  (\ref{est_cruc}), i.e. the constant $c_1$ does not change, we obtain
that
$ z = \sum_{j=1}^{ N_1} z^{(j,1)} \,  \1_{I_{j,1}} $
is the unique solution to the equation  (\ref{dde3}) on the interval $[0, r_1]$.

\medskip

Now, it remains to verify that $z$ given as above is in fact a {\small CCP}.
First note that by construction $z$ is continuous on $[0,r_1]$ and
moreover that  $z$ is a {\small CCP}  on the
subintervals $I_{j,1}$ with decomposition $$ (\delta z)_{st}  =
\zeta^{(j,1)}_s (\delta x)_{st} + \rho^{(j,1)}_{st}, \qquad s,t \in I_{j,1},
$$ for $s \leq t$.
Clearly, we have
$$  (\delta z)_{st}  =   \sum_{j=j_s}^{j_t}  (\delta z^{(j,1)})_{s \vee t_j, t \wedge
t_{j+1}} , \qquad s,t \in
[0,r_1],$$ for $s \leq t$, where $t_j=(j-1)\tau_1$ and $j_s,j_t \in \{1,
\ldots, N-1 \}$ are such that
$$  t_{j_s} \leq  s < t_{j_s+1} < \ldots  < t_{j_t} <t \leq t_{j_t+1}.$$

Setting
$$ \zeta_s =  \sum_{j=1}^{ N_1} \zeta^{(j,1)}_s \,  \1_{I_{j,1}}(s) ,
\qquad s \in [0,r_1]$$
and
$$  \rho_{st} =  \sum_{j=j_s}^{j_t} (\zeta^{(j,1)}_{s \vee t_j} - \zeta^{(j_s,1)}_s) (\delta x)_{s \vee t_j, t \wedge
t_{j+1}} +  \sum_{j=j_s}^{j_t} \rho^{(j,1)}_{s \vee t_j, t \wedge
t_{j+1}}$$
we obtain
\begin{align*}  (\delta z)_{st} =  \zeta_s  (\delta x)_{st} + \rho_{st},
  \qquad s,t \in [0,r_1]
\end{align*}
for $ s \leq t$.

Now, it follows easily by the subadditivity of the H\"older norms that
$$
\sup_{s,t \in [0,\tau_1]}
\frac{|(\delta z)_{st}|}{{}\, \, |s-t|^{\kappa}}  \leq \sum_{j=1}^{N_1  }  \sup_{s,t \in I_{j,1}}
\frac{|(\delta z^{(j,1)})_{st}|}{{}\, \, |s-t|^{\kappa}}$$
and
$$ \sup_{t \in [0,\tau_1]} |\zeta_t |= \sup_{j=1,\ldots,N-1} \sup_{t \in I_{j,1}}
|\zeta_{t}^{(j,1)}|,
\qquad
\sup_{s,t \in [0,\tau_1]}
\frac{|(\delta \zeta)_{st}|}{{}\, \, |s-t|^{\kappa}}  \leq \sum_{j=1}^{N_1  }  \sup_{s,t \in I_{j,1}}
\frac{|(\delta \zeta^{(j,1)})_{st}|}{{}\, \, |s-t|^{\kappa}}.
$$
Furthermore, we obtain
$$
\sup_{s,t \in [0,\tau_1]}
\frac{|\rho_{st}|}{{}\, \, |s-t|^{2\kappa}}  \leq \sum_{j=1}^{N_1  }  \sup_{s,t \in I_{j,1}}
\frac{|(\rho^{(j,1)} )_{st}|}{{}\, \, |s-t|^{2 \kappa}} +   \sup_{s,t \in [0,\tau_1]}
\frac{| (\delta x)_{st} |}{{}\, \, |s-t|^{\kappa}} \sum_{j=1}^{N_1  }  \sup_{s,t \in I_{j,1}}
\frac{|(\delta \zeta^{(j,1)} )_{st}|}{{}\, \, |s-t|^{\kappa}}.
$$
Thus, we have in fact that $z \in \cq_{\kappa, \xi_{0}}([0, \tau_1]; \R^{n})$.

\medskip
\noindent
{\it (ii)} Let $l =1, \ldots, N-1$ assume that $\tilde{z}
\in  \cq_{\kappa, \xi_{0}}([0,  l r_1]; \R^{n})$ is the solution of the delay equation
(\ref{dde3}) on the interval
$[0, l r_1]$. Now we will construct the solution on the interval $[lr_1, (l+1)r_1]$.
Set $$c_{l+1}= c_{\textrm{\it growth}} \,  ( 1 +
  \cn^2[\tilde{z};\cq_{\ka, \tilde{z}_{l r_1 -r_k}} ( [lr_1 -r_k, lr_1]; \R^{n}]              )      $$
and define
$$ \tilde{\tau}_{l+1} =     (8c_{l+1}^{2})^{-1/(\ga-\ka)} \wedge r_1 .  $$
Furthermore, choose $\tau_{l+1}\in [0, \tilde{\tau}_{l+1}]$ and $N_{l+1} \in
\mathbb{N}$ such that  $N_{l+1} \tau_{l+1} = r_1$, and define
$$ I_{i,l+1}=  [  lr_1 + (i-1) \tau_{l+1} , l r_1 + i \tau_{l+1}], \qquad i=1, \ldots, N_{l+1}.$$
Consider   the mapping  $\Gamma_{1,l+1}:\cq_{\ka,\tilde{z}_{lr_1}}( I_{1,l+1}; \R^n)\to\cq_{\ka,\tilde{z}_{lr_1}}(I_{1,l+1}; \R^n) $
by $\hat{z}= \Gamma_{1,l+1    }(z) $ where
$$    \,\, (\der\hz )_{st}=\cj_{st}(T_{\sigma}(z, \tilde{z})\, dx)$$ for
  $\,\,  lr_1 \leq s \leq t \leq  lr_1+\tau_{l+1}$  .
Again  $z^{(1,l+1)}$ is  a fixed point of the map $\Gamma_{1,l+1}$ if and only
if $z^{(1,l+1)}$ solves equation (\ref{dde3}) on the interval $I_{1,l+1}$.
However, by (\ref{est_cruc})
we have the estimate
\begin{align*}
\cn [ \Gamma_{1,l+1} (z);\cq_{\ka,   \tilde{z}_{l r_1} }( I_{1,l+1} ; \R^{n}) ] &
   \leq c_{l+1} \, ( 1+   {\tau_{l+1}}^{\ga-\ka}   \cn^2  [z;\cq_{\ka,   \tilde{z}_{lr_1} }(I_{1,l+1} ; \R^{n}) ] ).
\end{align*}
Now we can apply the same fixed point argument as in step {\it (i)},
 which leads to a unique solution $z^{(1,l+1)}$ of (\ref{dde3}) on the interval $I_{1,l+1} $.

If $\tau_{l+1} \neq r_1$, define for the next interval $I_{2,l+1}$ the mapping
 $$\Gamma_{2,l+1}:\cq_{\ka,z^{(1,l+1)}_{lr_1+ \tau_l}}(I_{2,l+1} ; \R^n)\to\cq_{\ka,z^{(1,l+1)}_{lr_1+\tau_l}}( I_{2,l+1}; \R^n)$$
by $\hat{z}= \Gamma_{ 2,l+1} (z)$, where
$(\der\hz)_{st}=\cj_{st}(T_{\sigma}(z, \tilde{z})\, dx)$ for
$lr_1  + \tau_{l+1}\leq s \leq t \leq  lr_1+2 \tau_{l+1}$  .
Since $ lr_1 +\tau_{l+1} \leq (l+1)r_1$,
we still have the estimate
\begin{align*}
& \cn [ \Gamma_{2,l+1}(z);\cq_{\ka,   z^{(1,l+1)}_{lr_1 + \tau_l } }(
  I_{2,l+1}; \R^{n}) ]  \leq
c_{l+1} \, ( 1+   \tau_{l+1}^{\ga-\ka}   \cn^2  [z;\cq_{\ka,   z^{(1,l+1)}_{lr_1 + \tau_l }}(I_{2,l+1}; \R^{n}) ] ).
\end{align*}
Now the existence of a unique solution $z^{(2,l+1)}$ of (\ref{dde3}) on the interval $I_{2,l+1}$ follows again by the same fixed point argument.

Proceeding completely analogous to step {\it (i)} we obtain the existence of a
unique path $z \in \cq_{\kappa, \tilde{z}_{lr_1}}([lr_1, (l+1)r_1]; \R^{n})$,
which solves the delay equation (\ref{dde3}) on the interval $[lr_1, (l+1)r_1]$
for a given ``initial path''
   $\tilde{z} \in  \cq_{\kappa, \xi_{0}}([0,  l r_1]; \R^{n})$.
Patching these two paths together, we obtain (using the same arguments as at the end of  step
 {\it(i)}) a path  $z \in  \cq_{\kappa, \xi_{0}}([0,  (l+1) r_1]; \R^{n})$,
which solves equation (\ref{dde3}) on the interval $[0,  (l+1) r_1]$.

Thus we have shown that there exists a unique path $z \in  \cq_{\kappa,
  \xi_{0}}([0,  T]; \R^{n})$, which is a solution of the equation
(\ref{dde3}). Moreover, by the above construction we obtain the
following bound on the norm of this path:
\begin{align} \label{bound_sol}
&\cn[ z; \cq_{\kappa, \xi_{0}}([0, T]; \R^{n})] \\ & \quad \leq f \lp \cn[ x;
  \cac^{\gamma}_1([0,T]; \R^{n})] + \sum_{i=0}^{k} \cn[ \xd(-r_i);
  \cac^{2\gamma}_2([0,T]; \R^{n})] + \cn[ \xi;
  \cac^{2\gamma}_1([0,T]; \R^{n})] \nonumber
      \rp,
\end{align} where $f:[0, \infty) \rightarrow (0, \infty)$ is a
continuous non-decreasing function, which depends only on $\kappa, \gamma, n,d, \sigma,
T$ and $r_1, \ldots, r_k$.

\medskip
\noindent
{\it 2) Continuity of the It\^{o} map.}
Let $y= F\left( \xi,x,\xd(0), \xd(-r_1),
\ldots , \xd(-r_k) \right).$
Since $y$ solves equation (\ref{dde3}), we have
$ (\delta y)_{st} = \cj_{st}(\sigma(y_s, \mathfrak{s}(y)) \, dx_s).$
It follows by  the Propositions \ref{compo:ccp-phi} and  \ref{intg:mdx} that
\begin{equation}
(\der y)_{st}= \label{decomp_y_1}
m_s ( \der x)_{st}
+ \sum_{i=0}^{k}
\zeta^{(i)}_s \cdot \xdst(-r_i)
+\laa_{st}\lp  \rho \der x +  \sum_{i=0}^{k}
\der\zeta^{(i)} \cdot \xd(-r_i) \rp
\end{equation}
for $0 \leq s \leq t \leq T$, with
\begin{equation}
m_s = \sigma(y_s, \mathfrak{s}(y)_s), \quad  \zeta^{(i)}_s=
\psi_s^{(i)} m_{s-r_i}, \quad
  \psi_s^{(i)} =    \left(  \frac{ \partial \vp }{
      \partial x_{1,i}}   (y_s, \mathfrak{s}(y)_{s})
 , \ldots  ,  \frac{ \partial \vp }{ \partial x_{n,i}}
  (y_s, \mathfrak{s}(y)_{s}) \right)
\label{61}
\end{equation}
 for $i=0, \ldots, k$.
Moreover, note that the remainder term $\rho$ of the decomposition of $y$ satisfies the relation
\begin{equation}
 \rho_{st} =  \sum_{i=0}^{k}
\zeta^{(i)}_s \cdot \xdst(-r_i)
+\laa_{st}\lp \rho \der x +  \sum_{i=0}^{k}
\der\zeta^{(i)} \cdot \xd(-r_i) \rp.
\label{62}
\end{equation}
\medskip

Now consider (\ref{dde3}) with a different initial path $\tilde{\xi}$,
driving rough path $\tilde{x}$ and corresponding delayed L\'evy area $\xdt(v)$, for $v \in
   \{-r_k,\ldots,-r_0\}$. If the assumptions of the theorem are  satisfied, then also
the equation
\begin{equation*}
\left\{
\begin{array}{lll}
 d \tilde{y}_t &= \sigma(\tilde{y}_t, \tilde{y}_{t-r_1}, \ldots,
\tilde{y}_{t-r_k}) \, d \tilde{x}_t , \qquad t \in [0,T],\\
\tilde{y}_t &= \tilde{\xi}_t,  \qquad \qquad \qquad \qquad \qquad \,\, \quad t \in [-r,0]
\end{array}\right.
\end{equation*}
admits a unique solution  $\tilde{y}= F( \tilde{\xi}, \tilde{x}(0),\xdt, \xdt(-r_1),
\ldots , \xdt(-r_k) )$.
Clearly we also have in this case
\begin{equation} \label{decomp_y_2}
(\der \tilde{y})_{st}=
\tilde{m}_s ( \der \tilde{x})_{st}
+ \sum_{i=0}^{k}
\tilde{\zeta}^{(i)}_s \cdot (\xdt(-r_i))_{st}
+\laa_{st}\lp \tilde{\rho} \der \tilde{x} +  \sum_{i=0}^{k}
\der\tilde{\zeta}^{(i)} \cdot \xdt(-r_i) \rp
\end{equation}
for $0 \leq s \leq t \leq T$, with
$\tilde{m}$, $\tilde{\zeta}^{(i)}$ and $  \tilde{\psi}^{(i)}$
defined according to (\ref{61}) and (\ref{62}).

\medskip
\noindent
{\it (i)}  We first analyse the difference between $\rho$ and $\tilde{\rho}$.
Here we have
\begin{equation}\label{eq:def-rho-minus-tilde-rho}
\rho_{st} -\tilde{\rho}_{st} = e^{(1)}_{st} + \Lambda_{st}(e^{(2)})      ,
\end{equation}
with
\begin{eqnarray*}
e^{(1)}_{st}&=& \sum_{i=0}^{k}
\zeta^{(i)}_s \cdot (\xd(-r_i))_{st}
- \sum_{i=0}^{k}
\tilde{\zeta}^{(i)}_s \cdot (\xdt(-r_i))_{st}  \\
e^{(2)} &=& \rho\,\delta x - \tilde{\rho}\,\delta \tilde{x}  +\sum_{i=0}^{k}
\der\zeta^{(i)} \cdot \xd(-r_i) -
\sum_{i=0}^{k}
\der\tilde{\zeta}^{(i)} \cdot \xdt(-r_i) .
\end{eqnarray*}
Now
set
\begin{multline*}
C(y)=   \|x\|_\infty + \|x\|_\ga
 + \sum_{i=0}^{k} \| \xd(-r_i)\|_{2\ga}  +
\cn[y; \cq_{\kappa, \alpha}([0,T]; \R^{n})] +  \| \xi\|_\infty + \|\xi\|_{2\gamma},
\end{multline*}
define $C(\tilde{y})$ accordingly for $\tilde y$, and let $R$ be the quantity
\begin{multline*}
R=\| x- \tilde{x}\|_\infty +
\|x-\tilde{x}\|_{\ga} +\sum_{i=0}^{k} \| \xd(-r_i) -
  \xdt(-r_i)\|_{2\gamma}  + \| \xi- \tilde{\xi}\|_\infty +
 \|\xi-\tilde{\xi}\|_{2\gamma}.
\end{multline*}
In the following we will denote
constants, which depend only on $\kappa, \gamma, n, d,\sigma$ and $T$, by
$c$ regardless of their value.

\smallskip

Fix an interval $[a,b] \subset [0,T]$.
By straightforward calculations we have
\begin{align} \label{est_e1}
|e^{(1)}_{st}| &\leq c \,  (1+C(y))  |t-s|^{2\gamma}  \, R+ c \, C(y)
|t-s|^{2\gamma}  \,  \sup_{\tau \in [(s-r_k)^+, t]} |y_{\tau}- \tilde{y}_{\tau}|
\end{align}
for $s,t \in [a,b]$.
Now, consider the term $e^{(2)}$. We have
\begin{align*} e_{sut}^{(2)} &= \rho_{su} (\der x)_{ut} - \tilde{\rho}_{su} (\der \tilde{x})_{ut} +  \sum_{i=0}^{k}
(\der \zeta^{(i)})_{su} \cdot {\bf x}^{\bf 2}_{ut}(-r_i) - \sum_{i=0}^{k}
(\der \tilde{\zeta}^{(i)})_{su} \cdot \xdt_{ut}(-r_i)
\\ & = (\rho- \tilde{\rho})_{su}  (\der x)_{ut} + \tilde{\rho}_{su}  (\der
(x - \tilde{x} ))_{ut}
\\& \qquad + \sum_{i=0}^{k}
(\der (\zeta^{(i)} -\tilde{\zeta}^{(i)}))_{su} \cdot {\bf x}^{\bf 2}_{ut}(-r_i) - \sum_{i=0}^{k}
(\der \tilde{\zeta}^{(i)})_{su} \cdot (  {\bf x}^{\bf 2}_{ut}(-r_i)    - \xdt_{ut}(-r_i))
\end{align*}
for $s,u,t \in [a,b]$.
Clearly, it holds
\begin{align*}
\left|  (\rho- \tilde{\rho})_{su}  (\der x)_{ut}  \right| & \leq C(y)
|t-u|^{\gamma}  |s-u|^{2
  \kappa}
\, \cn[  \rho- \tilde{\rho}; \cac_2^{2
  \kappa}([a,b]; \R^{n} )     ]  , \\
\left| \tilde{\rho}_{su}  (\der
(x - \tilde{x} ))_{ut} \right| &\leq |t-u|^{\gamma}  |s-u|^{2
  \kappa} \,  \cn[  \tilde{\rho}; \cac_2^{2
  \kappa}([a,b]; \R^{n} )     ] \,  R
 \end{align*}
and
\begin{align*}
 \left| \sum_{i=0}^{k}
(\der (\zeta^{(i)} -\tilde{\zeta}^{(i)}))_{su} \cdot {\bf x}^{\bf 2}_{ut}(-r_i)
\right| & \leq c \, C(y)  |t-u|^{2\gamma} \,  \sum_{i=0}^{k} \left|
(\der (\zeta^{(i)} -\tilde{\zeta}^{(i)}))_{su} \right|, \\ \left| \sum_{i=0}^{k}
(\der \tilde{\zeta}^{(i)})_{su} \cdot (  {\bf x}^{\bf 2}_{ut}(-r_i)    -
\xdt_{ut}(-r_i))  \right| & \leq  |t-u|^{2 \gamma} \, R \,
\sum_{i=0}^{k} \left|
(\der \tilde{\zeta}^{(i)})_{su} \right| .
\end{align*}
Furthermore, we also have, for any $i=0,\ldots,k$ that
$$
\left|
\der (\zeta^{(i)}- \tilde{\zeta}^{(i)}  )_{su} \right|  \leq
c \sup_{\tau_1,\tau_2 \in [s-r_i,t-r_i]} | (y_{\tau_1}-\tilde{y}_{\tau_1})
-(y_{\tau_2} - \tilde{y}_{\tau_2} )|,
\quad \,\,
\left|
(\der \tilde{\zeta}^{(i)} )_{su} \right|  \leq
C(\tilde{y})  |s-u|^{\kappa}.
$$

Recall that the H\"older norm of a path $f$ is defined by
$$ \|  f\|_{\mu, \infty,[s,t]} =  \sup_{ \tau \in [s,t]}
|f_{\tau}| +  \sup_{ \tau_1, \tau_2 \in [s,t]} \frac{
|f_{\tau_1} -f_{\tau_2} |
}{|\tau_2 - \tau_1|^{\mu}}.
$$
Set also $C=c(1+C(y)+ C(\tilde{y}))$,
where $c$ is again an arbitrary constant depending only on $\kappa,
\gamma, n, d,
\sigma$ and $T$.
Using these notations and combining the previous estimates, we end up with:
\begin{multline}\label{est_e2}
\left| e^{(2)}_{sut} \right|  \leq  C |t-u|^{\gamma} |s-u|^{2 \kappa}    \,
R+
 C  |t-u|^{2\gamma}
|s-u|^{ \kappa} \sum_{i=0}^k \|y- \tilde{y} \|_{\kappa,\infty,  [a-r_i,b-r_i]}   \\
+ C  |t-u|^{\gamma}  |s-u|^{ 2
  \kappa} \, \cn[  \rho- \tilde{\rho}; \cac_2^{2
  \kappa}([a,b]; \R^{n} )     ]   .
\end{multline}
Hence $e^{(2)}$ belongs to $ \dom(\laa) $
and we obtain by Proposition \ref{prop:Lambda}
that
\begin{equation}
 \|   \Lambda( e^{(2)}) \|_{ 3 \kappa } \label{est_e3}
 \leq  C \, R  + C \,\sum_{i=0}^k
\|y- \tilde{y} \|_{\kappa,\infty,  [a-r_i,b-r_i]}  + C  \,  \cn[  \rho- \tilde{\rho}; \cac_2^{2
  \kappa}([a,b]; \R^{n} )     ].
\end{equation}
Inserting the estimates for $e^{(1)}$ and $\Lambda(e^{(2)})$,
i.e. (\ref{est_e1}) and (\ref{est_e3}), into the definition (\ref{eq:def-rho-minus-tilde-rho})
of $\rho - \tilde{\rho}$ gives finally
\begin{eqnarray*}
&&\cn [ \rho - \tilde{\rho}; \cac_{2}^{2\kappa}( [a,b]; \R^{n})] \leq  C
|b-a|^{\gamma-\kappa} \sum_{i=0}^k  \| y - \tilde{y} \|_{\kappa, \infty,[a-r_i,b-r_i]}
\, R
\\
&&\hskip5cm+ C
 |b-a|^{\gamma-\kappa}   \, R
 +  C |b-a|^{\gamma-\kappa} \,  \cn[  \rho- \tilde{\rho}; \cac_2^{2
  \kappa}([a,b]; \R^{n} )     ]  , \nonumber
\end{eqnarray*}
and due to the subadditivity of the H\"older norms, we get
\begin{multline}\label{est_finale_1}
\cn [ \rho - \tilde{\rho}; \cac_{2}^{2\kappa}( [a,b]; \R^{n})] \leq  C |b-a|^{\gamma-\kappa} \,  \cn[  \rho- \tilde{\rho}; \cac_2^{2
  \kappa}([a,b]; \R^{n} )     ]  +
C
|b-a|^{\gamma-\kappa}   \,  \| y - \tilde{y} \|_{\kappa, \infty,[a,b]} \,
R\\
+ C
|b-a|^{\gamma-\kappa}   \, \| y -
\tilde{y} \|_{\kappa, \infty,[a-r_k,a]}  \, R   + C  |b-a|^{\gamma-\kappa}
 \,R.
\end{multline}

\medskip
\noindent
{\it (ii)}
Now consider the difference between $y$ and $\tilde{y}$.
Completely analogous to step {\it (i)} we also obtain that
\begin{multline*}
 \cn [ y- \tilde{y}; \cac_{1}^{\kappa}( [a,b]; \R^{n})] \leq
C |b-a|^{\gamma-\kappa}  \,  \cn[  \rho- \tilde{\rho}; \cac_2^{2
  \kappa}([a,b]; \R^{n} )     ]+
C
|b-a|^{\gamma-\kappa}   \,  \| y - \tilde{y} \|_{\kappa,\infty, [a,b]} \,
R\\
+ C
|b-a|^{\gamma-\kappa}   \, \| y -
\tilde{y} \|_{\kappa, \infty,[a-r_k,a]} \, R   + C  |b-a|^{\gamma-\kappa}
\,R.
\end{multline*}
Moreover, since
$$  \sup_{\tau \in [a,b]} |y_{\tau} - \tilde{y}_{\tau}| \leq   |y_{a}
-\tilde{y}_a| +   (b-a)^{\kappa} \,
\cn [ y - \tilde{y}; \cac_{1}^{\kappa}( [a,b]; \R^{n})], $$
we also have
\begin{multline} \label{est_finale_2}
 \| y - \tilde{y} \|_{\kappa; [a,b]}  \leq  C |b-a|^{\gamma-\kappa} \,
\cn[  \rho- \tilde{\rho}; \cac_2^{2
  \kappa}([a,b]; \R^{n} )     ]  +
C
|b-a|^{\gamma-\kappa}   \, \| y - \tilde{y} \|_{\kappa, \infty,[a,b]} \, R \\
+ C
|b-a|^{\gamma-\kappa}  \,  \| y -
\tilde{y} \|_{\kappa, \infty,[a-r_k,a]} \, R   + |y_a- \tilde{y}_a|+ C
|b-a|^{\gamma-\kappa}  \,R.
\end{multline}

\medskip
\noindent
{\it (iii)} Now set
$$ \Delta(a,b)= \cn [ \rho - \tilde{\rho}; \cac_{2}^{2\kappa}( [a,b]; \R^{n})]
+ \| y -
 \tilde{y} \|_{\kappa,\infty, [a,b]}. $$
By combining (\ref{est_finale_1}) and (\ref{est_finale_2})  we
finally have that
\begin{multline} \label{est_finale_3}
  \Delta(a,b)    \leq  C(1+R)  |b-a|^{\gamma-\kappa} \,  \Delta(a,b)
 + C(1+ R)
|b-a|^{\gamma-\kappa}  \, \Delta((a-r_k)^+,a)  \\     + |y_a- \tilde{y}_a| +
C(1+R)  |b-a|^{\gamma-\kappa} \, R.
\end{multline}
Now
choose $a=0$ and $b_1 = \left(\frac{1}{2 C(1+R)}
\right)^{1/(\gamma-\kappa)} $.
In this case, we obtain from (\ref{est_finale_3}) that
\begin{align*}
\Delta(0,b_1) \leq \frac{1}{2}  \Delta(0,b_1)  +
|\xi_0 - \tilde{\xi_0}| + \frac{1}{2} \,R ,
\end{align*}
which yields
\begin{align}
\Delta(0,b_1) \leq    R
+ 2  |\xi_0 - \tilde{\xi_0}|  \leq
3 R.  \label{est_almost_fin}
\end{align}
For the next interval $[b_1,2b_1]$, we obtain in turn that
\begin{align*} \Delta(b_1,2b_1)
 \leq \frac{1}{2} \Delta(b_1,2b_1)  + \frac{1}{2}  \Delta(0,b_1)  + |y_{b_1} - \tilde{y}_{b_1}|
 +  \frac{1}{2}R ,
\end{align*}
and hence
\begin{eqnarray*}
\Delta(b_1,2b_1)
 &\leq & \Delta(0,b_1) +  2 |y_{b_1} - \tilde{y}_{b_1}| + R \le 10R,
\end{eqnarray*}
by  (\ref{est_almost_fin}).

Repeating this step $ \lfloor T/ b_1 \rfloor$-times
we obtain that there exists a continuous non-de\-crea\-sing function $g: (0, \infty)
\rightarrow (0, \infty)$ such that
$$ \Delta(ib_1, (i+1)b_1) \leq g(T/ b_1   ) \, R$$
for all $i= 0, \ldots,   \lfloor  T/ b_1 \rfloor$.
 Using the subadditivity of
the H\"older norms,
we obtain  the estimate
\begin{align} \Delta(0,T) \leq  (1+T/b_1 ) g(T/ b_1   ) \,
  R. \label{el_1} \end{align}
Now recall that   $C=c(1+C(y) + C(\tilde{y}))$ and note that $ R \leq
c(C(y) + C(\tilde{y}))$. Thus we  have
 $$T/b_1 = T \left(2 C(1+R)
\right)^{1/(\gamma-\kappa)}   \leq c ( C(y) + C(\tilde{y})))^{1/(\gamma-\kappa)},$$
where
\begin{multline*}
C(y)=   \|x\|_\infty + \|x\|_\ga
 + \sum_{i=0}^{k} \| \xd(-r_i)\|_{2\ga}  +
\cn[y; \cq_{\kappa, \alpha}([0,T]; \R^{n})] +  \| \xi\|_\infty + \|\xi\|_{2\gamma},
\end{multline*}
and $C(\tilde{y})$ is defined accordingly. However, by
(\ref{bound_sol}) it  follows that
\begin{align*} C(y) +C(\tilde{y}) \leq D +f(D) + \tilde{D} + f(\tilde{D}) ,
\end{align*} where $$D=  \|x\|_\infty + \|x\|_\ga
 + \sum_{i=0}^{k} \| \xd(-r_i)\|_{2\ga}  +  \| \xi\|_\infty + \|\xi\|_{2\gamma},$$
and $\tilde{D}$ is again defined accordingly.
Thus, we obtain now from (\ref{el_1}) that there exists a continuous
function $ \bar{g}: [0, \infty) \rightarrow [0, \infty)$, which depends
only on   $\kappa, \gamma, \sigma, n, d, T$ and $r_1, \ldots, r_k$, such  that
$$  \Delta(0,T) \leq \bar{g}(D+ \tilde{D}) \, R  .$$
Hence, the assertion follows.

\end{proof}

\section{Application to the fractional Brownian motion}

All the previous constructions rely on the specific assumptions
we have made on the path $x$. In this
section, we will show how
our results can be applied to the fractional Brownian motion.

\subsection{Definition}
We consider in this section a $d$-dimensional fBm with
Hurst parameter $H$ defined on the real line, that is a
centered Gaussian process
\begin{equation*}
B=\lcl B_t=(B_t^1,\ldots,B_t^d);\, t\in\R \rcl,
\end{equation*}
where $B^1,\ldots,B^d$ are $d$ independent one-dimensional
fBm, i.e., each $B^{i}$ is a centered Gaussian process with continuous sample paths and  covariance function
\begin{equation}
\label{rhts}
R_H(t,s)=
\frac12 \lp |s|^{2H}+|t|^{2H}-|t-s|^{2H} \rp
\end{equation}
for $i=1, \ldots, d$.
The fBm verifies the following  two important properties:
\begin{equation}\label{scaling}
\mbox{(scaling)}\quad \mbox{For any }c>0,\,B^{(c)}=c^H B_{\cdot/c}\mbox{ is a fBm},
\end{equation}
\begin{equation}\label{stationarity}
\mbox{(stationarity)}\quad\mbox{For any }h\in\R,\,B_{\cdot+h}-B_h\mbox{ is a fBm}.
\end{equation}
Notice that, for Malliavin calculus purposes, we shall assume in the sequel that $B$ is
defined a complete probability space $(\Omega,\cf,P)$, and
that $\cf=\si(B_s;\, s\in\R)$.
Observe also that we work with a fBm indexed by $\R$
for sake of simplicity, since this  allows some more elegant calculations for the
definition of the delayed L\'evy area.

\subsection{Malliavin calculus with respect to fBm}

Let us give a few facts about the Gaussian structure of fractional Brownian motion
and its Malliavin derivative process, following Section 2 of \cite{nual-cours}.
Let $\EE$ be the set of step-functions on $\R$ with values in $\R^{d}$.
Consider the Hilbert space $\HH$ defined as the closure of $\EE$ with respect
to the scalar product induced by
\begin{eqnarray*}
&& \left\lpa
(\1_{[t_{1},t^{1}]}, \ldots, \1_{[t_{d},t^{d}]})
,
(\1_{[s_{1},s^{1}]}, \ldots, \1_{[s_{d},s^{d}]})
\right\rpa_{\HH} \\
&&\hskip2cm = \sum_{i=1}^{d}
\big(
R_H(t^{i},s^{i})-R_H(t^{i},s_i)-R_H(t_i,s^i)+R_H(t_i,s_i)
\big),
\end{eqnarray*}
for any $-\infty<s_{i}<s^i<+\infty$ and $-\infty<t_{i}<t^i<+\infty$,
and where $R_{H}(t,s)$ is given by (\ref{rhts}).
The mapping
$$ (\1_{[t_{1},t^{1}]}, \ldots, \1_{[t_{d},t^{d}]}) \mapsto \sum_{i=1}^{d}\big(
B_{t^{i}}^{i}  - B_{t_i}^i\big) $$can be
extended to an isometry between $\HH$ and the Gaussian space $H_{1}(B)$ associated with
$B=(B^{1}, \ldots , B^{d})$. We denote this isometry by $\varphi \mapsto B(\varphi)$.
Let $\mathcal{S}$ be the set of smooth cylindrical random variables of the form
$$F = f(B(\varphi_{1}), \ldots, B(\varphi_{k})), \qquad \varphi_{i} \in \HH, \quad
i=1, \ldots, k,$$ where $f\in C^{\infty}(\R^{d,k},\R)$ is bounded with bounded derivatives. The derivative operator $D$ of a smooth cylindrical random variable of the above form is defined as the $\HH$-valued random variable
$$ DF= \sum_{i=1}^{k} \frac{\partial f}{\partial x_{i}} (B(\varphi_{1}), \ldots, B(\varphi_{k})) \varphi_{i}.$$
This operator is closable from $L^{p}(\Omega)$ into $L^{p}(\Omega; \HH)$.
 As usual, $\sk^{1,2}$ denotes
the closure of the set of smooth random variables with respect to the norm
$$\| F\|_{1,2}^2 \; = \; { \rm E} |F|^2  + { \rm E} \| DF\|_{\HH}^2 .$$
In particular, if $D^{i}F$ denotes the Malliavin derivative
of  $F\in\sk^{1,2}$ with respect to $B^i$,
we have   $D^{i} B^{j}_{t} = \delta_{i,j}{\bf 1}_{[0,t]}$ for $i,j=1, \ldots, d$.

The divergence operator $I$ is the adjoint of the derivative operator. If a random variable $u \in L^{2}(\Omega; \HH)$
belongs to $\textrm{dom}(I)$, the domain of the divergence operator, then  $I(u)$ is defined by the
duality relationship
\begin{align} \label{sk_dual}
 { \rm E} (F \,I(u))= { \rm E} \langle D F, u \rangle_{\HH}, \end{align} for every $F \in \sk^{1,2}$.
Moreover, let us recall two useful properties verified by $D$ and $I$:
\begin{itemize}
\item If $u \in \textrm{dom}(I)$ and $F \in \sk^{1,2}$ such that $Fu \in L^{2}(\Omega; \HH)$, then we have the following integration by parts formula:
\begin{equation}\label{int_part}    I(Fu)=FI(u)- \langle DF, u \rangle_{\HH}. \end{equation}
\item If $u$ verifies $ {\rm E}  \| u \|_{ \HH}^2 +
{\rm E} \| Du \|_{ \HH \otimes \HH}^2 < \infty$, $D_r u \in  \textrm{dom}(I)$ for
all $r \in \R$ and
$  (I(D_r u))_{r \in \R}$ is an element of $L^{2}(\Omega; \HH)$,  then
\begin{equation}\label{mal-sko}
D_rI(u)= u_r + I(D_r u).
\end{equation}
\end{itemize}

\subsection{Delayed L\'evy area and fractional Brownian motion}

The stochastic integrals we shall use in order to define our delayed L\'evy area are defined,
in a natural way, by Russo-Vallois symmetric approximations, that is, for a given
process $\phi$:
$$
\int_s^t \phi_w \, d^\circ B^i_w \,=L^2-\lim_{\ep\rightarrow 0} \,(2\ep)^{-1}
\int_s^t \phi_w \, \big(B^i_{w+\ep}-B^i_{w-\ep}\big)dw,
$$
provided the limit exists. This pathwise type notion of integral can then
be related to some stochastic analysis criterions in the following way
(for a proof, see \cite{ALN}):
\begin{theorem}\label{thm_help}
Fix $t\ge 0$ and let $\phi\in\mathbb{D}^{1,2}({\mathcal H})$ be a process such that
$$
{\rm Tr}_{[0,t]}D^{B^i}\phi:=L^2-\lim_{\ep\rightarrow 0}(2\ep)^{-1}\int_0^{t}
\langle D^{B^i}\phi_u,{\bf 1}_{[u-\ep,u+\ep]}\rangle_{\mathcal H} du
$$
exists, and such that, setting $\ell(t,u)\triangleq u^{2H-1}+(t-u)^{2H-1}$
for $0\le u < t$,
$$
\int_0^{t} \!\!\!E[\phi_u^2]\,\ell(t,u) du
+\int_{[0,t]^2} \!\!\!\!\!\!E\lc (D^{B^i}_r\phi_u)^2\rc \,\ell(t,u) dudr
<\infty.
$$
Then $\int_0^t \phi d^\circ B^i$ exists, and verifies
$$
\int_0^t \phi d^\circ B^i=I^{B^i}(\phi\1_{[0,t]})+{\rm Tr}_{[0,t]}D^{B^i}\phi.
$$
\end{theorem}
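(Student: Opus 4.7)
The plan is to connect the Russo–Vallois symmetric integral to the Skorohod integral via the integration-by-parts formula \eqref{int_part}, exploiting the fact that the increment $B^i_{w+\e}-B^i_{w-\e}$ can itself be written as a Skorohod integral $I^{B^i}(\1_{[w-\e,w+\e]})$. The heuristic identity behind the theorem is the standard decomposition of the symmetric integral into a divergence piece plus a Malliavin trace.

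First, I would write, for each fixed $w\in[0,t]$ and $\e>0$ small,
\[
\phi_w\,\bigl(B^i_{w+\e}-B^i_{w-\e}\bigr)
= I^{B^i}\!\bigl(\phi_w\,\1_{[w-\e,w+\e]}\bigr)
+ \bigl\langle D^{B^i}\phi_w,\,\1_{[w-\e,w+\e]}\bigr\rangle_{\HH},
\]
using \eqref{int_part}. Dividing by $2\e$ and integrating in $w$ over $[0,t]$, the second term produces, by definition, an approximation of $\mathrm{Tr}_{[0,t]}D^{B^i}\phi$, and converges to it in $L^2$ by hypothesis. For the first term, a stochastic Fubini (legitimate because the integrand is jointly measurable and square-integrable) rewrites it as
\[
I^{B^i}\!\left( \Phi^\e \right),
\qquad
\Phi^\e_u \;=\; \frac{1}{2\e}\int_{(u-\e)\vee 0}^{(u+\e)\wedge t}\phi_w\,dw,
\]
viewed as an element of $L^2(\Omega;\HH)$.

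The main step is then to show that $\Phi^\e\to \phi\1_{[0,t]}$ in $\sk^{1,2}(\HH)$ as $\e\to 0$, so that continuity of $I^{B^i}$ on $\sk^{1,2}(\HH)$ gives $I^{B^i}(\Phi^\e)\to I^{B^i}(\phi\1_{[0,t]})$ in $L^2$. Here the structure of the Hilbert space $\HH$ for fBm becomes crucial: inner products of indicator functions are governed by the covariance $R_H$, whose partial derivatives produce exactly the weights $u^{2H-1}$ and $(t-u)^{2H-1}$ appearing in $\ell(t,u)$. One verifies, via standard $\HH$-norm estimates, that
\[
\E\,\|\Phi^\e - \phi\1_{[0,t]}\|_\HH^2
\;\le\; C\int_0^t \E[\phi_u^2]\,\ell(t,u)\,du\cdot o_\e(1),
\]
and similarly for the Malliavin derivative term with weight $\E[(D^{B^i}_r\phi_u)^2]\,\ell(t,u)$; the assumed integrability hypothesis lets dominated convergence close the argument.

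The hard part is precisely this last convergence in $\sk^{1,2}(\HH)$: one must justify that the singular weights $\ell(t,u)$ really do control $\HH$-norms of the mollifying family $\Phi^\e$, and one must handle the boundary effects at $u=0$ and $u=t$ carefully, because that is where $\ell(t,u)$ blows up (for $H<1/2$) and where $\1_{[w-\e,w+\e]}$ sticks out of $[0,t]$. Once these two technicalities are settled, combining the two limits yields
\[
\int_0^t\phi\,d^\circ B^i
\;=\;
I^{B^i}\!\bigl(\phi\1_{[0,t]}\bigr)+\mathrm{Tr}_{[0,t]}D^{B^i}\phi,
\]
as claimed.
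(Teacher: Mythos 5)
The paper does not actually prove this theorem: it is quoted from Al\`os--Le\'on--Nualart \cite{ALN} (``for a proof, see \cite{ALN}''), and your outline reproduces precisely the argument used there --- apply the integration-by-parts formula (\ref{int_part}) with $F=\phi_w$ and $u=\1_{[w-\e,w+\e]}$, so that $I^{B^i}(u)=B^i_{w+\e}-B^i_{w-\e}$, integrate in $w$, recognize the second term as the defining approximation of ${\rm Tr}_{[0,t]}D^{B^i}\phi$, and pass to the limit in the divergence term after a stochastic Fubini. So the strategy is the correct and standard one. The only place where your sketch stops short of a proof is the convergence $I^{B^i}(\Phi^\e)\to I^{B^i}(\phi\1_{[0,t]})$ in $L^2(\Omega)$. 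For $H<\frac12$ the norm of $\HH$ is $\|\varphi\|_{\HH}=\|K_H^*\varphi\|_{L^2}$ for a fractional-derivative-type operator $K_H^*$, multiplication by $\1_{[0,t]}$ is \emph{not} bounded on $\HH$, and the weights $\ell(t,u)$ in the hypothesis control only the ``diagonal'' part $K_H(t,s)\varphi(s)$ of $K_H^*\varphi$; one must combine this with the standing assumption $\phi\in\sk^{1,2}(\HH)$ (which supplies the remaining, smoothness-type part of the norm) both to see that $\phi\1_{[0,t]}$ belongs to $\mathrm{dom}(I^{B^i})$ at all and to get $\Phi^\e\to\phi\1_{[0,t]}$ in $\sk^{1,2}(\HH)$. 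You correctly identify this as ``the hard part'' and correctly locate where the weights come from, but the estimate is asserted rather than carried out --- and it is where essentially all of the $H<\frac12$ difficulty of \cite{ALN} is concentrated.
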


With these notations in mind, the main result of this section is the following:
\begin{proposition}\label{prop:hyp-fbm}
Let $B$ be a $d$-dimensional fractional Brownian motion
and suppose $H > \frac{1}{3}$. For $v\in[-r,0]$,
let $\bd(v)$ be the delayed L\'evy area given by:
$$
\bdst(v)=\ist dB_u \otimes \int_{s+v}^{u+v} dB_r,
\ \mbox{i. e.}\
\bdst(v)(i,j)=\ist dB_u^i  \int_{s+v}^{u+v} dB_r^j,
\quad i,j\in\{1,\ldots,d  \},
$$
Then almost all sample paths of  $B$ satisfy
Hypothesis \ref{hyp:x}.
\end{proposition}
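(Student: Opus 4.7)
My plan is to verify the two parts of Hypothesis \ref{hyp:x} separately: the Hölder regularity of $B$ itself (which is standard for fBm: for any $\gamma\in(1/3,H)$, almost every sample path lies in $\mathcal{C}_1^\gamma$), and then the existence, $2\gamma$-Hölder regularity, and Chen-type relation for the delayed area $\mathbf{B}^2(v)$ for $v\in\{-r_k,\ldots,-r_0\}$. I will handle the delayed area coordinate by coordinate, writing $\mathbf{B}^2_{st}(v)(i,j)=\int_s^t(B^j_{u+v}-B^j_{s+v})\,d^\circ B^i_u$, and applying Theorem~\ref{thm_help} with $\phi_u=B^j_{u+v}-B^j_{s+v}$ to represent each component as a Skorohod integral plus a trace term.

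In the off-diagonal case $i\neq j$, independence gives $D^{B^i}\phi\equiv 0$, so the trace vanishes and $\mathbf{B}^2_{st}(v)(i,j)=I^{B^i}(\phi\,\mathbf{1}_{[s,t]})$ is an honest Skorohod integral against $B^i$; the moment integrability conditions of Theorem~\ref{thm_help} are trivially satisfied since the covariance of $\phi$ grows like $|u-u'|^{2H}$ and $H>1/3$. In the diagonal case $i=j$ with $v<0$, the intervals $[s+v,u+v]$ and $[s,t]$ are disjoint (for $v<-r_k$ small enough, which we can reduce to by translation if needed; otherwise we can still use that $D^{B^i}\phi_u=\mathbf{1}_{[s+v,u+v]}$), and the trace ${\rm Tr}_{[s,t]}D^{B^i}\phi=\frac{1}{2}\int_s^t[|u+v-(u-\varepsilon)|^{2H}+\cdots]du$ can be controlled using that the singularity $\ell(t,u)\sim u^{2H-1}$ is integrable since $H>1/3$; for $v=0$, $i=j$ the integral reduces by the Itô–Stratonovich-type formula for symmetric Russo–Vallois integrals (valid for $H>1/6$) to $\frac12(B^i_t-B^i_s)^2$, which trivially lies in $\mathcal{C}_2^{2\gamma}$.

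The Chen relation (\ref{cdf}) is a purely algebraic consequence of the additivity of the Russo–Vallois integral over the integration interval: writing
\begin{equation*}
\mathbf{B}^2_{st}(v)(i,j)-\mathbf{B}^2_{su}(v)(i,j)-\mathbf{B}^2_{ut}(v)(i,j)=\int_u^t\bigl[(B^j_{w+v}-B^j_{s+v})-(B^j_{w+v}-B^j_{u+v})\bigr]d^\circ B^i_w,
\end{equation*}
the bracket is the \emph{constant} $B^j_{u+v}-B^j_{s+v}$, so the right-hand side equals $(\delta B^{j,v})_{su}(\delta B^i)_{ut}$ as required. This step is essentially symbolic once the integral has been shown to exist.

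The main technical obstacle is the $2\gamma$-Hölder regularity of $\mathbf{B}^2(v)$ as an element of $\mathcal{C}_2^{2\gamma}([0,T];\mathbb{R}^{d,d})$. I would establish it by a Kolmogorov-type argument for two-parameter processes: show that for every $p\geq 2$ there exists $C_p<\infty$ with $\mathbb{E}|\mathbf{B}^2_{st}(v)(i,j)|^{p}\leq C_p|t-s|^{2Hp}$, uniformly in $v\in[-r_k,0]$. For $i\neq j$ this follows from the Meyer-type inequalities for Skorohod integrals applied to the isonormal representation above, using the stationarity~(\ref{stationarity}) and scaling~(\ref{scaling}) of $B$ to reduce to estimates of $\|\phi\mathbf{1}_{[s,t]}\|_{\mathbb{D}^{1,2}(\mathcal{H})}$ which scale like $|t-s|^{2H}$. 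For $i=j$, $v<0$, the same bound holds with an additional deterministic trace contribution that is easily seen to be of order $|t-s|^{2H}$ since the kernel $\ell(t,u)$ is integrable when $H>1/3$. Taking $p$ large enough and applying the Garsia–Rodemich–Rumsey / Kolmogorov criterion for 2-increments yields $\mathbf{B}^2(v)\in\mathcal{C}_2^{2\gamma}$ almost surely for every $\gamma<H$, completing the verification of Hypothesis~\ref{hyp:x}.
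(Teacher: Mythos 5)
Your proposal follows essentially the same route as the paper's proof: decompose each component of the delayed area via Theorem \ref{thm_help} into a Skorohod integral plus a trace term, bound the trace explicitly in the diagonal case $i=j$, $v<0$, estimate the second (and then all higher-order) moments of the divergence part by the stationarity and scaling properties of fBm, obtain the Chen relation $\delta {\bf B}^{\bf 2}(v)=\delta B^{v}\otimes\delta B$ from the substitution/additivity property of the Russo--Vallois integral, and conclude the $2\gamma$-H\"older regularity from the moment bounds combined with a Garsia--Rodemich--Rumsey argument (for which the pathwise control of $\delta {\bf B}^{\bf 2}(v)$ coming from the Chen relation is also needed). The only points worth tightening are cosmetic: the trace term is of order $|v|^{2H-1}\,|t-s|$, so its control has nothing to do with integrability of $\ell(t,u)$ at $H>\frac13$, and the resulting constant is not uniform as $v\to 0^{-}$ --- which is harmless because Hypothesis \ref{hyp:x} only involves the finitely many delays $-r_k,\ldots,-r_0$, with $v=0$ treated separately.
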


\noindent{\it Proof}.
When $H=\frac12$, the desired conclusion is easily obtained, because
the Russo-Vallois symmetric integral coincides with the Stratonovich
integral. Moreover, for $H>\frac{1}{2}$ the  Russo-Vallois symmetric
integral coincides with the Young integral, which is  well defined in
this case, and the assertion still follows easily from the properties of
Young integrals.

Now, fix $\frac13<H< \frac12$.
It is a classical fact
that $B\in\cac_1^\ga([0,T];\R^d)$ for any $\frac13<\ga<H$.
Due to the stationarity property (\ref{stationarity})
we will work without loss of generality on the interval $[0,t-s]$
instead of $[s,t]$ in the sequel.

\vspace{0.3cm}

\noindent
{\it 1) Case $i=j$}.
When $v=0$, it is easily checked that
$$
E|\bdst(0)(i,i)|^2=\frac14\,E|B_t-B_s|^4=\frac34|t-s|^{4H}.
$$
Let us now consider the case where $v<0$.
For $\phi=(B^{i}_{\cdot+v}-B^{i}_{v}) \1_{[0,t-s]}(\cdot)$,
the conditions of Theorem \ref{thm_help} are easily verified, hence
$\int_{0}^{t-s} \phi_u d^\circ B^{i}_u$ exists.
Notice moreover that we have
$D^{B^i}_{r}\phi_{u} = {\bf 1}_{[v,u+ v]}(r){\bf 1}_{[0,t-s]}(u)$
and, for $u\in[0,t-s]$ and $\e \in [0, -v]$ (which is always the case,
for a {\it fixed} $v<0$ and $\e$ small enough) it holds
\begin{align*}
&\langle   {\bf 1}_{[v,u+v]}
, {\bf 1}_{[ u-\ep, u+\ep]}
\rangle_{\HH} =  \frac{1}{2} \left( | v+ \ep |^{2H} -  | v - \ep
  |^{2H}  + | v - u- \ep |^{2H}  -  | v - u + \ep |^{2H}  \right)\\
&= \frac{1}{2} \left( (-v- \ep )^{2H} -  ( - v + \ep
  )^{2H}  + (- v + u+ \ep )^{2H}  -  (- v + u - \ep )^{2H}  \right).
\end{align*}
Thus, we obtain
$$
{\rm Tr}_{[0,t-s]}D^{B^i}\phi=-H(-v)^{2H-1} (t-s)+\frac{1}{2} \left( (t-s-v)^{2H}
   - (-v)^{2H} \right).
$$
For $x\ge 0$, it is well-known that $0\le ((-v)+x)^{2H}-(-v)^{2H}\le 2H(-v)^{2H-1}x$. Applying this inequality to the
second term of the right hand side of ${\rm Tr}_{[0,t-s]}D^{B^i}\phi$, we get
\begin{eqnarray}{\label{taylor_rem}}
\left| {\rm Tr}_{[0,t-s]}D^{B^i}\phi \right| \leq H(-v)^{2H-1} (t-s).
\end{eqnarray}
On the other hand, we have by (\ref{mal-sko})
\begin{equation}\label{ast}
D_{r}^{B_{i}} I^{B_{i}}(\phi) = \phi_{r} + I^{B_{i}}(D_{r}^{B_{i}}\phi)
=\big(\phi_r+ I^{B_{i}}({\bf 1}_{[r-v,+\infty)\cap [0,t-s]})\big)\1_{[0,t-s]}(r).
\end{equation}

When $-v\ge t-s$, then $[r-v,+\infty)\cap [0,t-s]=\emptyset$ for any $r\in[0,t-s]$.
By using (\ref{sk_dual}) we deduce
\begin{align}\label{eq1bis}
\E |I^{B^{i}}(\phi)|^{2} &= \E \| \phi \|_{\HH}^{2} =
\E \| B^i_{\cdot+v}-B^i_v \|_{\HH([0,t-s])}^{2} = \E \| B^i \|_{\HH([0,t-s])}^{2}  \nonumber \\ &=
(t-s)^{4H}\E\|B^i\|_{\HH([0,1])}^2,
\end{align} where
the two last equalities are due to  the stationarity (\ref{stationarity}) and scaling (\ref{scaling}) properties of fractional Brownian motion.

When $-v< t-s$, then
\begin{equation}\label{vla}
I^{B_{i}}({\bf 1}_{[r-v,+\infty)\cap [0,t-s]})=(B^i_{t-s}-B^i_{r-v}){\bf 1}_{[0,t-s+v]}(r).
\end{equation}
We deduce
\begin{eqnarray}
&&\E |I^{B^{i}}(\phi)|^{2} \nonumber\\
&=&\E \langle DI^{B^{i}}(\phi) ,\phi\rangle_\HH\quad\mbox{by (\ref{sk_dual})} \nonumber\\
&=&\E \| \phi \|_{\HH([0,t-s])}^{2}
+E \langle
I^{B^{i}}(
{\bf 1}_{[r-v,\infty)\cap[0,t-s]}),\phi\rangle_{\HH([0,t-s])}\quad\mbox{by (\ref{ast})}\nonumber\\
& =& \E \| \phi \|_{\HH([0,t-s])}^{2}
+E \langle (B^i_{t-s}-B^i_{\cdot-v}){\bf 1}_{[0,t-s+v]},\phi\rangle_{\HH([0,t-s])}\quad\mbox{by (\ref{vla})}\nonumber\\
& \le&
\E \| \phi \|_{\HH([0,t-s])}^{2}
+\E\left( \| (B^i_{t-s}-B^i_{\cdot-v}){\bf 1}_{[0,t-s+v]}\|_{\HH([0,t-s])} \,
 \| \phi \|_{\HH([0,t-s])}\right)\nonumber\\
&  \le&
\frac32 \E \| \phi \|_{\HH([0,t-s])}^{2}
+\frac12
\E \| (B^i_{t-s}-B^i_{\cdot-v}){\bf 1}_{[0,t-s+v]}\|_{\HH([0,t-s])}^2\quad \mbox{because
$ab\le\frac12(a^2+b^2)$}\nonumber\\
& =&
\frac32(t-s)^{4H}\E\|B^i\|_{\HH([0,1])}^2
+\frac12
\E \| (B^i_{t-s+v}-B^i){\bf 1}_{[0,t-s+v]}\|_{\HH([0,t-s])}^2\quad\mbox{by (\ref{scaling}) and (\ref{stationarity})}
\nonumber\\
& =&
\frac32(t-s)^{4H}\E\|B^i\|_{\HH([0,1])}^2
+\frac12(t-s+v)^{2H}
\E \| (B^i_{t-s+v}-B^i_{(t+s-v)\cdot})\|_{\HH([0,1])}^2
\nonumber\\
& =&
\frac32(t-s)^{4H}\E\|B^i\|_{\HH([0,1])}^2
+\frac12(t-s+v)^{4H}
\E \| (B^i_{1}-B^i)\|_{\HH([0,1])}^2\quad\mbox{by (\ref{scaling})}
\nonumber\\
& \leq &\frac12(t-s)^{4H} \left( 3 \E\|B^i\|_{\HH([0,1])}^2 + \E \| (B^i_{1}-B^i)\|_{\HH([0,1])}^2 \right) .\label{eq2bis}
\end{eqnarray}
Finally, we can summarize (\ref{eq1bis}) and (\ref{eq2bis}) in
$$
\E |I^{B^{i}}(\phi)|^{2}
\leq c_H |t-s|^{4H}, $$
with a constant $c_H>0$, in particular independent of $v$.
Putting together this last estimate with inequality (\ref{taylor_rem}),
we end up with:
$$
{\rm E} |\bdst(v)(i,i)|^2
\le c_H (1+ |v|^{2H-1}) |t-s|^{4H},
$$
for any $v\in[-r,0]$.

\vspace{0.3cm}

\noindent
{\it 2) Case where $i\neq j$}. By stationarity
  (\ref{stationarity}), we have for any $v\in[-r,0]$ that
$$
\big(B^j_{u+v}-B^j_{v},B^i_u\big)_{u\in[0,t-s]}
\,{\stackrel{{\cal L}}{=}}\,
 \big(B^j_{u},B^i_u\big)_{u\in[0,t-s]}.
$$
Thus, the delayed L\'evy area
$\bdot(v)(i,j)=\int_0^{t-s} (B^j_{u+v}-B^j_{v})d^\circ B_u^i$ for $v<0$ behaves
as in the case where $v=0$. But it is a classical result that $\bdot(0)$ is well-defined for
$H>1/3$ (see, e.g., \cite{PT2}). Moreover,
it follows again  by the
stationarity (\ref{stationarity}) and the scaling (\ref{scaling})
properties that
$$
{\rm E} |\bdot(v)(i,j)|^2
=
{\rm E} |\bdot(0)(i,j)|^2
= |t-s|^{4H} {\rm E} |{\bf B}^{\bf 2}_{01}(0)(i,j)|^2
\le c_H |t-s|^{4H}.
$$
Immediately, we deduce that
$$
{\rm E} |\bdst(v)(i,j)|^2
\le c_H |t-s|^{4H}
$$
for any $v\in[-r,0]$.
\vspace{0.3cm}

Both in the cases $i=j$ and $i\neq j$,
the substitution formula
for Russo-Vallois integrals easily
yields  that $\der\bd(v)=\der B^v\otimes\der B$. Furthermore,
since $\bd(v)$ is a process belonging to the
second chaos of the fractional Brownian motion $B$, on which all
$L^p$ norms are equivalent for $p>1$, we get that
\begin{equation}\label{ineq:norm-xd}
{\rm E} |\bdst(v)(i,j)|^p
\le c_p |t-s|^{2pH}
\end{equation}
for $i \neq j$ and
\begin{equation}\label{ineq:norm-xd2}
{\rm E}|I^{B_{i}}(\phi)|^p \le c_p |t-s|^{2pH}
\end{equation}
when $i=j$.
In order to conclude that $\bd(v)\in\cac_2^{2\ga}(\R^{d\times d})$ for any
$\frac13<\ga<H$ and $v\in[-r,0)$, let us recall the following inequality from \cite{Gu}:
let $g\in \cac_2(V)$ for a given Banach space $V$; then,
for any $\ka>0$ and $p\ge 1$ we have
\begin{equation}\label{lem:garsia}
\| g\|_{\ka}\le c \lp U_{\ka+2/p;p}(g) + \| \der g\|_{\ka}\rp
\quad\mbox{ with }\quad
U_{\ga;p}(g)=
\lp \int_0^T\int_0^T \frac{|g_{st}|^p}{|t-s|^{\ga p}}dsdt \rp^{1/p}.
\end{equation}
By plugging inequality (\ref{ineq:norm-xd})-(\ref{ineq:norm-xd2}) into (\ref{lem:garsia}),
by recalling that $\der\bd(v)=\der B^v\otimes\der B$ and
(\ref{taylor_rem}) hold, we obtain
 that $\bd(v)(i,j)\in\cac_2^{2\ga}(\R^{d\times d})$ for any $\frac13<\ga<H$ and
 $i,j=1, \ldots, d$.

\fin


\end{document}